\def\dom{\mathop{\mathrm{Dom}}\nolimits}
\def\im{\mathop{\mathrm{Im}}\nolimits}
\def\id{\mathrm{id}}
\def\PT{\mathcal{PT}}
\def\T{\mathcal{T}}
\def\I{\mathcal{I}}
\def\Sym{\mathcal{S}}
\newcommand{\PEnd}{\mathrm{PEnd}}
\newcommand{\PwEnd}{\mathrm{PwEnd}}
\newcommand{\PsEnd}{\mathrm{PsEnd}}
\newcommand{\PswEnd}{\mathrm{PswEnd}}
\newcommand{\PAut}{\mathrm{PAut}}
\newcommand{\IEnd}{\mathrm{IEnd}}
\newcommand{\transf}[1]{\left(\begin{smallmatrix}#1\end{smallmatrix}\right)}
\newtheorem{theorem}{Theorem}[section]
\newtheorem{proposition}[theorem]{Proposition}
\newtheorem{lemma}[theorem]{Lemma}
\tikzset{
  vertex/.style={
    circle,
    minimum size=2mm,
    fill,
    inner sep=0,
    outer sep=0,
  },
  edge/.style={
    line width=.2mm,
  }
}
\newcommand{\lastpage}{\addresss}
\newcommand{\addresss}{\small {\sf

\noindent{\sc Ilinka Dimitrova},
Department of Mathematics,
Faculty of Mathematics and Natural Science,
South-West University "Neofit Rilski",
2700 Blagoevgrad,
Bulgaria;
e-mail: ilinka\_dimitrova@swu.bg.

\medskip

\noindent{\sc V\'\i tor H. Fernandes},
Center for Mathematics and Applications (NOVA Math)
and Department of Mathematics,
Faculdade de Ci\^encias e Tecnologia,
Universidade Nova de Lisboa,
Monte da Caparica,
2829-516 Caparica,
Portugal;
e-mail: vhf@fct.unl.pt.

\medskip

\noindent{\sc J\"{o}rg Koppitz},
Institute of Mathematics and Informatics,
Bulgarian Academy of Sciences,
1113 Sofia,
Bulgaria;
e-mail: koppitz@math.bas.bg.
}}
\title{Presentations for monoids of partial endomorphisms of a star graph}
\author{Ilinka Dimitrova\footnote{This work was supported by the project BG05M2OP001-2.016-0018 ``MODERN-A: Modernization in partnership through digitalization of the Academic ecosystem".},\, V\'\i tor H. Fernandes$^{*,}$\footnote{This work is funded by national funds through the FCT - Funda\c c\~ao para a Ci\^encia e a Tecnologia, I.P., under the scope of the (Center for Mathematics and Applications) projects UIDB/00297/2020 (https://doi.org/10.54499/UIDB/00297/2020) and UIDP/00297/2020 (https://doi.org/10.54499/UIDP/00297/2020).}~ and  J\"org Koppitz
}
\begin{document}

\maketitle

\begin{abstract}
In this paper, we consider the monoids of all partial endomorphisms, of all partial weak endomorphisms,
of all injective partial endomorphisms,
of all partial strong endomorphisms and of all partial strong weak endomorphisms
of a star graph with a finite number of vertices.
Our main objective is to exhibit a presentation for each of them.
\end{abstract}

\medskip

\noindent{\small 2020 \em Mathematics subject classification: \em 20M20, 20M05, 05C12, 05C25.}

\noindent{\small\em Keywords: \em transformations, partial endomorphisms, star graphs, presentations.}

\section{Introduction}\label{presection}

Let $X$ be a set and denote by $X^*$ the free monoid generated by
$X$. Often, in this context, the set $X$ is called an \textit{alphabet} and its elements are called \textit{words}.
A \textit{monoid presentation} is an ordered pair $\langle X\mid
R\rangle$, where $X$ is an alphabet and $R$ is a subset of
$X^*\times X^*$. An element $(u,v)$ of $X^*\times X^*$ is called a
\textit{relation} of $X^*$ and it is usually represented by $u=v$.
A monoid $M$ is said to be \textit{defined by a presentation} $\langle X\mid R\rangle$ if $M$ is
isomorphic to $X^*/{\sim_R}$, where $\sim_R$ denotes the congruence on $A^*$ generated by $R$,
i.e. $\sim_R$ is the smallest
congruence on $X^*$ containing $R$.
Suppose that $X$ is a generating set of a monoid $M$ and let $u=v$ be a relation of $X^*$.
We say that a relation $u=v$ of $X^*$ is \textit{satisfied} by $X$ if $u=v$ is an equality in $M$.
For more details, see \cite{Lallement:1979} or \cite{Ruskuc:1995}.

A well-known direct method to obtain a presentation for a monoid, that we might consider folklore,
is given by the following result; see, for example, \cite[Proposition 1.2.3]{Ruskuc:1995}.

\begin{proposition}\label{provingpresentation}
Let $M$ be a monoid generated by a set $X$.
Then, $\langle X\mid R\rangle$ is a presentation for $M$ if and only
if the following two conditions are satisfied:
\begin{enumerate}
\item
The generating set $X$ of $M$ satisfies all the relations from $R$;
\item
If $w_1,w_2\in A^*$ are any two words such that
the generating set $X$ of $M$ satisfies the relation $w_1=w_2$, then $w_1\sim_R w_2$.
\end{enumerate}
\end{proposition}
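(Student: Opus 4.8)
The plan is to realise both the monoid $M$ and the quotient $X^*/{\sim_R}$ through a single canonical map, and then to reduce the two conditions to inclusions of congruences in opposite directions. Since $X$ generates $M$, the inclusion $X \hookrightarrow M$ extends uniquely to a surjective monoid homomorphism $\varphi \colon X^* \to M$ sending each word to the element of $M$ it represents. The key observation, which I would record first, is that a relation $w_1 = w_2$ is satisfied by $X$ precisely when $\varphi(w_1) = \varphi(w_2)$; in other words, the set of relations of $X^*$ satisfied by $X$ is exactly the congruence $\ker\varphi = \{(u,v) \in X^* \times X^* : \varphi(u) = \varphi(v)\}$. By the First Isomorphism Theorem, $\varphi$ induces an isomorphism $X^*/{\ker\varphi} \cong M$ fixing the generators, so $\langle X \mid R\rangle$ is a presentation for $M$ with respect to the generating set $X$ if and only if $\sim_R\, = \ker\varphi$.

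With this dictionary in place, I would translate each condition into an inclusion between $\sim_R$ and $\ker\varphi$. Condition~(1) asserts exactly that $R \subseteq \ker\varphi$; since $\ker\varphi$ is a congruence on $X^*$ containing $R$, and $\sim_R$ is by definition the smallest such congruence, condition~(1) is equivalent to $\sim_R\, \subseteq \ker\varphi$. Condition~(2) says that every pair $(w_1,w_2)$ lying in $\ker\varphi$ already lies in $\sim_R$, that is, $\ker\varphi \subseteq\, \sim_R$. Hence conditions~(1) and~(2) hold together exactly when $\sim_R\, = \ker\varphi$, which by the previous paragraph is equivalent to $\langle X \mid R\rangle$ being a presentation for $M$. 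Reading these equivalences in both directions establishes the two implications of the statement simultaneously.

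The only genuinely delicate step is passing from $R \subseteq \ker\varphi$ to $\sim_R\, \subseteq \ker\varphi$, where I rely on the minimality of $\sim_R$ among congruences containing $R$; everything else is a direct unwinding of definitions. One should also take care, when identifying ``being a presentation for $M$'' with the equality $\sim_R\, = \ker\varphi$, to use the natural homomorphism $\varphi$ rather than an arbitrary abstract isomorphism $M \cong X^*/{\sim_R}$: it is precisely the map fixing the generators that renders the two conditions meaningful, and the First Isomorphism Theorem then guarantees that this natural map is an isomorphism exactly when the two congruences coincide.
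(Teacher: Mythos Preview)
Your proof is correct and is the standard argument for this folklore result. Note that the paper itself does not prove this proposition: it is quoted as well known, with a reference to Ru\v{s}kuc's thesis, so there is no paper proof to compare against. Your approach via the canonical surjection $\varphi\colon X^*\to M$, identifying conditions~(1) and~(2) with the inclusions $\sim_R\,\subseteq\ker\varphi$ and $\ker\varphi\subseteq\,\sim_R$ respectively, is exactly the expected one, and your closing remark about reading ``presentation'' through the natural map that fixes the generators is a worthwhile clarification of a genuine ambiguity in the paper's phrasing.
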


For finite monoids, a standard method to find a presentation
is described by the following result, adapted for the monoid case from
\cite[Proposition 3.2.2]{Ruskuc:1995}.

\begin{theorem}[Guess and Prove method] \label{ruskuc}
Let $M$ be a finite monoid, let $X$ be a generating set for $M$,
let $R\subseteq X^*\times X^*$ be a set of relations and let
$W\subseteq X^*$. Assume that the following conditions are
satisfied:
\begin{enumerate}
\item The generating set $X$ of $M$ satisfies all the relations from $R$;
\item For each word $w\in X^*$, there exists a word $w'\in W$ such that $w\sim_R w'$;
\item $|W|\le|M|$.
\end{enumerate}
Then, $M$ is defined by the presentation $\langle X\mid R\rangle$.
\end{theorem}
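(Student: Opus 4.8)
The plan is to deduce the theorem directly from Proposition \ref{provingpresentation}, reducing everything to a short counting comparison. Observe that condition 1 of the theorem is literally condition 1 of Proposition \ref{provingpresentation}, so the entire task is to establish condition 2 of that proposition: that any two words which are equal in $M$ are already $\sim_R$-equivalent.

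First I would set up the canonical map. Since $X$ generates $M$, the inclusion of $X$ into $M$ extends to a surjective monoid homomorphism $\varphi\colon X^*\to M$. Condition 1 says that every relation $(u,v)\in R$ satisfies $\varphi(u)=\varphi(v)$, i.e. $R$ is contained in the kernel congruence of $\varphi$; since $\sim_R$ is by definition the smallest congruence on $X^*$ containing $R$, it follows that $\sim_R\,\subseteq\,\ker\varphi$. Hence $\varphi$ factors through the quotient, yielding a well-defined surjective homomorphism $\bar\varphi\colon X^*/{\sim_R}\to M$ given by $[w]\mapsto\varphi(w)$.

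The core step is then a cardinality squeeze. By condition 2, every word $w\in X^*$ is $\sim_R$-equivalent to some word of $W$, so each $\sim_R$-class has a representative lying in $W$; consequently $|X^*/{\sim_R}|\le|W|$. Combining this with condition 3 gives $|X^*/{\sim_R}|\le|W|\le|M|$. On the other hand, the surjectivity of $\bar\varphi$ forces $|X^*/{\sim_R}|\ge|M|$. As $M$ is finite, these inequalities collapse to equalities, and in particular $|X^*/{\sim_R}|=|M|$.

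A surjection between finite sets of the same cardinality is a bijection, so $\bar\varphi$ is injective. Unwinding this: if $w_1,w_2\in X^*$ satisfy $w_1=w_2$ in $M$, i.e. $\varphi(w_1)=\varphi(w_2)$, then $\bar\varphi([w_1])=\bar\varphi([w_2])$, whence $[w_1]=[w_2]$, that is, $w_1\sim_R w_2$. This is exactly condition 2 of Proposition \ref{provingpresentation}, and together with condition 1 that proposition delivers the conclusion that $\langle X\mid R\rangle$ is a presentation for $M$. There is no genuinely hard step here; the only points demanding care are the well-definedness of $\bar\varphi$ (equivalently, the inclusion $\sim_R\subseteq\ker\varphi$) and keeping the direction of each cardinality inequality straight, since the whole argument is powered by the finiteness of $M$.
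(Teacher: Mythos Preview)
Your argument is correct and is exactly the standard proof of this result. Note, however, that the paper does not actually supply a proof of Theorem \ref{ruskuc}: it merely states the result and cites Ru\v{s}kuc's thesis \cite{Ruskuc:1995}, so there is nothing to compare your approach against beyond confirming that what you wrote is the expected counting argument.
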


Notice that, if $W$ satisfies the above conditions, then, in fact,
$|W|=|M|$.

\smallskip

Let $X$ be an alphabet, let $R\subseteq X^*\times X^*$ be a set of
relations and let $W$ be a subset of $X^*$. We say that $W$ is a set of
\textit{canonical forms} for a finite monoid $M$
if Conditions 2 and 3 of Theorem \ref{ruskuc} are satisfied. Suppose
that the empty word belongs to $W$ and, for each letter $x\in X$ and
for each word $w\in W$, there exists a word $w'\in W$ such that $wx\sim_R w'$.
Then, it is easy to show that $W$ satisfies Condition 2.

In this paper, we will mainly use the method given by Theorem \ref{ruskuc} and the condition described above instead of Condition 2.

\smallskip

For a set $R$ of relations on an alphabet $A$, we also denote by $\sim_R$ the congruence on $B^*$ generated by $R$, for any alphabet $B$ containing $A$.

\medskip

Next, for a set $\Omega$, denote by $\PT(\Omega)$ the monoid (under composition) of all
partial transformations on $\Omega$, by
$\T(\Omega)$ the submonoid of $\PT(\Omega)$ of all full transformations on $\Omega$,
by $\I(\Omega)$ the \textit{symmetric inverse monoid} on $\Omega$, i.e.
the inverse submonoid of $\PT(\Omega)$ of all
partial permutations on $\Omega$,
and by $\Sym(\Omega)$ the \textit{symmetric group} on $\Omega$,
i.e. the subgroup of $\PT(\Omega)$ of all
permutations on $\Omega$.

\smallskip

Let $\Omega$ be a finite set.
Towards the end of the 19th century (1987),
a presentation for the symmetric group $\Sym(\Omega)$ was determined by Moore \cite{Moore:1897}.
Over half a century later (1958), A\u{\i}zen\v{s}tat \cite{Aizenstat:1958}  gave a presentation for
full transformation monoid $\T(\Omega)$.
A few years later (1961),
presentations for the partial transformation monoid $\PT(\Omega)$
and for the symmetric inverse monoid $\I(\Omega)$
were found by Popova \cite{Popova:1961}.
In 1962, A\u{\i}zen\v{s}tat \cite{Aizenstat:1962} and Popova \cite{Popova:1962} exhibited presentations for the monoids of
all order-preserving transformations and of all order-preserving partial transformations of a finite chain, respectively, and from the sixties until our days several authors obtained presentations for many classes of monoids.
Further examples can be found, for instance, in
\cite{
Cicalo&al:2015,
East:2011,
Feng&al:2019,
Fernandes:2001,
Fernandes:2002survey,
Fernandes&Gomes&Jesus:2004,
Fernandes&Quinteiro:2016,
Howie&Ruskuc:1995,
Ruskuc:1995}.

\medskip

Now, let $G=(V,E)$ be a simple graph (i.e. undirected, without loops or multiple edges).
We say that $\alpha\in\PT(V)$ is:
\begin{itemize}
\item a \textit{partial endomorphism} of $G$ if $\{u,v\}\in E$ implies  $\{u\alpha,v\alpha\}\in E$, for all $u,v\in\dom(\alpha)$;
\item a \textit{partial weak endomorphism} of $G$ if $\{u,v\}\in E$ and $u\alpha\ne v\alpha$ imply  $\{u\alpha,v\alpha\}\in E$, for all $u,v\in\dom(\alpha)$;
\item a \textit{partial strong endomorphism} of $G$ if $\{u,v\}\in E$ if and only if  $\{u\alpha,v\alpha\}\in E$, for all $u,v\in\dom(\alpha)$;
\item a \textit{partial strong weak endomorphism} of $G$ if $\{u,v\}\in E$ and $u\alpha\ne v\alpha$ if and only if $\{u\alpha,v\alpha\}\in E$, for all $u,v\in\dom(\alpha)$;
\item a \textit{partial automorphism} of $G$ if $\alpha$ is an injective mapping (i.e. a partial permutation) and $\alpha$ and $\alpha^{-1}$ are both partial endomorphisms.
\end{itemize}
It is easy to show that $\alpha\in\I(V)$ is a partial automorphism of $G$ if and only if $\alpha$ is a partial strong endomorphism of $G$.
Let us denote by:
\begin{itemize}
\item $\PEnd(G)$ the set of all partial endomorphisms of $G$;
\item $\IEnd(G)$ the set of all injective partial endomorphisms of $G$;
\item $\PwEnd(G)$ the set of all partial weak endomorphisms of $G$;
\item $\PsEnd(G)$ the set of all partial strong endomorphisms of $G$;
\item $\PswEnd(G)$ the set of all partial strong weak endomorphisms of $G$;
\item $\PAut(G)$ the set of all partial automorphisms of $G$.
\end{itemize}

Clearly, $\PEnd(G)$, $\IEnd(G)$, $\PwEnd(G)$, $\PsEnd(G)$, $\PswEnd(G)$ and $\PAut(G)$ are submonoids of $\PT(V)$.
Moreover, we have the following Hasse diagram for the set inclusion relation:
\begin{center}
\begin{tikzpicture}[scale=0.5]
\draw (1,0) node{$\bullet$} (0,1) node{$\bullet$} (-1,2) node{$\bullet$} (1,2) node{$\bullet$} (0,3) node{$\bullet$} (2,1) node{$\bullet$};
\draw (-0.6,0) node{\small$\PAut(G)$} (-1.9,1.0) node{\small$\PsEnd(G)$}
(-3.0,2.0) node{\small$\PswEnd(G)$} (2.7,2.0) node{\small$\PEnd(G)$}
(1.95,3.0) node{\small$\PwEnd(G)$} (3.5,1.0) node{\small$\IEnd(G)$};
\draw[thick] (1,0) -- (0,1);
\draw[thick] (0,1) -- (-1,2);
\draw[thick] (0,1) -- (1,2);
\draw[thick] (-1,2) -- (0,3);
\draw[thick] (1,2) -- (0,3);
\draw[thick] (1,0) -- (2,1);
\draw[thick] (2,1) -- (1,2);
\end{tikzpicture}
\end{center}
(these inclusions may not be strict).
Notice that, $\PAut(G)$ is also an inverse submonoid of $\I(V)$.

\smallskip

Monoids of endomorphisms of graphs have many important applications, particularly related to automata theory; see \cite{Kelarev:2003}.
A large number of interesting results concerning graphs and algebraic properties of their endomorphism monoids have been obtained by several authors (see, for example, \cite{Bottcher&Knauer:1992,Fan:1996,Gu&Hou:2016,Hou&Luo&Fan:2012,Kelarev&Praeger:2003,Knauer:2011,Knauer&Wanichsombat:2014, Li:2003,Wilkeit:1996}).
In recent years, the authors together with T. Quinteiro, also have studied such kinds of monoids, in particular by considering finite undirected paths and cycles;
see \cite{Dimitrova&Fernandes&Koppitz&Quinteiro:2020,Dimitrova&Fernandes&Koppitz&Quinteiro:2021,Dimitrova&Fernandes&Koppitz&Quinteiro:2023arxiv}.

\smallskip

For any non negative integer $n$,  let $\Omega_n=\{1,2,\ldots,n\}$ and $\Omega_n^0=\{0, 1,\ldots,n\}=\{0\}\cup\Omega_n$.
Notice that, $\Omega_0=\emptyset$ and $\Omega_0^0=\{0\}$.
For an integer $n\geqslant1$, consider the \textit{star graph}
$$
S_n=(\Omega_{n-1}^0, \{\{0,i\}\mid 1\leqslant i\leqslant n-1\})
$$
with $n$ vertices.
\begin{center}
\begin{tikzpicture}
\draw (0,0) node{$\bullet$} (0,2) node{$\bullet$} (-1,1) node{$\bullet$} (1,1) node{$\bullet$};
\draw (0.7,0.3) node{$\bullet$} (0.7,1.7) node{$\bullet$} (-0.7,1.7) node{$\bullet$};
\draw (0,-0.2) node{$\scriptstyle5$} (0,2.25) node{$\scriptstyle1$} (-1.43,1) node{$\scriptstyle n-2$} (1.18,1) node{$\scriptstyle3$};
\draw (0.9,0.3) node{$\scriptstyle4$} (0.95,1.7) node{$\scriptstyle2$} (-1.1,1.7) node{$\scriptstyle n-1$};
\draw (0,1) node{$\bullet$}; \draw (-.15,.85) node{$\scriptstyle0$};
\draw[thick] (0,1) -- (0,0); \draw[thick] (0,1) -- (0,2); \draw[thick] (0,1) -- (-1,1); \draw[thick] (0,1) -- (1,1);
\draw[thick] (0,1) -- (0.7,0.3); \draw[thick] (0,1) -- (0.7,1.7) ; \draw[thick] (0,1) -- (-0.7,1.7);
\draw[thick,dotted] (0,0) arc (-90:-180:1);
\end{tikzpicture}
\end{center}

These very basic graphs, which are special examples of \textit{trees} and also of \textit{complete bipartite graphs}, play a significant role in Graph Theory,
for instance, through the notions of \textit{star chromatic number} and \textit{star arboricity}.
We may also find important applications of star graphs in Computer Science.
For example, in Distributed Computing the \textit{star network} is one of the most common computer network topologies.

In 2023, Fernandes and Paulista \cite{Fernandes&Paulista:2023} considered the monoid of all partial isometries of the star graph $S_n$, which coincides with $\PAut(S_n)$, as observed by the authors in \cite{Dimitrova&Fernandes&Koppitz:2024}.
They determined the rank (i.e. the minimum size of a generating set) and the cardinality of this monoid as well as described its Green's relations and exhibited a presentation.

On the other hand, recently, the authors considered in \cite{Dimitrova&Fernandes&Koppitz:2024}
the monoids $\PsEnd(S_n)$, $\PswEnd(S_n)$, $\PEnd(S_n)$,
$\PwEnd(S_n)$ and $\IEnd(S_n)$. For each of these monoids, they studied their regularity, described their Green's relations and computed their cardinalities and ranks. In this paper, our main aim is to determine presentations for each of them.

\smallskip

For general background on Semigroup Theory and standard notations, we would like to refer the reader to Howie's book \cite{Howie:1995}.
Regarding Algebraic Graph Theory, we refer to Knauer's book \cite{Knauer:2011}.
We would also like to point out that we made massive use of computational tools, namely GAP \cite{GAP4}.

\section{On partial endomorphisms of a star graph}\label{prelim}

In this section, we recall known facts about the monoids $\PsEnd(S_n)$, $\PswEnd(S_n)$, $\PEnd(S_n)$, $\PwEnd(S_n)$, $\PAut(S_n)$ and $\IEnd(S_n)$ that we will use later in this paper. For more details, see \cite{Dimitrova&Fernandes&Koppitz:2024} and \cite{Fernandes&Paulista:2023}.

\smallskip

Let $n\geqslant3$.
Let us consider the following transformations of $\PT(\Omega_{n-1})$:
$$
a=\begin{pmatrix}
1 & 2 & 3 & \cdots & n-1 \\
2 & 1 & 3 &\cdots & n-1
\end{pmatrix}, \quad
b=\begin{pmatrix}
1 & 2 & \cdots & n-2 &n-1 \\
2 & 3 & \cdots & n-1 & 1
\end{pmatrix}, \quad
e=\begin{pmatrix}
1 & 2 & 3 & \cdots & n -1\\
1 & 1 & 3 & \cdots & n-1
\end{pmatrix}
$$
and
$$
f=\begin{pmatrix}
2 & 3 & \cdots & n-1 \\
2 & 3 & \cdots & n-1
\end{pmatrix}.
$$
Then, it is well known that $\{a,b,e,f\}$ is a set of generators of $\PT(\Omega_{n-1})$ (with minimum size for $n\geqslant4$).

\smallskip

Consider the mapping $\zeta:\PT(\Omega_{n-1}^0)\longrightarrow\PT(\Omega_{n-1}^0)$, $\alpha\longmapsto\zeta_\alpha$,
defined by $\dom(\zeta_\alpha)=\dom(\alpha)\cup\{0\}$, $0\zeta_\alpha=0$ and
$\zeta_\alpha|_{\Omega_{n-1}}=\alpha|_{\Omega_{n-1}}$,
for any $\alpha\in\PT(\Omega_{n-1}^0)$.
This mapping is neither a homomorphism nor injective.
Despite this, $\zeta|_{\PT(\Omega_{n-1})}$ and $\zeta|_{\I(\Omega_{n-1})}$ are injective homomorphisms and so
$$
\PT(\Omega_{n-1})\simeq\PT(\Omega_{n-1})\zeta=
\{\alpha\in\PT(\Omega_{n-1}^0)\mid\mbox{$0\in\dom(\alpha)$, $0\alpha=0$ and $\Omega_{n-1}\alpha\subseteq\Omega_{n-1}$}\}
$$
and
$$
\I(\Omega_{n-1})\simeq\I(\Omega_{n-1})\zeta=
\{\alpha\in\I(\Omega_{n-1}^0)\mid\mbox{$0\in\dom(\alpha)$, $0\alpha=0$}\}.
$$

\smallskip

Define
$$
a_0=\zeta_a=\begin{pmatrix}
0&1 & 2 & 3 & \cdots & n-1 \\
0&2 & 1 & 3 &\cdots & n-1
\end{pmatrix}, \quad
b_0=\zeta_b=\begin{pmatrix}
0&1 & 2 & \cdots & n-2 &n-1 \\
0&2 & 3 & \cdots & n-1 & 1
\end{pmatrix},
$$
$$
e_0=\zeta_e=\begin{pmatrix}
0&1 & 2 & 3 & \cdots & n -1\\
0&1 & 1 & 3 & \cdots & n-1
\end{pmatrix}
\quad\text{and}\quad
f_0=\zeta_f=\begin{pmatrix}
0&2 & 3 & \cdots & n-1 \\
0&2 & 3 & \cdots & n-1
\end{pmatrix}.
$$
Then, $\{a_0,b_0,e_0,f_0\}$ is a set of generators of $\PT(\Omega_{n-1})\zeta$ (with minimum size for $n\geqslant4$).
Moreover, we have $a_0,b_0,e_0,f_0\in\PwEnd(S_n)$.
Let us also consider the following transformations of $\PwEnd(S_n)$:
$$
c=\begin{pmatrix}
1 & 2 & \cdots & n-1 \\
0 & 2 &\cdots & n-1
\end{pmatrix}, \quad
c_0=\zeta_c=\begin{pmatrix}
0 & 1 & 2 & \cdots & n-1 \\
0 & 0 & 2 &\cdots & n-1
\end{pmatrix},
$$
$$
d=\begin{pmatrix}
1 & 2 & \cdots &n-1 \\
1 & 2 & \cdots & n-1
\end{pmatrix}, \quad
z=\begin{pmatrix}
0 & 1 & 2 & \cdots & n -1\\
1 & 0 & 0 & \cdots & 0
\end{pmatrix}
\quad\text{and}\quad
z_0=\zeta_z=\begin{pmatrix}
0 & 1 & 2 & \cdots & n -1\\
0 & 0 & 0 & \cdots & 0
\end{pmatrix}.
$$

\smallskip

Let $2\PT_{n-1}=\PT(\Omega_{n-1})\zeta\cup\PT(\Omega_{n-1})$.
Observe that $2\PT_{n-1}$ is the submonoid of $\PT(\Omega_{n-1}^0)$ generated by $\{a_0,b_0,e_0,f_0,d\}$ and
$
|2\PT_{n-1}|=2|\PT(\Omega_{n-1})|=2n^{n-1}.
$

\smallskip

Therefore, we have:
\begin{align*}
\PsEnd(S_n)&=2\PT_{n-1} \cup \{\alpha\in\PT(\Omega_{n-1}^0)\mid \mbox{$0\not\in\dom(\alpha)$ and $\im(\alpha)=\{0\}$}\} \\
&\cup \{\alpha\in\PT(\Omega_{n-1}^0)\mid \mbox{$0\in\dom(\alpha)$, $0\alpha\neq0$ and $\Omega_{n-1}\alpha\subseteq\{0\}$}\},
\end{align*}
$|\PsEnd(S_n)|= 2n^{n-1} + n2^{n-1} -1$
and $\{a_0,b_0,e_0,f_0,d,z\}$ is a generating set of the monoid $\PsEnd(S_n)$;
$$
\PswEnd(S_n)=\PsEnd(S_n)\cup
\{\alpha\in\PT(\Omega_{n-1}^0)\mid \mbox{$0\in\dom(\alpha)$ and $|\im(\alpha)|=1$}\},
$$
$|\PswEnd(S_n)|=2n^{n-1} + n2^n -n-1$
and the set $\{a_0,b_0,e_0,f_0,d,z,z_0\}$ generates the monoid $\PswEnd(S_n)$;
$$
\PEnd(S_n)=\PsEnd(S_n)\cup
\{\alpha\in\PT(\Omega_{n-1}^0)\mid \mbox{$0\not\in\dom(\alpha)$, $0\in\im(\alpha)$ and $\im(\alpha)\cap\Omega_{n-1}\neq\emptyset$}\},
$$
$|\PEnd(S_n)|=(n+1)^{n-1}+n^{n-1}+(n-1)2^{n-1}$
and the set $\{a_0,b_0,e_0,f_0,c,d,z\}$ generates the monoid $\PEnd(S_n)$;
\begin{align*}
\PwEnd(S_n)&=\PEnd(S_n)\cup
\{\alpha\in\PT(\Omega_{n-1}^0)\mid \mbox{$0\in\dom(\alpha)$, $0\alpha=0$ and $0\in\Omega_{n-1}\alpha$}\}\\
&\cup
\{\alpha\in\PT(\Omega_{n-1}^0)\mid \mbox{$\{0\}\subsetneq\dom(\alpha)$, $0\alpha\neq0$ and $0\alpha\in\Omega_{n-1}\alpha\subseteq\{0,0\alpha\}$}\}
\end{align*}
$|\PwEnd(S_n)|=2(n+1)^{n-1}+(n-1)3^{n-1}$
and $\{a_0,b_0,e_0,f_0,c_0,d,z\}$ is a generating set of the monoid $\PwEnd(S_n)$.

\smallskip

Finally, regarding injective partial endomorphisms, let
$$
e_1=\begin{pmatrix}0 & 1 & \cdots & n-2 \\ 0 & 1 & \cdots & n-2\end{pmatrix}
\quad\text{and}\quad
z_1=\begin{pmatrix}0 & 1 \\  1 & 0\end{pmatrix}.
$$
It is well known that  $\left\{a, b, \left(\begin{smallmatrix}1 & 2 & \cdots & n-2 \\ 1 & 2 & \cdots & n-2\end{smallmatrix}\right)\right\}$
generates $\I(\Omega_{n-1})$ (for example, see \cite{Howie:1995}).
Then, $\{a_0,b_0,e_1\}= \left\{\zeta_a, \zeta_b, {\left(\begin{smallmatrix}1 & 2 & \cdots & n-2 \\ 1 & 2 & \cdots & n-2\end{smallmatrix}\right)}\zeta\right\}$,
 $\{a_0,b_0,e_1,d,z_1\}$ and $\{a_0,b_0,e_1,c,d,z_1\}$ are generating sets of the monoids $\I(\Omega_{n-1})\zeta$, $\PAut(S_n)$ and $\IEnd(S_n)$, respectively.
Recall that
$$
\IEnd(S_n)=\PAut(S_n)\cup K_0,
$$
where $K_0=\{\alpha\in\I(\Omega_{n-1}^0)\mid \mbox{$0\not\in\dom(\alpha)$, $0\in\im(\alpha)$ and $\im(\alpha)\cap\Omega_{n-1}\neq\emptyset$}\}$,
$|K_0|=\sum_{k=2}^{n-1}\binom{n-1}{k}\binom{n-1}{k-1}k!$,
$|\PAut (S_n)|=1+n^2+2\sum_{k=1}^{n-1}\binom{n-1}{k}^2k!$ and
$|\IEnd(S_n)|=3+3n^2-4n+\sum_{k=2}^{n-1}\left(\binom{n}{k}+\binom{n-1}{k}\right)\binom{n-1}{k}k!$.

\smallskip

Notice also that, for $n\geqslant4$, all these generating sets have minimum size.

\section{A presentation for $\PsEnd(S_n)$}

Let $R_0$ be a set of relations defining $\PT(\Omega_{n-1})\zeta$ on the generators $a_0$, $b_0$, $e_0$ and $f_0$ and let $R_d$ be the set $R_0$ together with the relations
$$
d^2=d,~ a_0d=da_0,~ b_0d=db_0,~ e_0d=de_0 ~\text{and}~ f_0d=df_0
$$
on the alphabet $\{a_0,b_0,e_0,f_0,d\}$.
Then, we have:

\begin{theorem}\label{pres2pt}
The presentation $\langle a_0,b_0,e_0,f_0,d\mid R_d\rangle$ defines the monoid $2\PT_{n-1}$.
\end{theorem}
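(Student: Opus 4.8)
The plan is to apply the Guess and Prove method (Theorem \ref{ruskuc}) with generating set $X=\{a_0,b_0,e_0,f_0,d\}$, relation set $R=R_d$, and a suitably chosen set $W$ of canonical forms. The entire argument rests on a single structural observation: in $2\PT_{n-1}$ the transformation $d$ is the partial identity on $\Omega_{n-1}$ (it deletes $0$ from the domain and fixes every other point), so the five relations added to $R_0$ express precisely that $d$ is a central idempotent.

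First I would check Condition~1, that the generators of $2\PT_{n-1}$ satisfy every relation of $R_d$. The relations of $R_0$ hold by hypothesis, since $R_0$ defines $\PT(\Omega_{n-1})\zeta$ on $a_0,b_0,e_0,f_0$. For the five new relations, a direct computation gives $d^2=d$ and shows that $d$ commutes with each of $a_0,b_0,e_0,f_0$: since each generator $g\in\{a_0,b_0,e_0,f_0\}$ fixes $0$ and maps $\Omega_{n-1}$ into $\Omega_{n-1}$, both products $gd$ and $dg$ coincide with the restriction $g|_{\Omega_{n-1}}$, whence $gd=dg$.

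Next, let $W_0\subseteq\{a_0,b_0,e_0,f_0\}^*$ be a set of canonical forms for $\PT(\Omega_{n-1})\zeta$ associated with $R_0$, containing the empty word and satisfying $|W_0|=|\PT(\Omega_{n-1})\zeta|=n^{n-1}$. I would then take
$$
W=W_0\cup W_0d=\{u,\,ud\mid u\in W_0\}.
$$
As the words of $W_0$ are $d$-free while those of $W_0d$ end in $d$, the two sets are disjoint, so $|W|=2n^{n-1}=|2\PT_{n-1}|$, which settles Condition~3. For Condition~2 I would invoke the closure criterion stated after Theorem \ref{ruskuc}: the empty word lies in $W_0\subseteq W$, and it remains to check that $wx\sim_{R_d}w'$ for some $w'\in W$, for every $w\in W$ and every generator $x$. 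If $w=u\in W_0$ and $x\in\{a_0,b_0,e_0,f_0\}$, then $ux$ is $d$-free and reduces by $R_0$ (hence by $R_d$, as $R_0\subseteq R_d$) to some $u'\in W_0\subseteq W$, while $ud\in W_0d\subseteq W$ outright. If $w=ud\in W_0d$ and $x\in\{a_0,b_0,e_0,f_0\}$, the commutation relations give $udx\sim_{R_d}uxd$, and reducing $ux$ to $u'\in W_0$ yields $u'd\in W_0d\subseteq W$; finally $udd\sim_{R_d}ud\in W_0d$ by $d^2=d$. This exhausts all cases, so Theorem \ref{ruskuc} applies and delivers the result.

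The proof is largely routine, and the only point demanding care is the interaction between the two ``copies'' $W_0$ and $W_0d$ in the closure check, together with the cardinality bound in Condition~3. The Guess and Prove method disposes of the latter automatically: one needs only $|W|\le|2\PT_{n-1}|$, and the disjointness of $W_0$ and $W_0d$ forces $|W|=2n^{n-1}=|2\PT_{n-1}|$, so the bound is tight and the equality predicted by the remark following Theorem \ref{ruskuc} is recovered.
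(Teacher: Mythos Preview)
Your proof is correct, but it takes a different route from the paper's. The paper uses the direct method of Proposition~\ref{provingpresentation}: given any two words $w,w'\in\{a_0,b_0,e_0,f_0,d\}^*$ representing the same element of $2\PT_{n-1}$, it pushes all occurrences of $d$ to the left using centrality, obtaining $w\sim_{R_d}d^{i_1}w_1$ and $w'\sim_{R_d}d^{i_2}w_2$ with $w_1,w_2\in\{a_0,b_0,e_0,f_0\}^*$; equality in $2\PT_{n-1}$ forces $i_1=i_2$ (membership of $0$ in the domain decides this) and then $w_1=w_2$ in $\PT(\Omega_{n-1})\zeta$, whence $w_1\sim_{R_0}w_2$. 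You instead invoke the Guess and Prove method (Theorem~\ref{ruskuc}) with explicit canonical forms $W=W_0\cup W_0d$ and verify the closure condition. The underlying structural observation (centrality of $d$) is identical; what differs is the framework theorem applied. Your approach has the minor advantage that it explicitly produces the canonical forms the paper goes on to use for the larger monoids (there written as $W_1=dW_0$, with $d$ on the left rather than the right), while the paper's approach is slightly more economical in that it does not need to enumerate canonical forms or verify closure case by case.
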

\begin{proof}
First, observe that it is easy to check that all relations of $R_d$ are satisfied by the generators $a_0$, $b_0$, $e_0$, $f_0$ and $d$ of $2\PT_{n-1}$.

Next, let $w$ and $w'$ be two words of $\{a_0,b_0,e_0,f_0,d\}^*$ such that $w=w'$ in $2\PT_{n-1}$.
Through the relations $d^2=d$ and $xd=dx$ with $x\in \{a_0,b_0,e_0,f_0\}$, it is clear that $w\sim_{R_d}d^{i_1}w_1$ and $w'\sim_{R_d}d^{i_2}w_2$
for some $i_1,i_2\in\{0,1\}$ and $w_1,w_2\in \{a_0,b_0,e_0,f_0\}^*$.

Since we have $d^{i_1}w_1=w=w'=d^{i_2}w_2$ in $2\PT_{n-1}$, it easy to deduce that $i_1=i_2$.
If $i_1=i_2=0$, then we get immediately $w_1=w_2$ in $\PT(\Omega_{n-1})\zeta$.
On the other hand, if $i_1=i_2=1$, then $dw_1=dw_2$ in $2\PT_{n-1}$ implies that we get again $w_1=w_2$ in $\PT(\Omega_{n-1})\zeta$.
It follows that  $w_1\sim_{R_0}w_2$ and so $dw_1\sim_{R_0}dw_2$.
Thus, $w\sim_{R_d}w'$, which concludes the proof.
\end{proof}

Now, let $R_\mathrm{s}$ be the set of relations $R_d$ together with the relations
$$
a_0z=z,~ b_0z=z,~ e_0z=z,~ dz=zf_0,~ zd=(f_0b_0)^{n-1}z,~ z^2=(e_0b_0)^{n-3}e_0  ~\text{and}~  d(f_0b_0)^{n-1}z=d(f_0b_0)^{n-1}
$$
on the alphabet $\{a_0,b_0,e_0,f_0,d,z\}$.

Next, we present a very simple property that we will use repeatedly below without mentioning it explicitly.

\begin{lemma}
$z^3\sim_{R_\mathrm{s}} z$.
\end{lemma}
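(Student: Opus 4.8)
The plan is to exploit the relation $z^2=(e_0b_0)^{n-3}e_0$ of $R_\mathrm{s}$ to turn $z^3$ into a word in which $z$ occurs only once, as a suffix, and then to delete the remaining prefix letter by letter using the three absorption relations $a_0z=z$, $b_0z=z$ and $e_0z=z$. Since these absorption relations all carry $z$ on the right, the correct way to split the cube is $z^3=z^2\,z$ (working on $z\,z^2$ instead would leave a $z$ stranded on the left, with no relation available to move it through the $e_0$'s and $b_0$'s). Substituting for the left factor gives
$$
z^3=z^2z\sim_{R_\mathrm{s}}(e_0b_0)^{n-3}e_0z.
$$

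Next I would carry out the reduction of $(e_0b_0)^{n-3}e_0z$ to $z$. The suffix $e_0z$ is replaced by $z$ via $e_0z=z$, yielding $(e_0b_0)^{n-3}z$. It then suffices to prove $(e_0b_0)^kz\sim_{R_\mathrm{s}}z$ for every $k\geqslant0$, which follows by a trivial induction: the case $k=0$ is an identity, and for the inductive step one writes $(e_0b_0)^{k+1}z=(e_0b_0)^ke_0b_0z$ and applies $b_0z=z$ followed by $e_0z=z$ to obtain $(e_0b_0)^kz$, to which the induction hypothesis applies. Taking $k=n-3$ (which is a nonnegative integer because $n\geqslant3$) gives $(e_0b_0)^{n-3}z\sim_{R_\mathrm{s}}z$, and hence $z^3\sim_{R_\mathrm{s}}z$.

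There is essentially no obstacle here; the only things to get right are the choice of decomposition $z^3=z^2z$ rather than $z\,z^2$, dictated by the orientation of the absorption relations, and the bookkeeping in the stripping argument. As a sanity check, for $n=3$ the exponent $n-3$ vanishes, so $z^2=e_0$ and the whole computation collapses to the single step $z^3=e_0z\sim_{R_\mathrm{s}}z$.
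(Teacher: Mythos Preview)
Your proof is correct and follows exactly the same approach as the paper: decompose $z^3=z^2z$, replace $z^2$ by $(e_0b_0)^{n-3}e_0$, and then absorb the prefix into $z$ using $e_0z=z$ and $b_0z=z$. The paper simply compresses the stripping step into a single $\sim_{R_\mathrm{s}}$, whereas you spell out the induction explicitly.
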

\begin{proof}
We have $z^3=z^2z\sim_{R_\mathrm{s}} (e_0b_0)^{n-3}e_0z \sim_{R_\mathrm{s}} z$, as required.
\end{proof}

Let $W_0\subseteq \{a_0,b_0,e_0,f_0\}^*$ be a set of canonical forms for $\PT(\Omega_{n-1})\zeta$ on the generators $a_0$, $b_0$, $e_0$ and $f_0$,
containing the empty word and the word $(f_0b_0)^{n-1}$.
For $1\leqslant i_1<\cdots<i_k\leqslant n-1$ with $1\leqslant k\leqslant n-1$, denote by $w_{i_1,\ldots, i_k}$ the word of $W_0$ representing the element
$\left(\begin{smallmatrix} 0 & i_1 & \cdots & i_k \\ 0 & 1 & \cdots & 1\end{smallmatrix}\right)$ of $\PT(\Omega_{n-1})\zeta$.
Let $W_1=dW_0$,
$$
W_2 = \{w_{i_1,\ldots, i_k}zb_0^{j-1} \mid  1\leqslant i_1<\cdots<i_k\leqslant n-1, ~ 1\leqslant j,k\leqslant n-1\},
$$
$$
W_3 = \{dw_{i_1,\ldots, i_k}z \mid  1\leqslant i_1<\cdots<i_k\leqslant n-1, ~ 1\leqslant k\leqslant n-1\},
$$
$$
W_4 = \{(f_0b_0)^{n-1}zb_0^{j-1} \mid  1\leqslant j\leqslant n-1\}
$$
and $W_\text{s}=W_0\cup W_1\cup W_2\cup W_3\cup W_4$.
Observe that
$$
|W_\mathrm{s}|=n^{n-1} + n^{n-1}  + (n-1)(2^{n-1}-1) + (2^{n-1}-1) + (n-1) = 2n^{n-1} + n2^{n-1} -1 = |\PsEnd(S_n)|.
$$
\begin{lemma}\label{sfun}
Let $w\in W_\mathrm{s}$ and $x\in \{a_0,b_0,e_0,f_0,d,z\}$. Then, there exists $w'\in W_\mathrm{s}$ such that $wx\sim_{R_\mathrm{s}} w'$.
\end{lemma}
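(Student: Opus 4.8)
The plan is to use the observation following Theorem~\ref{ruskuc}: since the empty word lies in $W_0\subseteq W_\mathrm{s}$, proving Lemma~\ref{sfun} is precisely the closure step that will later give Condition~2 for the presentation. I would therefore split the verification into the five cases $w\in W_0,W_1,W_2,W_3,W_4$ and, within each, run through the six generators $x$, in every instance computing the element $wx$, locating the family $W_i$ containing its canonical form, and exhibiting a chain of relations of $R_\mathrm{s}$ realising $wx\sim_{R_\mathrm{s}}w'$. The cases not involving $z$ are routine: for $w\in W_0$ and $x\in\{a_0,b_0,e_0,f_0\}$, $wx$ is a word over $\{a_0,b_0,e_0,f_0\}$ representing an element of $\PT(\Omega_{n-1})\zeta$, so, as $R_0$ defines $\PT(\Omega_{n-1})\zeta$, it returns to its canonical form in $W_0$; for $x=d$ the commuting relations of $R_d$ give $wd\sim_{R_\mathrm{s}}dw\in W_1$. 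The family $W_1=dW_0$ is handled identically, using $d^2=d$ to absorb a second $d$, and in $W_2,W_3,W_4$ right multiplication by $b_0$ merely increments the exponent $j$ (wrapping through $b_0^{n-1}\sim_{R_0}\varepsilon$), while multiplication by $d$ is absorbed via $zd=(f_0b_0)^{n-1}z$ together with the collapse $(f_0b_0)^{n-1}\sim_{R_0}\transf{0\\0}$.

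The substance is the interaction with $z$, which I would organise around two auxiliary reductions. The first is \emph{left-absorption}: from $a_0z=z$, $b_0z=z$, $e_0z=z$ one obtains by induction $\sigma z\sim_{R_\mathrm{s}}z$ for every word $\sigma$ over $\{a_0,b_0,e_0\}$, that is, for every full transformation. This settles $w\in W_0$, $x=z$: writing the element $\alpha$ of $w$ as $\alpha=\iota\bar\alpha$ with $\iota$ the partial identity on $\dom\alpha$ and $\bar\alpha$ a full transformation (a word over $\{a_0,b_0,e_0\}$), one gets $wz\sim_{R_\mathrm{s}}\iota z$, which depends only on $\dom\alpha\cap\Omega_{n-1}$ and so coincides modulo $\sim_{R_\mathrm{s}}$ with $w_{i_1,\ldots,i_k}z\in W_2$ (or with $(f_0b_0)^{n-1}z\in W_4$ when $\dom\alpha\cap\Omega_{n-1}=\emptyset$), the latter being reduced the same way.

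The second reduction, and the real heart of the proof, is a $z$-reduction identity controlling everything multiplied on the \emph{right} of $z$: for every word $\gamma$ over $\{a_0,b_0,e_0,f_0\}$ representing $\bar\gamma\in\PT(\Omega_{n-1})\zeta$,
\[
z\gamma\sim_{R_\mathrm{s}}
\begin{cases}
zb_0^{\,1\bar\gamma-1} & \text{if } 1\in\dom\bar\gamma,\\
z(f_0b_0)^{n-1} & \text{if } 1\notin\dom\bar\gamma,
\end{cases}
\]
so that $z\gamma$ is determined solely by the image of the single point $1$ under $\bar\gamma$. I would prove it by the short chain $z\gamma\sim_{R_\mathrm{s}}z^3\gamma=z(z^2\gamma)\sim_{R_\mathrm{s}}z\,(e_0b_0)^{n-3}e_0\gamma$, using the preceding lemma $z^3\sim_{R_\mathrm{s}}z$ and the relation $z^2=(e_0b_0)^{n-3}e_0$; the word $(e_0b_0)^{n-3}e_0\gamma$ represents $\eta\bar\gamma$, where $\eta$ collapses $\Omega_{n-1}$ onto $\{1\}$ and fixes $0$, and since $\eta\bar\gamma=\eta b_0^{\,1\bar\gamma-1}$ (respectively $\eta\bar\gamma=\transf{0\\0}$) in $\PT(\Omega_{n-1})\zeta$, the completeness of $R_0$ rewrites it as $(e_0b_0)^{n-3}e_0b_0^{\,1\bar\gamma-1}$ (respectively $(f_0b_0)^{n-1}$); one last use of $z^3\sim_{R_\mathrm{s}}z$ then yields the stated form.

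With these tools the remaining $z$-cases fall into place, though they demand care because $f_0$ and $d$ move a word between families. Right multiplication of a $W_2$- or $W_4$-word by $a_0,b_0,e_0$ only updates $j$ via the identity above, and by $z$ it collapses $z^2$ back to $(e_0b_0)^{n-3}e_0$, returning the word to $W_0$ or $W_1$; a $W_3$-word is fixed by $e_0$, while for $a_0,b_0$ one first rewrites it through $zf_0=dz$ and then invokes the $1\notin\dom\bar\gamma$ branch. The genuinely delicate transitions are $zf_0=dz$ sending a $W_2$-word with $j=1$ into $W_3$, an $f_0$ at the end of a $W_4$-word with $j=1$ yielding the empty map $\sim_{R_\mathrm{s}}d(f_0b_0)^{n-1}\in W_1$ through $d(f_0b_0)^{n-1}z=d(f_0b_0)^{n-1}$, and the passages $W_2\to W_4$ under $d$ and $W_3\to W_1$ under $z$. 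I expect the main obstacle to be exactly this inter-family bookkeeping: for each of the thirty pairs $(w,x)$ one must correctly identify the target family and assemble the right chain of relations, the $f_0$- and $d$-multiplications and the wrap-around $j=n-1\mapsto j=1$ being the steps most prone to error, since there the domain or the position of $0$ changes. The conceptual difficulty, however, is concentrated entirely in the $z$-reduction identity; once it is in hand, every case becomes a finite, if tedious, verification.
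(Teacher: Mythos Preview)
Your proposal is correct and follows essentially the same route as the paper. The core mechanism is identical: both arguments hinge on inserting $z^2$ via $z^3\sim_{R_\mathrm{s}}z$ and then using $z^2=(e_0b_0)^{n-3}e_0$ together with the completeness of $R_0$ to rewrite the $\{a_0,b_0,e_0,f_0\}$-fragment, and both organise the verification as a pass through the five families $W_0,\ldots,W_4$.

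The only noteworthy difference is packaging. Your general $z$-reduction identity $z\gamma\sim_{R_\mathrm{s}}zb_0^{\,1\bar\gamma-1}$ (with the $1\notin\dom\bar\gamma$ branch) is exactly what the paper proves piecemeal as the three displayed formulas (\ref{z2}), (\ref{z2e}), (\ref{z2f}) for $z^2b_0^{j-1}x$ with $x\in\{a_0,e_0,f_0\}$; your version is cleaner to state and avoids repeating the same three-line argument. For the case $w\in W_0$, $x=z$, the paper goes directly via $wz\sim wz^2z\sim w(e_0b_0)^{n-3}e_0z\sim_{R_0}w_{i_1,\ldots,i_k}z$, whereas you first factor $\alpha=\iota\bar\alpha$ and use left-absorption to reach $u_\iota z$; both are valid, but note that your concluding step ``coincides modulo $\sim_{R_\mathrm{s}}$ with $w_{i_1,\ldots,i_k}z$'' implicitly reapplies the same factorisation to $w_{i_1,\ldots,i_k}$ (which shares the domain, hence the same $\iota$) --- this should be made explicit, since $\iota$ and $w_{i_1,\ldots,i_k}$ are different transformations and $R_0$ alone does not relate them. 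With that one clarification your argument is complete.
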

\begin{proof}
Let us denote $\sim_{R_\mathrm{s}}$ simply by $\sim$.

\smallskip

We start the proof by considering $x=z$.

If $w\in W_0$, then $wz \sim wz^2z \sim w(e_0b_0)^{n-3}e_0z \sim_{R_0} w_{i_1,\ldots, i_k}z\in W_2$,
where $\{0<i_1<\cdots<i_k\}=\dom(w)$.

Now, let us take $w\in W_1$. Then, $w=dw_0$, for some $w_0\in W_0$.
From the previous case, we have $w_0z \sim w_{i_1,\ldots, i_k}z$, where $\{0<i_1<\cdots<i_k\}=\dom(w_0)$.
Then, $wz=dw_0z \sim d w_{i_1,\ldots, i_k}z\in W_3$.

Next, suppose that $w\in W_2$. Then, $w=w_{i_1,\ldots, i_k}zb_0^{j-1}$, for some $1\leqslant i_1<\cdots<i_k\leqslant n-1$ and $1\leqslant j,k\leqslant n-1$.
Hence, $wz=w_{i_1,\ldots, i_k}zb_0^{j-1}z \sim w_{i_1,\ldots, i_k}z^2 \sim w_{i_1,\ldots, i_k}(e_0b_0)^{n-3}e_0 \sim_{R_0} w_{i_1,\ldots, i_k}\in W_0$.

If $w\in W_3$, then $w=dw_{i_1,\ldots, i_k}z$, for some $1\leqslant i_1<\cdots<i_k\leqslant n-1$ and $1\leqslant k\leqslant n-1$, whence
$wz=dw_{i_1,\ldots, i_k}z^2 \sim dw_{i_1,\ldots, i_k}(e_0b_0)^{n-3}e_0 \sim_{R_0} dw_{i_1,\ldots, i_k}\in W_1$.

Finally, if $w\in W_4$, then $w=(f_0b_0)^{n-1}zb_0^{j-1}$ for some $1\leqslant j\leqslant n-1$, and so
$$
wz=(f_0b_0)^{n-1}zb_0^{j-1}z \sim (f_0b_0)^{n-1}z^2 \sim (f_0b_0)^{n-1}(e_0b_0)^{n-3}e_0 \sim_{R_0} (f_0b_0)^{n-1}\in W_0.
$$

\smallskip

From now on, we suppose that $x\in \{a_0,b_0,e_0,f_0,d\}$.

\smallskip

First, let us take $w\in W_0\cup W_1$. Since $W_0\cup W_1$ is, clearly, a set of canonical forms for $2\PT_{n-1}$ on the generators $a_0$, $b_0$, $e_0$, $f_0$ and $d$, by Theorem \ref{pres2pt}, there exists $w'\in W_0\cup W_1$ such that $wx\sim_{R_d} w'$. Hence, $w'\in W_\mathrm{s}$ and $wx\sim w'$.

\smallskip

Secondly, we suppose that $w\in W_2$.
Let $1\leqslant i_1<\cdots<i_k\leqslant n-1$ and $1\leqslant j,k\leqslant n-1$ be such that $w=w_{i_1,\ldots, i_k}zb_0^{j-1}$.

Suppose that $x=a_0$. Since we have
$$
(e_0b_0)^{n-3}e_0 b_0^{j-1} a_0 =
\left\{\begin{array}{ll}
(e_0b_0)^{n-3}e_0 b_0 & \mbox{if $j=1$} \\
(e_0b_0)^{n-3}e_0 b_0^0 & \mbox{if $j=2$} \\
(e_0b_0)^{n-3}e_0 b_0^{j-1} & \mbox{if $j\geqslant3$}
\end{array}\right.
$$
in $\PT(\Omega_{n-1})\zeta$, then
$$
(e_0b_0)^{n-3}e_0 b_0^{j-1} a_0 \sim_{R_0}
\left\{\begin{array}{ll}
(e_0b_0)^{n-3}e_0 b_0 & \mbox{if $j=1$} \\
(e_0b_0)^{n-3}e_0 b_0^0 & \mbox{if $j=2$} \\
(e_0b_0)^{n-3}e_0 b_0^{j-1} & \mbox{if $j\geqslant3$}
\end{array}\right.
$$
and so
\begin{equation}\label{z2}
z^2 b_0^{j-1} a_0 \sim
\left\{\begin{array}{ll}
z^2 b_0 & \mbox{if $j=1$} \\
z^2 b_0^0 & \mbox{if $j=2$} \\
z^2 b_0^{j-1} & \mbox{if $j\geqslant3$}.
\end{array}\right.
\end{equation}
Hence,
$$
wa_0 =  w_{i_1,\ldots, i_k}zb_0^{j-1}a_0 \sim w_{i_1,\ldots, i_k}z z^2 b_0^{j-1}a_0 \sim
\left\{\begin{array}{ll}
w_{i_1,\ldots, i_k}z z^2 b_0 & \mbox{if $j=1$} \\
w_{i_1,\ldots, i_k}z z^2 b_0^0 & \mbox{if $j=2$} \\
w_{i_1,\ldots, i_k}z z^2 b_0^{j-1} & \mbox{if $j\geqslant3$}
\end{array}\right.
\sim
\left\{\begin{array}{ll}
w_{i_1,\ldots, i_k}z b_0 \in W_2 & \mbox{if $j=1$} \\
w_{i_1,\ldots, i_k}z b_0^0 \in W_2 & \mbox{if $j=2$} \\
w_{i_1,\ldots, i_k}z b_0^{j-1} \in W_2 & \mbox{if $j\geqslant3$}.
\end{array}\right.
$$

Next, suppose that $x=b_0$. So, we have $wb_0=w_{i_1,\ldots, i_k}zb_0^{j-1}b_0\sim_{R_0} w_{i_1,\ldots, i_k}zb_0^{j'-1}\in W_2$,
where $j'=j+1$, if $1\leqslant j\leqslant n-2$, and $j'=1$, if $j=n-1$.

Now, let $x=e_0$. In this case, we have
$$
(e_0b_0)^{n-3}e_0 b_0^{j-1} e_0 =
\left\{\begin{array}{ll}
(e_0b_0)^{n-3}e_0 b_0^0 & \mbox{if $j=1,2$} \\
(e_0b_0)^{n-3}e_0 b_0^{j-1} & \mbox{if $j\geqslant3$}
\end{array}\right.
$$
in $\PT(\Omega_{n-1})\zeta$, then
$$
(e_0b_0)^{n-3}e_0 b_0^{j-1} e_0 \sim_{R_0}
\left\{\begin{array}{ll}
(e_0b_0)^{n-3}e_0 b_0^0 & \mbox{if $j=1,2$} \\
(e_0b_0)^{n-3}e_0 b_0^{j-1} & \mbox{if $j\geqslant3$}
\end{array}\right.
$$
and so
\begin{equation}\label{z2e}
z^2 b_0^{j-1} e_0 \sim
\left\{\begin{array}{ll}
z^2 b_0^0 & \mbox{if $j=1,2$} \\
z^2 b_0^{j-1} & \mbox{if $j\geqslant3$}.
\end{array}\right.
\end{equation}
Hence,
$$
we_0 =  w_{i_1,\ldots, i_k}zb_0^{j-1}e_0 \sim w_{i_1,\ldots, i_k}z z^2 b_0^{j-1}e_0
\!\sim\!
\left\{\begin{array}{ll}
w_{i_1,\ldots, i_k}z z^2 b_0^0 & \mbox{if $j=1,2$} \\
w_{i_1,\ldots, i_k}z z^2 b_0^{j-1} & \mbox{if $j\geqslant3$}
\end{array}\right.
\!\!\!\sim \!
\left\{\begin{array}{ll}
w_{i_1,\ldots, i_k}z b_0^0 \in W_2 & \mbox{if $j=1,2$} \\
w_{i_1,\ldots, i_k}z b_0^{j-1} \in W_2 & \mbox{if $j\geqslant3$}.
\end{array}\right.
$$

Next, we treat the case $x=f_0$.
If $j=1$, then
$$
wf_0= w_{i_1,\ldots, i_k}zf_0\sim w_{i_1,\ldots, i_k}dz\sim_{R_d} dw_{i_1,\ldots, i_k}z\in W_3.
$$
On the other hand, for $j\geqslant2$, in $\PT(\Omega_{n-1})\zeta$ we have the equality
$(e_0b_0)^{n-3}e_0 b_0^{j-1} f_0 = (e_0b_0)^{n-3}e_0 b_0^{j-1}$,
whence $(e_0b_0)^{n-3}e_0 b_0^{j-1} f_0 \sim_{R_0} (e_0b_0)^{n-3}e_0 b_0^{j-1}$ and so
\begin{equation} \label{z2f}
z^2 b_0^{j-1} f_0 \sim z^2 b_0^{j-1},
\end{equation}
which implies that
$$
wf_0= w_{i_1,\ldots, i_k}zb_0^{j-1}f_0 \sim  w_{i_1,\ldots, i_k} z z^2 b_0^{j-1}f_0 \sim w_{i_1,\ldots, i_k} z z^2 b_0^{j-1} \sim w_{i_1,\ldots, i_k} z b_0^{j-1}\in W_2.
$$

To finish the case $w\in W_2$, we deal with $x=d$. Then,
$$
wd = w_{i_1,\ldots, i_k}zb_0^{j-1} d \sim_{R_d}  w_{i_1,\ldots, i_k} zd b_0^{j-1} \sim w_{i_1,\ldots, i_k} (f_0b_0)^{n-1}z b_0^{j-1} \sim_{R_0}  (f_0b_0)^{n-1}z b_0^{j-1} \in W_4.
$$

\smallskip

Thirdly, we suppose that $w\in W_3$. Take $1\leqslant i_1<\cdots<i_k\leqslant n-1$ and $1\leqslant k\leqslant n-1$ be such that $w=dw_{i_1,\ldots, i_k}z$.
If $x=d$, then
$$
wd=dw_{i_1,\ldots, i_k}zd \sim dw_{i_1,\ldots, i_k} (f_0b_0)^{n-1}z \sim_{R_0} d (f_0b_0)^{n-1}z \sim d (f_0b_0)^{n-1}\in W_1.
$$
On the other hand, if $x\in\{a_0,b_0,e_0,f_0\}$, then $(e_0b_0)^{n-3}e_0 f_0 x = (e_0b_0)^{n-3}e_0 f_0$ in $\PT(\Omega_{n-1})\zeta$.
Hence, we have
$(e_0b_0)^{n-3}e_0 f_0 x \sim_{R_0} (e_0b_0)^{n-3}e_0 f_0$ and so $z^2 f_0 x \sim z^2 f_0$, from which follows that
\begin{align*}
wx = dw_{i_1,\ldots, i_k}z x \sim_{R_d} w_{i_1,\ldots, i_k} dz x \sim w_{i_1,\ldots, i_k} zf_0 x \sim w_{i_1,\ldots, i_k} z z^2f_0 x
 \sim  w_{i_1,\ldots, i_k} z z^2f_0    \sim w_{i_1,\ldots, i_k} z f_0 \sim \qquad\quad \\ \sim w_{i_1,\ldots, i_k} d z \sim_{R_d}  dw_{i_1,\ldots, i_k} z =w \in W_3.
\end{align*}

\smallskip

Finally, we suppose that $w\in W_4$. Then, $w=(f_0b_0)^{n-1}zb_0^{j-1}$ for some $1\leqslant j\leqslant n-1$.

If $x=a_0$, in view of (\ref{z2}),  we have
\begin{align*}
wa_0 = (f_0b_0)^{n-1}zb_0^{j-1} a_0 \sim  (f_0b_0)^{n-1}z z^2b_0^{j-1} a_0
\sim
\left\{\begin{array}{ll}
 (f_0b_0)^{n-1}z z^2 b_0 & \mbox{if $j=1$} \\
 (f_0b_0)^{n-1}z z^2 b_0^0 & \mbox{if $j=2$} \\
 (f_0b_0)^{n-1}z z^2 b_0^{j-1} & \mbox{if $j\geqslant3$}
\end{array}\right.
\\  \sim
\left\{\begin{array}{ll}
 (f_0b_0)^{n-1}z b_0 \in W_4& \mbox{if $j=1$} \\
 (f_0b_0)^{n-1}z b_0^0 \in W_4& \mbox{if $j=2$} \\
 (f_0b_0)^{n-1}z b_0^{j-1} \in W_4& \mbox{if $j\geqslant3$}.
\end{array}\right.
\end{align*}

If $x=b_0$, then
$
wb_0= (f_0b_0)^{n-1}zb_0^{j-1} b_0 \sim_{R_0}
(f_0b_0)^{n-1}z b_0^{j'-1}\in W_4,
$
where $j'=j+1$, if $1\leqslant j\leqslant n-2$, and $j'=1$, if $j=n-1$.

If $x=e_0$, by using (\ref{z2e}), we get
\begin{align*}
we_0 = (f_0b_0)^{n-1}z b_0^{j-1} e_0  \sim  (f_0b_0)^{n-1}z z^2b_0^{j-1} e_0
 \sim
\left\{\begin{array}{ll}
(f_0b_0)^{n-1}z z^2 b_0^0 & \mbox{if $j=1,2$} \\
(f_0b_0)^{n-1}z z^2 b_0^{j-1} & \mbox{if $j\geqslant3$}
\end{array}\right.
\\ \sim
\left\{\begin{array}{ll}
(f_0b_0)^{n-1}z b_0^0 \in W_4& \mbox{if $j=1,2$} \\
(f_0b_0)^{n-1}z b_0^{j-1} \in W_4& \mbox{if $j\geqslant3$}.
\end{array}\right.
\end{align*}

Now, suppose that $x=f_0$. If $j=1$, then
$$
wf_0 = (f_0b_0)^{n-1}z f_0 \sim (f_0b_0)^{n-1} dz \sim_{R_d} d(f_0b_0)^{n-1} z \sim  d(f_0b_0)^{n-1} \in W_1.
$$
On the other hand, for $j\geqslant2$, by applying (\ref{z2f}), we obtain
$$
wf_0=(f_0b_0)^{n-1}zb_0^{j-1}f_0 \sim (f_0b_0)^{n-1}z z^2b_0^{j-1}f_0 \sim  (f_0b_0)^{n-1}z z^2b_0^{j-1} \sim (f_0b_0)^{n-1}z b_0^{j-1}=w\in W_4.
$$

To finishes the proof, it remains to consider $x=d$. Then, we have
$$
wd = (f_0b_0)^{n-1}zb_0^{j-1} d \sim_{R_d}  (f_0b_0)^{n-1} zd b_0^{j-1} \sim (f_0b_0)^{n-1} (f_0b_0)^{n-1}z b_0^{j-1}
\sim_{R_0} (f_0b_0)^{n-1}z b_0^{j-1}=w\in W_4,
$$
as required.
\end{proof}

At this point, notice that it is a routine matter to check that all relations of $R_\mathrm{s}$ are satisfied by the generators $a_0$, $b_0$, $e_0$, $f_0$, $d$ and $z$ of $\PsEnd(S_n)$.  Moreover, as already observed, we have $|W_\mathrm{s}|= |\PsEnd(S_n)|$. Thus, in view of Lemma \ref{sfun}, we may conclude the following result.

\begin{theorem}\label{presPs}
The presentation $\langle a_0,b_0,e_0,f_0,d,z\mid R_\mathrm{s}\rangle$ defines the monoid $\PsEnd(S_n)$.
\end{theorem}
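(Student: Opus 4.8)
The plan is to invoke the Guess and Prove method of Theorem~\ref{ruskuc}, taking as generating set $X=\{a_0,b_0,e_0,f_0,d,z\}$, as set of relations $R=R_\mathrm{s}$, and as candidate family of canonical forms $W=W_\mathrm{s}$. The statement then reduces to verifying the three hypotheses of that theorem: that the generators satisfy every relation of $R_\mathrm{s}$ (Condition~1), that every word over $X$ is $\sim_{R_\mathrm{s}}$-equivalent to some word of $W_\mathrm{s}$ (Condition~2), and that $|W_\mathrm{s}|\le|\PsEnd(S_n)|$ (Condition~3).

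Condition~1 is a direct computation in $\PsEnd(S_n)$: the relations of $R_d$ are already known to hold by Theorem~\ref{pres2pt}, so it remains only to evaluate both sides of the seven further relations $a_0z=z$, $b_0z=z$, $e_0z=z$, $dz=zf_0$, $zd=(f_0b_0)^{n-1}z$, $z^2=(e_0b_0)^{n-3}e_0$ and $d(f_0b_0)^{n-1}z=d(f_0b_0)^{n-1}$ as partial transformations of $\Omega_{n-1}^0$ and check that they agree. For Condition~2 I would appeal to the simplified criterion recorded immediately before Theorem~\ref{ruskuc}: it suffices that the empty word belong to $W_\mathrm{s}$ and that $W_\mathrm{s}$ be closed, up to $\sim_{R_\mathrm{s}}$, under right multiplication by each generator. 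The empty word lies in $W_0\subseteq W_\mathrm{s}$ by construction, while the required closure is precisely the content of Lemma~\ref{sfun}. Condition~3 is then immediate, since the displayed computation gives $|W_\mathrm{s}|=2n^{n-1}+n2^{n-1}-1=|\PsEnd(S_n)|$. With all three conditions established, Theorem~\ref{ruskuc} delivers the claim (and, by the remark following that theorem, $W_\mathrm{s}$ is in fact a genuine set of canonical forms, as $|W_\mathrm{s}|=|\PsEnd(S_n)|$).

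The genuine difficulty is not in assembling this final theorem but in the ingredients it consumes, so I expect the formal proof to occupy only a few lines. The substantive work has already been carried out in Lemma~\ref{sfun}, whose verification is a long case analysis over the five blocks $W_0,W_1,W_2,W_3,W_4$ and the six generators; the delicate cases are those involving multiplication by $z$ or reduction of a trailing factor $zb_0^{j-1}$, where the relations $z^2=(e_0b_0)^{n-3}e_0$, $dz=zf_0$ and $zd=(f_0b_0)^{n-1}z$, together with the auxiliary identities (\ref{z2}), (\ref{z2e}) and (\ref{z2f}), are used to push each occurrence of $z$ into its canonical position. A secondary point that must be secured is the cardinality equality $|W_\mathrm{s}|=|\PsEnd(S_n)|$, which presupposes that the five blocks are enumerated without overlap and with exactly the stated counts. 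Once these are in hand, the theorem itself is the clean bookkeeping application of Theorem~\ref{ruskuc} described above.
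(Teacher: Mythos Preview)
Your proposal is correct and follows exactly the same route as the paper: check that the generators satisfy every relation of $R_\mathrm{s}$, invoke Lemma~\ref{sfun} (together with the empty word lying in $W_0\subseteq W_\mathrm{s}$) to obtain Condition~2 via the right-multiplication closure criterion, use the displayed count $|W_\mathrm{s}|=2n^{n-1}+n2^{n-1}-1=|\PsEnd(S_n)|$ for Condition~3, and conclude by Theorem~\ref{ruskuc}. One trivial slip: the simplified closure criterion you cite is recorded immediately \emph{after} Theorem~\ref{ruskuc}, not before.
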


Notice that, if we consider $R_0$ has being Popova's defining relations for $\PT(\Omega_{n-1})$ \cite{Popova:1961} based on Moore's presentation for
$\Sym(\Omega_{n-1})$ \cite{Moore:1897}, we have $|R_0|=n+16$ and so $|R_\mathrm{s}|=n+28$.

\section{A presentation for $\PswEnd(S_n)$}

Let $R_{\mathrm{sw}}$ be the set of relations $R_\mathrm{s}$ together with the following eight relations
\begin{align*}
b_0z_0=z_0,~
zz_0=z_0,~  z_0a_0=z_0,~ z_0b_0=z_0,~ z_0e_0=z_0,~ z_0f_0=z_0,~
dz_0=dz,~ z_0d=d(f_0b_0)^{n-1}
\end{align*}
on the alphabet $\{a_0,b_0,e_0,f_0,d,z,z_0\}$.
It is easy to verify that all relations of $R_{\mathrm{sw}}$ are satisfied by the generators $a_0$, $b_0$, $e_0$, $f_0$, $d$, $z$ and $z_0$ of $\PswEnd(S_n)$.

\smallskip

For each proper non-empty subset $A$ of $\Omega_{n-1}$, let $w_A$ be a word on the alphabet $\{a_0,b_0,f_0\}$ representing the partial identity $\id_{A\cup\{0\}}$. Let $w_{\Omega_{n-1}}$ be the empty word.
Consider the following sets of words on the alphabet $\{a_0,b_0,e_0,f_0,d,z,z_0\}$:
$$
W_5=\{w_Az_0\mid \emptyset \neq A\subseteq\Omega_{n-1}\}
$$
and
$$
W_6=\{w_Az_0zb_0^k\mid \emptyset \neq A\subseteq\Omega_{n-1},~0\leqslant k\leqslant n-2\}.
$$
Let us also consider a set of canonical forms for $\PsEnd(S_n)$ on the generators $a_0$, $b_0$, $e_0$, $f_0$, $d$ and $z$,
containing the empty word, for instance, $W_\mathrm{s}$. Let
$$
W_\mathrm{sw}=W_\mathrm{s}\cup W_5\cup W_6.
$$

Observe that,
$|W_\mathrm{sw}|=|W_\mathrm{s}|+|W_5|+|W_6|=(2n^{n-1} + n2^{n-1} -1) + (2^{n-1}-1) + (n-1)(2^{n-1}-1) =
2n^{n-1} + n2^n -n-1 =|\PswEnd(S_n)|$.

\begin{lemma}\label{z0}
$z_0^2\sim_{R_\mathrm{sw}}z_0$.
\end{lemma}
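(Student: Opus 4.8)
The goal is to prove that $z_0^2 \sim_{R_{\mathrm{sw}}} z_0$, using only the defining relations of $R_{\mathrm{sw}}$. The plan is to rewrite one occurrence of $z_0$ in the product $z_0^2$ so that it cancels against or collapses with the other. The most promising entry point is the relation $zz_0 = z_0$, which lets me introduce or absorb a $z$ adjacent to $z_0$, combined with the fact that $z^2 \sim (e_0b_0)^{n-3}e_0$ and the absorption relations $z_0a_0 = z_0b_0 = z_0e_0 = z_0f_0 = z_0$ from $R_{\mathrm{sw}}$.

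First I would examine the transformation $z_0$ itself: it is the constant map sending everything in its domain to $0$, with $0 \in \dom(z_0)$ and $0z_0 = 0$. Reading off the relations available, the collection $z_0a_0 = z_0$, $z_0b_0 = z_0$, $z_0e_0 = z_0$, $z_0f_0 = z_0$ shows that $z_0$ absorbs every generator of $\PT(\Omega_{n-1})\zeta$ on the right. Since $z^2 \sim_{R_{\mathrm{s}}} (e_0b_0)^{n-3}e_0$ is a word purely in $e_0$ and $b_0$, I expect $z_0 z^2 \sim z_0 (e_0b_0)^{n-3}e_0 \sim z_0$ by repeated application of the right-absorption relations. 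The key computation is then to write $z_0^2 = z_0 \cdot z_0$ and insert a factor of $z$ via $zz_0 = z_0$: concretely, $z_0^2 = z_0 z_0 \sim z_0 (z z_0) = (z_0 z) z_0$, and more usefully I would aim to produce $z_0 z^2 z_0$ or similar so the middle $z^2$ collapses.

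The cleanest route I anticipate is: start from $z_0^2$, replace the second $z_0$ using $z_0 \sim zz_0$ (from $zz_0 = z_0$) to get $z_0 zz_0$, then look for a way to simplify $z_0 z$. Since there is no direct relation for $z_0 z$, I would instead work from the relation $z z_0 = z_0$ applied to the first factor: $z_0^2 \sim z_0 z z_0$ by substituting $z_0 \sim z z_0$ in the first position is not directly available (the substitution $z_0 = zz_0$ replaces a left factor), so the natural manipulation is $z_0^2 = z_0 z_0 \sim z_0 (z z_0) = z_0 z z_0$. Now I need $z_0 z z_0 \sim z_0$. Here I would use $z^2 \sim (e_0b_0)^{n-3}e_0$ together with right-absorption: if I can show $z_0 z \sim z_0 z^{-1}$-type simplification fails, the better idea is to interpret the word semantically — but within $R_{\mathrm{sw}}$ alone, I would aim to reduce $z_0 z z_0$ by first handling $z_0 z$ via inserting $z^2$: $z_0 z z_0 \sim z_0 z (z z_0) = z_0 z^2 z_0 \sim z_0 (e_0 b_0)^{n-3} e_0 z_0 \sim z_0 z_0$, which is circular.

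The main obstacle is therefore avoiding circularity: I must reduce $z_0^2$ to $z_0$ without reintroducing $z_0^2$. The decisive step should be to collapse a block of the form $z_0 w z_0$ where $w$ is a word in $\{a_0,b_0,e_0,f_0\}^*$, using the right-absorption relations on the first $z_0$ to kill $w$ entirely, yielding $z_0 z_0$ — so I instead need a relation that removes an \emph{entire} trailing $z_0$. The correct strategy is to produce, on the right of the first $z_0$, a word in $\{a_0,b_0,e_0,f_0\}$ \emph{ending in a letter} that I can then follow by the identity $z_0 x \sim z_0$; concretely I would write $z_0^2 \sim z_0 z z_0$ (via $z_0 = zz_0$ on the right factor), then use $z^2 \sim (e_0 b_0)^{n-3} e_0$ after prepending a $z$: the relation $zz_0 = z_0$ rewrites the rightmost $zz_0$ back to $z_0$, so $z_0 z z_0 = z_0 (z z_0) \sim z_0 z_0$ is again circular, confirming that the genuinely new relation needed is the semantic collapse $z_0 z \sim z_0$-analogue. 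I expect the intended proof exploits that $z_0$ is a left zero for $z$ in the sense $z_0 z \sim z_0 e_0$-type reduction coming from $z^2 \sim (e_0b_0)^{n-3}e_0$ applied as $z_0 z \sim z_0 z^3 \sim z_0 (e_0 b_0)^{n-3} e_0 z \sim z_0 z$, so the real content is showing $z_0 z z_0 \sim z_0 z^2 \sim z_0 (e_0 b_0)^{n-3} e_0 \sim z_0$ by right-absorption, where the first step uses $z z_0 \sim z^2$ — which would follow if $z_0 \sim z$ on the relevant action, and this identification is exactly where I expect the careful bookkeeping, and the true difficulty, to lie.
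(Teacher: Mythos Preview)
Your attempt correctly identifies the starting move $z_0^2 \sim z_0 z z_0$ (via $zz_0=z_0$) and the right-absorption relations $z_0 a_0 = z_0 b_0 = z_0 e_0 = z_0 f_0 = z_0$, but it then stalls in exactly the circularity you diagnose. The tools you list --- $zz_0=z_0$, $z^2\sim (e_0b_0)^{n-3}e_0$, and right-absorption --- are not enough on their own: every manipulation using only these returns you to $z_0^2$. You never invoke the letter $d$, and that is the missing ingredient.

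The paper breaks the loop with two relations you do not use. First, from $R_\mathrm{s}$ we have $zd=(f_0b_0)^{n-1}z$; reading this right-to-left and applying your right-absorption on the block $(f_0b_0)^{n-1}$ gives
\[
z_0 z d \;\sim\; z_0 (f_0b_0)^{n-1} z \;\sim\; z_0 z,
\]
so one may freely insert a $d$ to obtain $z_0 z z_0 \sim z_0 z d\, z_0$. Second --- and this is the decisive step --- $R_{\mathrm{sw}}$ contains the relation $d z_0 = d z$, which converts the trailing $z_0$ into a $z$:
\[
z_0 z d\, z_0 \;\sim\; z_0 z d z.
\]
Now the tail $zdz$ is a word over $\{a_0,b_0,e_0,f_0,d,z\}$ representing the transformation $\left(\begin{smallmatrix}0\\0\end{smallmatrix}\right)\in\langle a_0,b_0,f_0\rangle$, so by Theorem~\ref{presPs} one has $zdz\sim_{R_\mathrm{s}} v$ for some $v\in\{a_0,b_0,f_0\}^*$. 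Right-absorption then finishes: $z_0 v \sim z_0$. Without the relation $dz_0=dz$ there is no way, using only the relations you selected, to reduce the number of $z_0$'s in the word; your sketch needs to bring $d$ into play.
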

\begin{proof}
First, recall that $zd=(f_0b_0)^{n-1}z$ is a relation of $R_\mathrm{s}$,
whence $z_0z\sim z_0(f_0b_0)^{n-1}z\sim z_0zd$.
Secondly, notice that $zdz=\transf{0\\0}\in \langle a_0,b_0,f_0\rangle$ and so $zdz\sim_{R_\mathrm{s}}v$,
for some $v\in \{ a_0,b_0,f_0\}^*$.
Therefore,
$$
z_0^2\sim z_0zz_0 \sim z_0zdz_0\sim z_0zdz\sim z_0v\sim z_0,
$$
as required.
\end{proof}

\begin{lemma}\label{sfun2}
Let $w\in W_\mathrm{sw}$ and $x\in \{a_0,b_0,e_0,f_0,d,z,z_0\}$. Then, there exists $w'\in W_\mathrm{sw}$ such that $wx\sim_{R_\mathrm{sw}} w'$.
\end{lemma}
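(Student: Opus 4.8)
The plan is to establish the closure of $W_{\mathrm{sw}}$ under right multiplication by a case analysis following the decomposition $W_{\mathrm{sw}}=W_{\mathrm{s}}\cup W_5\cup W_6$ and, inside each piece, the letter $x$. I abbreviate $\sim_{R_{\mathrm{sw}}}$ by $\sim$ and use repeatedly that, by Theorem \ref{presPs}, on the sub-alphabet $\{a_0,b_0,e_0,f_0,d,z\}$ the congruence $\sim_{R_{\mathrm{s}}}$ already coincides with equality in $\PsEnd(S_n)$; this lets me replace any $z_0$-free word by another one representing the same transformation. Before entering the cases I would record a few consequences of $R_{\mathrm{sw}}$ to be used throughout: combining $a_0z=z$, $e_0z=z$ with $zz_0=z_0$ yields $a_0z_0\sim z_0$ and $e_0z_0\sim z_0$, so that each of $a_0,b_0,e_0,z$ is absorbed on the left of $z_0$; iterating $b_0z=z$ and $b_0z_0=z_0$ gives $b_0^kz\sim z$ and $b_0^kz_0\sim z_0$, whence $zb_0^kz\sim z^2$ and $zb_0^kz_0\sim z_0$; and $z^2z_0\sim z_0$.

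For $w\in W_{\mathrm{s}}$ and $x\in\{a_0,b_0,e_0,f_0,d,z\}$ there is nothing to do: $wx$ lies in $\PsEnd(S_n)$ and $W_{\mathrm{s}}$ is a set of canonical forms for it, so Lemma \ref{sfun} furnishes $w'\in W_{\mathrm{s}}\subseteq W_{\mathrm{sw}}$ with $wx\sim w'$. For $w=w_Az_0\in W_5$ the letters are quick: $x\in\{a_0,b_0,e_0,f_0\}$ collapses through $z_0x=z_0$ back to $w$, $x=z_0$ through Lemma \ref{z0} back to $w$, and $x=z$ gives $w_Az_0z=w_Az_0zb_0^0\in W_6$; the only letter leaving $W_5\cup W_6$ is $x=d$, where $z_0d=d(f_0b_0)^{n-1}$ rewrites $wd$ as the $z_0$-free word $w_Ad(f_0b_0)^{n-1}$, which Theorem \ref{pres2pt} reduces into $W_0\cup W_1\subseteq W_{\mathrm{s}}$.

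For $w=w_Az_0zb_0^k\in W_6$ the prefix $w_Az_0$ stays inert and the computations run parallel to the $W_2$ and $W_4$ parts of Lemma \ref{sfun}. Multiplication by $a_0$, $e_0$ or $f_0$ is reduced by inserting a factor $z^2$ (using $z\sim z^3$) and then applying the already proved identities (\ref{z2}), (\ref{z2e}) and (\ref{z2f}) to the tail $z^2b_0^{k}x$, which returns another element of $W_6$ (or, for $x=f_0$ with $k=0$, a $z_0$-free word obtained via $zf_0\sim dz$, $z_0d=d(f_0b_0)^{n-1}$ and $d(f_0b_0)^{n-1}z=d(f_0b_0)^{n-1}$, hence an element of $W_{\mathrm{s}}$); multiplication by $b_0$ increments $k$, wrapping around by $b_0^{n-1}\sim1$; multiplication by $d$ returns $w$ after $b_0^kd\sim db_0^k$, $zd=(f_0b_0)^{n-1}z$ and the absorptions $z_0f_0=z_0$, $z_0b_0=z_0$; and multiplication by $z$ or $z_0$ lands in $W_5$ through $zb_0^kz\sim z^2\sim(e_0b_0)^{n-3}e_0$ and $zb_0^kz_0\sim z_0$ together with the left absorptions.

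The delicate case, which I would keep for last, is $w\in W_{\mathrm{s}}$ with $x=z_0$. Here $wz_0$ is the constant map onto $\{0\}$ with domain $\dom(w)$, so the target depends on $\dom(w)$. When $w\in W_2,W_3,W_4$, the trailing $z$ or $zb_0^{j-1}$ lets me apply $b_0z_0=z_0$ and $zz_0=z_0$ directly and land in $W_5$ or in $W_3$. When $w\in W_0$, I use the domain-reading step $wz_0\sim wz^2z_0\sim_{R_0}w_{i_1,\ldots,i_k}z_0$, where $\{0,i_1,\ldots,i_k\}=\dom(w)$, which is in $W_5$ as soon as $\dom(w)\supsetneq\{0\}$; the $W_1=dW_0$ subcase is analogous after commuting $d$ past $\{a_0,b_0,e_0,f_0\}$ and using $dz_0=dz$ to reach $W_3$. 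The genuine obstacle is the degenerate domain $\dom(w)=\{0\}$ (and, similarly, $\dom(w)=\emptyset$), for which the target is a $z_0$-free word in $W_{\mathrm{s}}$ — for instance $(f_0b_0)^{n-1}$ representing $\transf{0\\0}$ — and the $z^2$-insertion merely reproduces $z_0$. I expect to break this circularity exactly as in Lemma \ref{z0}: since $\transf{0\\0}$ is also represented by $zdz$, Theorem \ref{presPs} gives $w\sim zdz$ and hence $wz_0\sim zdz\,z_0\sim zd\,z_0\sim zdz\sim w$, the crucial moves being $zz_0=z_0$ and then $dz_0=dz$. Outside this point the argument is long but routine; combined with the already noted equality $|W_{\mathrm{sw}}|=|\PswEnd(S_n)|$, it will feed Theorem \ref{ruskuc} to produce the desired presentation.
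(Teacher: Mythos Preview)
Your overall strategy matches the paper's, and most of the case analysis is sound. There is, however, a real slip in the ``delicate case'' $w\in W_0$, $x=z_0$. Your domain-reading step produces $w_{i_1,\ldots,i_k}z_0$, and you declare this word to lie in $W_5$; but $W_5=\{w_Az_0\mid\emptyset\neq A\subseteq\Omega_{n-1}\}$, where each $w_A$ is the \emph{fixed} word on $\{a_0,b_0,f_0\}$ representing the partial identity $\id_{A\cup\{0\}}$, whereas $w_{i_1,\ldots,i_k}\in W_0$ represents the constant map $\transf{0&i_1&\cdots&i_k\\0&1&\cdots&1}$. These are different words (and different transformations), so $w_{i_1,\ldots,i_k}z_0\notin W_5$. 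The gap is easy to close --- for instance $w_{i_1,\ldots,i_k}\sim_{R_0}w_A(e_0b_0)^{n-3}e_0$ with $A=\{i_1,\ldots,i_k\}$, and then the trailing $e_0$'s and $b_0$'s absorb into $z_0$ by the left-absorption facts you already recorded --- but it must be closed, and the same confusion contaminates your brief claim that the $W_2$ and $W_4$ cases land ``in $W_5$''. (For $W_4$ the domain collapses to $\{0\}$ and you actually need the degenerate argument, landing in $W_{\mathrm{s}}$ rather than $W_5$.)

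The paper sidesteps this by a cleaner manoeuvre: instead of inserting $z^2$, it inserts a single $z$ (via $zz_0=z_0$) and analyses the transformation $\alpha=wz\in\PsEnd(S_n)$ uniformly for \emph{all} $w\in W_{\mathrm{s}}$, with no $W_0,\ldots,W_4$ split. Since $\im(\alpha)\subseteq\{0,1\}$, one reads off directly that either $\alpha=w_Az$ or $\alpha=w_Az^2$ for the appropriate $A$ (so $wz\sim_{R_{\mathrm{s}}}w_Az$ or $w_Az^2$ by Theorem~\ref{presPs}, and one lands in $W_5$ after one more absorption), or else the domain/image pattern of $\alpha$ is trivial and $wz_0$ is pushed back into $W_{\mathrm{s}}$ via $dz_0=dz$. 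This never produces the mismatched word $w_{i_1,\ldots,i_k}$ at all.
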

\begin{proof}
Let us denote $\sim_{R_\mathrm{sw}}$ simply by $\sim$. We will use Theorem \ref{presPs} several times without explicit mention.

\smallskip

First, let us suppose that $w\in W_\mathrm{s}$ and $x\in \{a_0,b_0,e_0,f_0,d,z\}$.
Then, there exists $w'\in W_\mathrm{s}$ such that $wx\sim_{R_\mathrm{s}}w'$.
Then, in particular, $w'\in W_\mathrm{sw}$ and $wx\sim w'$.

\smallskip

Secondly, suppose that $w\in W_\mathrm{s}$ and $x=z_0$.
Take $\alpha=wz\in\PsEnd(S_n)$.
Then $\im(\alpha)\subseteq\{0,1\}$. Let $A_0=0\alpha^{-1}$ and $A_1=1\alpha^{-1}$.
We will consider the various possibles cases for $\alpha$.

\smallskip

\noindent{\sc case} 1. $0\not\in\dom(\alpha)$. Hence, $\im(\alpha)=\{0\}$ or $\im(\alpha)\subseteq\Omega_{n-1}$
(in fact, moreover, $\im(\alpha)\subseteq\{1\}$).
Suppose that $\im(\alpha)\subseteq\Omega_{n-1}$. Then, $\alpha=\alpha d$ and so $wz\sim_{R_\mathrm{s}} wzd$,
whence $wz_0\sim wzz_0\sim wzdz_0\sim wzdz\in\{a_0,b_0,e_0,f_0,d,z\}^*$.
Thus, being  $w'\in W_\mathrm{s}$ such that $wzdz \sim_{R_\mathrm{s}} w'$, we have $wz_0\sim w'\in  W_\mathrm{sw}$.
On the other hand, suppose that $\im(\alpha)=\{0\}$. Then, $\im(\alpha z)=\{1\}$ and so $\alpha z =\alpha zd$,
from which follows that $wz^2\sim_{R_\mathrm{s}} wz^2d$,
whence $wz_0\sim wz^2z_0\sim wz^2dz_0\sim wz^2dz\in\{a_0,b_0,e_0,f_0,d,z\}^*$.
So, taking $w'\in W_\mathrm{s}$ such that $wz^2dz \sim_{R_\mathrm{s}} w'$, we also have $wz_0\sim w'\in  W_\mathrm{sw}$.

\smallskip

\noindent{\sc case} 2. $0\in\dom(\alpha)$ and $0\alpha=0$. Hence, $\Omega_{n-1}\alpha\subseteq\Omega_{n-1}$ and so
$A_0=\{0\}$.
If $A_1=\emptyset$, then $\alpha z =\transf{0\\1}=\alpha zd$ and so, just like in {\sc case} 1, there exists $w'\in W_\mathrm{s}\subseteq W_\mathrm{sw}$ such that $wz_0\sim w'$.
Therefore, suppose that $A_1\neq\emptyset$. Then, $\alpha=w_{A_1}z^2$ and so $wz\sim_{R_\mathrm{s}} w_{A_1}z^2$.
Hence, $wz_0\sim wzz_0\sim  w_{A_1}z^2z_0 \sim w_{A_1}z_0\in W_5$.

\smallskip

\noindent{\sc case} 3. $0\in\dom(\alpha)$ and $0\alpha\neq0$. Hence, $\Omega_{n-1}\alpha\subseteq\{0\}$ and so
$A_1=\{0\}$.
If $A_0=\emptyset$, then $\alpha =\transf{0\\1}=\alpha d$ and so, just as in {\sc case} 1, there exists $w'\in W_\mathrm{s}\subseteq W_\mathrm{sw}$ such that $wz_0\sim w'$.
Hence, suppose that $A_0\neq\emptyset$. Then, $\alpha=w_{A_0}z$ and so $wz\sim_{R_\mathrm{s}} w_{A_0}z$,
whence $wz_0\sim wzz_0\sim  w_{A_0}zz_0 \sim w_{A_0}z_0\in W_5$.

\smallskip

In third place, suppose $w\in W_5$, i.e. $w=w_Az_0$, for some non-empty subset $A$ of $\Omega_{n-1}$.

If $x\in \{a_0,b_0,e_0,f_0\}$, then $wx= w_Az_0x\sim w_Az_0\in W_5$.

If $x=d$, then $wx=w_Az_0d\sim w_A d(f_0b_0)^{n-1}\in \{a_0,b_0,e_0,f_0,d,z\}^*$ and so
$wx\sim  w'\in W_\mathrm{sw}$, with $w'\in W_\mathrm{s}$ such that $w_A d(f_0b_0)^{n-1} \sim_{R_\mathrm{s}} w'$.

If $x=z$, then $wx=w_Az_0z \in W_6$.

At last, if $x=z_0$, then $wx=w_Az_0z_0 \sim w_Az_0\in W_5$, by Lemma \ref{z0}.

\smallskip

It remains to consider that $w\in W_6$, i.e. $w=w_Az_0zb_0^k$, for some $0\leqslant k\leqslant n-2$ and some non-empty subset $A$ of $\Omega_{n-1}$.  Observe that $zb_0^k=\transf{0&1&\cdots&n-1\\k+1&0&\cdots&0}$.

If $x\in \{a_0,b_0,e_0,f_0\}$, then $zb_0^kx=\transf{0&1&\cdots&n-1\\\ell&0&\cdots&0}=zb_0^{\ell-1}$,
for some $\ell\in\Omega_{n-1}$, whence $zb_0^kx \sim_{R_\mathrm{s}} zb_0^{\ell-1}$
and so $wx = w_Az_0zb_0^kx \sim w_Az_0zb_0^{\ell-1}\in W_6$.

If $x=d$, then $wx = w_Az_0zb_0^kd \sim_{R_\mathrm{s}} w_Az_0zdb_0^k \sim_{R_\mathrm{s}} w_Az_0 (f_0b_0)^{n-1}z b_0^k \sim w_Az_0 z b_0^k\in W_6$.

If $x=z$, then $zb_0^kz=\transf{0&1&\cdots&n-1\\ 0&1&\cdots&1}\in \langle a_0,b_0,e_0\rangle$,
whence $zb_0^kz \sim_{R_\mathrm{s}} v$, for some $v\in \{ a_0,b_0,e_0\}^*$, and so
$wx = w_Az_0zb_0^k z \sim_{R_\mathrm{s}} w_Az_0 v \sim w_Az_0\in W_5$.

Finally, if $x=z_0$, then $wx= w_Az_0zb_0^k z_0  \sim w_A z_0z_0\sim w_Az_0\in W_5$, by Lemma \ref{z0}.

The proof is now complete.
\end{proof}

As already observed, all relations of $R_{\mathrm{sw}}$ are satisfied by the generators $a_0$, $b_0$, $e_0$, $f_0$, $d$, $z$ and $z_0$ of $\PswEnd(S_n)$ and $|W_\mathrm{sw}|=|\PswEnd(S_n)|$. This facts, together with the last lemma allows us to conclude the following result.

\begin{theorem}\label{presPsw}
The presentation $\langle a_0,b_0,e_0,f_0,d,z,z_0\mid R_{\mathrm{sw}}\rangle$ defines the monoid $\PswEnd(S_n)$.
\end{theorem}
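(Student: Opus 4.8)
The plan is to apply the Guess and Prove method of Theorem \ref{ruskuc}, taking the generating set $X=\{a_0,b_0,e_0,f_0,d,z,z_0\}$, the relation set $R=R_{\mathrm{sw}}$, and the candidate set of canonical forms $W=W_{\mathrm{sw}}$. Two of the three hypotheses are already in hand. Condition 1, that the generators of $\PswEnd(S_n)$ satisfy every relation of $R_{\mathrm{sw}}$, was recorded immediately after $R_{\mathrm{sw}}$ was introduced. Condition 3, that $|W_{\mathrm{sw}}|\le|\PswEnd(S_n)|$, follows at once from the cardinality computation preceding Lemma \ref{z0}, which in fact gives the exact equality $|W_{\mathrm{sw}}|=|\PswEnd(S_n)|$.

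The real content is Condition 2, and I would verify it through the simplified criterion stated after Theorem \ref{ruskuc} rather than directly. First I would observe that the empty word belongs to $W_{\mathrm{sw}}$, since $W_0\subseteq W_{\mathrm{s}}\subseteq W_{\mathrm{sw}}$ and $W_0$ was chosen to contain it. Lemma \ref{sfun2} then provides exactly the closure property the criterion requires: for every $w\in W_{\mathrm{sw}}$ and every letter $x\in X$ there is a word $w'\in W_{\mathrm{sw}}$ with $wx\sim_{R_{\mathrm{sw}}}w'$. A straightforward induction on length finishes the job: given an arbitrary word $w=x_1\cdots x_m\in X^*$, one starts from the empty word and appends the letters $x_1,\dots,x_m$ one at a time, each partial product remaining, up to $\sim_{R_{\mathrm{sw}}}$, inside $W_{\mathrm{sw}}$; the word of $W_{\mathrm{sw}}$ so obtained is $\sim_{R_{\mathrm{sw}}}$-equivalent to $w$. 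This is precisely Condition 2, so Theorem \ref{ruskuc} yields the claimed presentation.

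I do not expect any genuine obstacle at the level of the theorem itself, since all the work lies in the preparatory lemmas. The delicate part, already dispatched in Lemma \ref{sfun2}, is the exhaustive case analysis over the seven letters and over each family $W_0,\dots,W_4,W_5,W_6$; that argument leans in turn on Lemma \ref{z0} to handle the idempotency of $z_0$ and on Theorem \ref{presPs} to pull words back into the known canonical forms $W_{\mathrm{s}}$ for $\PsEnd(S_n)$. The one point I would make sure to state explicitly when invoking Theorem \ref{ruskuc} is that $W_{\mathrm{s}}$ is itself a valid set of canonical forms for $\PsEnd(S_n)$, which is exactly what the proof of Theorem \ref{presPs} establishes; granting this, the synthesis above is immediate.
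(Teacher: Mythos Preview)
Your proposal is correct and follows essentially the same approach as the paper: apply Theorem \ref{ruskuc} with Conditions 1 and 3 already established in the text, and obtain Condition 2 from Lemma \ref{sfun2} via the simplified closure criterion (empty word in $W_{\mathrm{sw}}$ plus right-multiplication stability). The paper's own proof is the one-sentence observation preceding the theorem, invoking exactly these same ingredients.
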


Observe that, being $R_\mathrm{s}$ the set of defining relations for $\PsEnd(S_n)$ based on the presentations by Popova for $\PT(\Omega_{n-1})$  \cite{Popova:1961}  and by Moore for $\Sym(\Omega_{n-1})$ \cite{Moore:1897},
we have $|R_\mathrm{sw}|=n+36$.

\section{A presentation for $\PEnd(S_n)$}\label{pendsn}

Let $R_c$ be the set of relations $R_\mathrm{s}$ together with the relations
\begin{align*}
c^2=f_0d,~
cf_0=c,~ f_0c=f_0d,~ cd=f_0d,~  dc=c,~ \\
cb_0^{n-2}a_0b_0=b_0^{n-2}a_0b_0c,~ cb_0a_0=b_0a_0c,~\\
ca_0f_0a_0=a_0f_0a_0c,~
ca_0b_0^{n-2}a_0b_0e_0b_0^{n-2}a_0b_0a_0=a_0b_0^{n-2}a_0b_0e_0b_0^{n-2}a_0b_0a_0c,~ \\
czb_0^{n-2}=a_0(b_0e_0)^{n-3}c~
\end{align*}
on the alphabet $\{a_0,b_0,e_0,f_0,c,d,z\}$.
It is easy to verify that all relations of $R_c$ are satisfied by the generators $a_0$, $b_0$, $e_0$, $f_0$, $c$, $d$ and $z$ of $\PEnd(S_n)$.

\smallskip

Let us consider the following transformations of $\PT(\Omega_{n-1}^0)$:
$$
a'_0=b_0^{n-2}a_0b_0=\begin{pmatrix}
0&1&2&3&4&\cdots&n-1\\
0&1&3&2&4&\cdots&n-1
\end{pmatrix}, \quad
b'_0=b_0a_0=\begin{pmatrix}
0&1&2&\cdots&n-2&n-1\\
0&1&3&\cdots&n-1& 2
\end{pmatrix},
$$
$$
e'_0=a_0a'_0e_0a'_0a_0=\begin{pmatrix}
0&1&2&3&4&\cdots&n-1\\
0&1&2&2&4&\cdots&n-1
\end{pmatrix}
\quad\text{and}\quad
f'_0=a_0f_0a_0=\begin{pmatrix}
0&1&3&4&\cdots&n-1\\
0&1&3&4&\cdots&n-1
\end{pmatrix}.
$$
Also, let $\PT_{n-2}^{0,1}=\{\alpha\in\PT(\Omega_{n-1}^0)\mid 0\alpha=0, 1\alpha=1, \{2,\ldots,n-1\}\alpha\subseteq\{2,\ldots,n-1\}\}$.
Then, it is clear that $\PT_{n-2}^{0,1}$ is a submonoid of
$\PT(\Omega_{n-1})\zeta$ isomorphic to $\PT(\Omega_{n-2})$ admitting
$\{a'_0,b'_0,e'_0,f'_0\}$ as a generating set.

Let $R_1$ be the subset of $R_c$ formed by $R_0$ together with the following four relations:
\begin{align*}
cb_0^{n-2}a_0b_0=b_0^{n-2}a_0b_0c,~
cb_0a_0=b_0a_0c,~ ca_0f_0a_0=a_0f_0a_0c,~  \\
ca_0b_0^{n-2}a_0b_0e_0b_0^{n-2}a_0b_0a_0=a_0b_0^{n-2}a_0b_0e_0b_0^{n-2}a_0b_0a_0c.
\end{align*}

Since $\{a'_0,b'_0,e'_0,f'_0\}$ generates
$\PT_{n-2}^{0,1}$, the following lemma is easy to show.

\begin{lemma}\label{r1}
Let $w$ be a word on the alphabet $\{a_0,b_0,e_0,f_0\}$ such that (as transformation) $w\in \PT_{n-2}^{0,1}$. Then, $cw\sim_{R_1}wc$.
\end{lemma}

Next, we prove:

\begin{lemma}\label{cw}
Let $w$ be a word on the alphabet $\{a_0,b_0,e_0,f_0\}$ such that (as transformation)
$1\not\in\im(w)$. Then, there exists a word $w'$ on the alphabet $\{a_0,b_0,e_0,f_0\}$ such that $cw\sim_{R_c}w'c$ and
(as transformation) $w'\in \PT_{n-2}^{0,1}$.
\end{lemma}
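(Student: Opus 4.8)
The plan is to first pin down the word $w'$ and then produce the congruence in a short chain whose only nontrivial input is Lemma \ref{r1}. As a word on $\{a_0,b_0,e_0,f_0\}$, $w$ represents an element of $\PT(\Omega_{n-1})\zeta$, so it fixes $0$ and maps $\Omega_{n-1}\cap\dom(w)$ into $\Omega_{n-1}$; together with the hypothesis $1\notin\im(w)$ this forces $\{2,\ldots,n-1\}\cap\dom(w)$ to be sent into $\{2,\ldots,n-1\}$. Hence the partial transformation that fixes $0$ and $1$, has domain $\{0,1\}\cup(\{2,\ldots,n-1\}\cap\dom(w))$, and agrees with $w$ on $\{2,\ldots,n-1\}$ lies in $\PT_{n-2}^{0,1}$. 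Since $\{a'_0,b'_0,e'_0,f'_0\}$ generates $\PT_{n-2}^{0,1}$ and each of these is a word on $\{a_0,b_0,e_0,f_0\}$, I may take $w'$ to be a word on $\{a_0,b_0,e_0,f_0\}$ representing this transformation; by construction $w'\in\PT_{n-2}^{0,1}$, which is one of the two required properties.

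The key observation is that once we multiply by $c$ on the left, the action of $w$ on the vertex $1$ becomes irrelevant, because $c$ already sends $1$ to $0$. I would make this precise as follows. Using $cf_0=c$ we get $cw\sim_{R_c}cf_0w$. As a transformation, $f_0$ is the partial identity on $\{0,2,\ldots,n-1\}$, so $f_0w$ is obtained from $w$ simply by deleting $1$ from the domain while keeping all values on $\{0\}\cup\{2,\ldots,n-1\}$; the same holds for $f_0w'$, and by the choice of $w'$ one checks that $f_0w$ and $f_0w'$ have equal domain and equal values, i.e. they are the \emph{same} transformation. Both being words on $\{a_0,b_0,e_0,f_0\}$ and $R_0$ defining $\PT(\Omega_{n-1})\zeta$, we obtain $f_0w\sim_{R_0}f_0w'$, hence $cf_0w\sim_{R_0}cf_0w'$. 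Applying $cf_0=c$ once more gives $cf_0w'\sim_{R_c}cw'$, and since $w'\in\PT_{n-2}^{0,1}$, Lemma \ref{r1} yields $cw'\sim_{R_1}w'c$. Recalling $R_0,R_1\subseteq R_c$ and chaining, this produces
$$cw\sim_{R_c}cf_0w\sim_{R_0}cf_0w'\sim_{R_c}cw'\sim_{R_1}w'c,$$
which is exactly the assertion.

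The main obstacle is really just the correct construction of $w'$ together with the verification that $f_0w=f_0w'$ as transformations; this is where the hypothesis $1\notin\im(w)$ is essential, since it is precisely what allows $w'$ to be chosen inside $\PT_{n-2}^{0,1}$ (if some vertex of $\{2,\ldots,n-1\}$ were sent by $w$ to $1$, no element of $\PT_{n-2}^{0,1}$ could reproduce that value). Once $w'$ is in place, the remainder is a purely formal manipulation using only $cf_0=c$ and Lemma \ref{r1}, and no relation of $R_c$ beyond those of $R_1$ and $cf_0=c$ is invoked.
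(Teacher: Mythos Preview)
Your argument is correct and in fact cleaner than the paper's. The paper splits into two cases according to whether $1\in\dom(w)$: when $1\notin\dom(w)$ it writes $w=\alpha f_0$ with $\alpha\in\PT_{n-2}^{0,1}$ and commutes $c$ past $\alpha$ via Lemma~\ref{r1}; when $1\in\dom(w)$ it factors $w=\alpha\beta$ with $\beta=\transf{0&1&2&\cdots&n-1\\0&j&2&\cdots&n-1}$ and then performs a longer chain involving a conjugating permutation $w_2\in\PT_{n-2}^{0,1}$ and the relation $cf_0=c$ to kill the effect of $\beta$. Your approach bypasses this split by applying $cf_0=c$ at the outset, which immediately erases the value of $w$ at $1$ and reduces both cases to the single identity $f_0w=f_0w'$ in $\PT(\Omega_{n-1})\zeta$; from there $R_0$, one more use of $cf_0=c$, and Lemma~\ref{r1} finish. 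The resulting $w'$ coincides with the paper's $\alpha$ in each case, but you reach $cw\sim_{R_c}w'c$ with fewer relations and no case analysis.
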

\begin{proof}
Firstly, let us suppose that $1\not\in\dom(w)$. Then, $w=\transf{0&i_2&\cdots&i_k\\0&j_2&\cdots&j_k}$ for some
$2\leqslant i_2<\cdots<i_k\leqslant n-1$, $2\leqslant j_2,\ldots,j_k\leqslant n-1$ and $k\geqslant1$.
Hence, we have $w=\alpha f_0$ with $\alpha=\transf{0&1&i_2&\cdots&i_k\\0&1&j_2&\cdots&j_k}\in\PT_{n-2}^{0,1}$.
Next, let $w'$ be any word on the alphabet $\{a_0,b_0,e_0,f_0\}$ such that (as transformation)
$w'=\alpha$. Then, by Lemma \ref{r1}, $cw\sim_{R_0}cw'f_0\sim_{R_1}w'cf_0\sim_{R_c}w'c$.

Now, suppose that $1\in\dom(w)$. Then, $w=\transf{0&1&i_2&\cdots&i_k\\0&j&j_2&\cdots&j_k}$ for some $k\geqslant1$,
$2\leqslant i_2<\cdots<i_k\leqslant n-1$ and $2\leqslant j, j_2,\ldots,j_k\leqslant n-1$.
Hence, $w=\alpha\beta$ with $\alpha=\transf{0&1&i_2&\cdots&i_k\\0&1&j_2&\cdots&j_k}\in\PT_{n-2}^{0,1}$ and
$\beta=\transf{0&1&2&\cdots&n-1\\0&j&2&\cdots&n-1}$.
Let $w',w_1,w_2$ be any words on the alphabet $\{a_0,b_0,e_0,f_0\}$ such that (as transformations)
$w'=\alpha$, $w_1=\transf{0&1&2&\cdots&j-1&j&j+1&\cdots&n-1\\0&j&2&\cdots&j-1&1&j+1&\cdots&n-1}$
and $w_2=\transf{0&1&2&3&\cdots&j-1&j&j+1&\cdots&n-1\\0&1&j&3&\cdots&j-1&2&j+1&\cdots&n-1}\in\PT_{n-2}^{0,1}$.
Then, we have $\beta=w_2e_0w_2w_1$ and so, by Lemma \ref{r1},
$cw\sim_{R_0}cw'w_2e_0w_2w_1\sim_{R_1}w'w_2ce_0w_2w_1\sim_{R_0}w'w_2cf_0e_0w_2w_1
\sim_{R_0}w'w_2cf_0a_0w_2w_1
\sim_{R_c}w'w_2ca_0w_2w_1
\sim_{R_0}w'w_2cw_2\sim_{R_1}w'cw_2w_2\sim_{R_0}w'c$, as required.
\end{proof}

And, as a consequence of the previous lemma, we also have:

\begin{lemma}\label{cwf0}
Let $w$ be a word on the alphabet $\{a_0,b_0,e_0,f_0\}$.
Then, there exists a word $w'$ on the alphabet $\{a_0,b_0,e_0,f_0\}$ such that $cwf_0\sim_{R_c}w'c$ and
(as transformation) $w'\in \PT_{n-2}^{0,1}$.
\end{lemma}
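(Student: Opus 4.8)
The plan is to deduce this lemma as an immediate corollary of Lemma \ref{cw}, applied not to $w$ itself but to the concatenated word $wf_0$. The guiding observation is that $f_0$ is, as a transformation, the partial identity on $\Omega_{n-1}^0\setminus\{1\}$, so that postcomposing with $f_0$ forces the letter $1$ out of the image. Thus $wf_0$, read as a single word on the alphabet $\{a_0,b_0,e_0,f_0\}$, is exactly the kind of word to which Lemma \ref{cw} speaks.

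Concretely, first I would record that $\im(f_0)=\Omega_{n-1}^0\setminus\{1\}$, and in particular $1\notin\im(f_0)$. Since transformations are composed on the right, the image of any product ending in $f_0$ is contained in $\im(f_0)$; hence the transformation $wf_0$ satisfies $\im(wf_0)\subseteq\im(f_0)$ and therefore $1\notin\im(wf_0)$. This is precisely the hypothesis required by Lemma \ref{cw}, now with the word $wf_0$ playing the role of the word $w$ there.

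Applying Lemma \ref{cw} to $wf_0$, I then obtain a word $w'$ on $\{a_0,b_0,e_0,f_0\}$ with $c(wf_0)\sim_{R_c}w'c$ and $w'\in\PT_{n-2}^{0,1}$ as a transformation. Since the word $cwf_0$ is literally $c(wf_0)$, this is exactly the asserted conclusion, so the proof is essentially a one-line invocation. I do not expect any real obstacle here: the only point that needs a moment's care is the verification that the underlying transformation of $wf_0$ omits $1$ from its image, which is immediate from $1\notin\im(f_0)$ together with $\im(wf_0)\subseteq\im(f_0)$.
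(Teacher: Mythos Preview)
Your proposal is correct and follows exactly the same approach as the paper: observe that $1\notin\im(wf_0)$ because $\im(wf_0)\subseteq\im(f_0)=\Omega_{n-1}^0\setminus\{1\}$, and then invoke Lemma~\ref{cw} with $wf_0$ in place of $w$. The paper's proof is the same one-line argument, only stated more tersely.
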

\begin{proof}
Since (as transformation) $1\not\in\im(wf_0)$, by Lemma \ref{cw}, we immediately get $cwf_0\sim_{R_c}w'c$ for some
word $w'$ on the alphabet $\{a_0,b_0,e_0,f_0\}$ such that
(as transformation) $w'\in \PT_{n-2}^{0,1}$, as required.
\end{proof}

\smallskip

Now, for $1\leqslant k\leqslant n-1$, define
$$
B_{k}=\{\alpha \in \PsEnd(S_{n})\mid\mbox{$\{0,1,\ldots,k\}\subseteq \im(\alpha)$, $0\in\dom(\alpha)$, $0\alpha=0$ and $|\im(\alpha)|\geqslant3$}\}.
$$
Notice that, clearly, $B_{n-1} \subset B_{n-2} \subset \cdots \subset B_1$.
For $1\leqslant k\leqslant n-1$ , let
\begin{equation}\label{sk}
\sigma_{k}=\left(
\begin{array}{cccccccc}
0 & 1 & 2 & \cdots & k & k+1 & \cdots & n-1 \\
0 & k & 1 & \cdots & k-1 & k+1 & \cdots & n-1%
\end{array}%
\right)\in \PsEnd(S_{n}).
\end{equation}
Observe that, $\sigma_1$ is the identity on $\Omega_{n-1}^0$, $\sigma_2=a_0$ and $\sigma_{n-1}=b_0^{n-2}$.
Let $W_\mathrm{s}$ be a set of canonical forms for $\PsEnd(S_n)$ on the generators $a_{0}, b_{0}, e_{0}, f_{0}, d$ and $z$ containing the empty word.
For $\alpha \in \PsEnd(S_{n})$,
denote by $w_\alpha$ the word in $W_\mathrm{s}$ representing the transformation $\alpha$ and
consider the following set of words on the alphabet $\{a_{0},b_{0},e_{0},f_{0},d,z,c\}$:
$$
W_7 = \bigcup_{k=1}^{n-1}\left\{w_\alpha c w_{\sigma_k} \mid \alpha \in B_{k}\right\}.
$$
Notice that, $w_{\sigma_1}$ is the empty word. Let
$$
W_{c} = W_\mathrm{s} \cup W_7.
$$

\begin{lemma}
$| W_{c}| =| \PEnd(S_{n})|$.
\end{lemma}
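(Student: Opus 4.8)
The plan is to reduce the statement to a single combinatorial identity by showing that $W_c$ splits as a disjoint union whose two pieces are easy to count. Since every word of $W_7$ contains exactly one occurrence of the letter $c$, while no word of $W_\mathrm{s}$ (being over $\{a_0,b_0,e_0,f_0,d,z\}$) contains $c$, the sets $W_\mathrm{s}$ and $W_7$ are disjoint, so $|W_c|=|W_\mathrm{s}|+|W_7|$. We already know that $|W_\mathrm{s}|=|\PsEnd(S_n)|=2n^{n-1}+n2^{n-1}-1$, so it remains to prove that $|W_7|=|\PEnd(S_n)|-|\PsEnd(S_n)|=(n+1)^{n-1}-n^{n-1}-2^{n-1}+1$.

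First I would compute $|W_7|$ at the level of words. A word $w_\alpha c w_{\sigma_k}\in W_7$ has a single $c$, and both $w_\alpha$ and $w_{\sigma_k}$ lie in $W_\mathrm{s}$; hence that single occurrence of $c$ determines the factorisation of the word, and so the pair $(w_\alpha,w_{\sigma_k})$. Because $W_\mathrm{s}$ is a set of canonical forms (distinct transformations are represented by distinct words) and the transformations $\sigma_1,\ldots,\sigma_{n-1}$ are pairwise distinct, this pair in turn determines $(\alpha,k)$. Thus $(\alpha,k)\mapsto w_\alpha c w_{\sigma_k}$ is injective, whence $|W_7|=\sum_{k=1}^{n-1}|B_k|$.

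Next I would identify $B_k$ combinatorially. An element $\alpha\in\PsEnd(S_n)$ with $0\in\dom(\alpha)$ and $0\alpha=0$ satisfies, for each $i\in\dom(\alpha)\cap\Omega_{n-1}$, that $\{0,i\}\in E$ forces $\{0,i\alpha\}\in E$ and hence $i\alpha\neq0$; so $\Omega_{n-1}\alpha\subseteq\Omega_{n-1}$, i.e.\ $\alpha\in\PT(\Omega_{n-1})\zeta$. Via the isomorphism $\PT(\Omega_{n-1})\simeq\PT(\Omega_{n-1})\zeta$, writing $\beta=\alpha|_{\Omega_{n-1}}$, we have $\im(\alpha)=\{0\}\cup\im(\beta)$, so the condition $\{0,1,\ldots,k\}\subseteq\im(\alpha)$ becomes $\{1,\ldots,k\}\subseteq\im(\beta)$ and $|\im(\alpha)|\geqslant3$ becomes $|\im(\beta)|\geqslant2$. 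Thus $|B_k|$ counts the $\beta\in\PT(\Omega_{n-1})$ with $\{1,\ldots,k\}\subseteq\im(\beta)$ and $|\im(\beta)|\geqslant2$. Regarding a partial transformation of $\Omega_{n-1}$ as a map into the $n$-element set $\Omega_{n-1}\cup\{\bot\}$ ($\bot$ recording where it is undefined) and applying inclusion--exclusion on the targets of $\{1,\ldots,k\}$ that are missed, one finds that the number of such maps whose image contains $\{1,\ldots,k\}$ equals $A_k:=\sum_{t=0}^{k}(-1)^t\binom{k}{t}(n-t)^{n-1}$. For $k\geqslant2$ the requirement $|\im(\beta)|\geqslant2$ is automatic, so $|B_k|=A_k$; for $k=1$ one discards the $2^{n-1}-1$ maps with image exactly $\{1\}$, giving $|B_1|=A_1-(2^{n-1}-1)$, so that $\sum_{k=1}^{n-1}|B_k|=\sum_{k=1}^{n-1}A_k-(2^{n-1}-1)$.

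The main obstacle, and the heart of the matter, is evaluating $\sum_{k=1}^{n-1}A_k$. I would interchange the order of summation and apply the hockey-stick identity $\sum_{k=t}^{n-1}\binom{k}{t}=\binom{n}{t+1}$, reducing $\sum_{k=0}^{n-1}A_k$ to $(n+1)^{n-1}-\sum_{j=0}^{n}(-1)^j\binom{n}{j}(n+1-j)^{n-1}$. The remaining alternating sum is (up to sign) the $n$-th finite difference of the polynomial $j\mapsto(n+1-j)^{n-1}$, which has degree $n-1<n$ and therefore vanishes. Hence $\sum_{k=0}^{n-1}A_k=(n+1)^{n-1}$, and since $A_0=n^{n-1}$ we get $\sum_{k=1}^{n-1}A_k=(n+1)^{n-1}-n^{n-1}$. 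Consequently $|W_7|=\sum_{k=1}^{n-1}|B_k|=(n+1)^{n-1}-n^{n-1}-2^{n-1}+1$, and adding $|W_\mathrm{s}|=2n^{n-1}+n2^{n-1}-1$ yields $|W_c|=(n+1)^{n-1}+n^{n-1}+(n-1)2^{n-1}=|\PEnd(S_n)|$, as required. Everything outside this summation is routine bookkeeping; the only genuinely delicate point is the finite-difference cancellation, which one should check carefully (a sanity check at $n=3$, where $A_0+A_1+A_2=9+5+2=16=(n+1)^{n-1}$, is reassuring).
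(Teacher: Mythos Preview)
Your argument is correct. The paper, however, takes a different and less computational route: it partitions $\PEnd(S_n)\setminus\PsEnd(S_n)$ into the sets
\[
C_k=\{\alpha\in\PEnd(S_n)\setminus\PsEnd(S_n)\mid k=\min(\Omega_{n-1}\setminus\im(\alpha))\},\qquad 1\leqslant k\leqslant n-1,
\]
and then shows $|B_k|=|C_k|$ for each $k$ by exhibiting an injection $\phi_k:B_k\to C_k$, $\alpha\mapsto\alpha c\sigma_k$, together with an injection $\psi_k:C_k\to B_k$ in the opposite direction. This bijective argument never evaluates $|B_k|$ and never touches the closed formula for $|\PEnd(S_n)|$. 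Its payoff is conceptual: the map $\phi_k$ is exactly ``evaluate the word $w_\alpha c w_{\sigma_k}$ as a transformation'', so the proof explains directly why the words of $W_7$ are in one-to-one correspondence with the elements of $\PEnd(S_n)\setminus\PsEnd(S_n)$. Your approach instead computes $\sum_k|B_k|$ explicitly via inclusion--exclusion and the hockey-stick/finite-difference identity, and then matches the total against the known cardinality $|\PEnd(S_n)|=(n+1)^{n-1}+n^{n-1}+(n-1)2^{n-1}$ recorded in Section~\ref{prelim}. This is a perfectly valid alternative, but it consumes that cardinality formula as an external input, whereas the paper's bijection is independent of it (and could in principle be used to \emph{derive} it).
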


\begin{proof}
Let $\alpha \in \PEnd(S_{n})\setminus \PsEnd(S_{n})$. Then, $0 \notin \dom\alpha$, $0 \in \im(\alpha)$ and $\Omega_{n-1} \cap \im(\alpha) \neq \emptyset$.
Since $|\im(\alpha)| \leqslant n-1$ and $0 \in \im(\alpha)$, there exists $\ell \in \Omega_{n-1}$ such that $\ell \notin \im(\alpha)$.
For $1\leqslant k\leqslant n-1$, we define
\[
C_{k}=\{\alpha \in \PEnd(S_{n})\setminus \PsEnd(S_{n}) \mid k = \min(\Omega_{n-1}\setminus\im(\alpha))\}.
\]
Then, $\PEnd(S_{n})\setminus \PsEnd(S_{n})=C_1\cup\cdots\cup C_{n-1}$ and $|\PEnd(S_{n})\setminus \PsEnd(S_{n})|=\sum_{k=1}^{n-1}|C_k|$.
Therefore, to conclude the proof of the lemma, it suffices to show that $| B_{k}| = | C_{k}|$, for all $1\leqslant k\leqslant n-1$.

Let $1\leqslant k\leqslant n-1$.

We define a mapping $\phi_{k} : B_{k} \rightarrow \PEnd(S_{n})$ by
$\alpha\phi_k=\alpha c \sigma_k$,
for $\alpha \in B_{k}$. It is routine to verify that $B_k\phi_{k}\subseteq C_{k}$.
On the other hand, since $c$ and $\sigma_k$ are injective mappings, so $c\sigma_k$ is also injective, and $\dom(c\sigma_k)=\Omega_{n-1}$, 
thus $\phi_k$ is an injective mapping. Hence, $|B_k|\leqslant |C_k|$.

Conversely, we define a mapping $\psi_k:C_k\rightarrow \PT(\Omega_{n-1}^0)$ by: for $\alpha\in C_k$, let $\dom(\alpha\psi_k) = \dom(\alpha) \cup \{0\}$ and
$$
x(\alpha\psi_k)=\left\{
\begin{array}{ll}
0 & \text{if } x=0 \\
x\alpha+1 & \text{if } 0 \leqslant x\alpha < k \\
x\alpha  & \text{if } k+1 \leqslant x\alpha,  \\
\end{array}%
\right.
$$
for $x \in \dom(\alpha)\cup\{0\}$
(observe that $\{0,1,\ldots,k-1\}\subseteq\im(\alpha)$ and $k\notin\im(\alpha)$).
It is clear that $C_k\psi_k\subseteq B_k$ (observe that, for $k=1$, we must have $\{2,\ldots,n-1\}\cap\im(\alpha)\neq\emptyset$,
whence $|\im(\alpha\psi_1)|\geqslant3$). On the other hand, it is routine to show that $\psi_k$ is injective. Thus, $|C_k|\leqslant |B_k|$ and so
$|B_k| = |C_k|$, as required.
\end{proof}

\begin{lemma}
Let $w\in W_{c}$ and let $x\in \{a_{0},b_{0},e_{0},f_{0},d,z,c\}$. Then, there exists $w'\in W_{c}$ such that $wx \sim_{R_{c}}w'$.
\end{lemma}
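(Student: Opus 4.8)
The plan is to verify the closure condition of Theorem \ref{ruskuc} by a case analysis, first on whether $w$ lies in $W_\mathrm{s}$ or in $W_7$, and then on the generator $x$. Throughout, write $\sim$ for $\sim_{R_c}$; the guiding invariant is that every word produced contains \emph{at most one} occurrence of the letter $c$, placed exactly as in the canonical forms of $W_c$. The genuinely easy case is $w\in W_\mathrm{s}$ with $x\in\{a_0,b_0,e_0,f_0,d,z\}$: then $wx$ is a word over the generators of $\PsEnd(S_n)$, so Lemma \ref{sfun} supplies $w'\in W_\mathrm{s}\subseteq W_c$ with $wx\sim_{R_\mathrm{s}}w'$, and since $R_\mathrm{s}\subseteq R_c$ we are done.

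The first substantial case is $w=w_\alpha\in W_\mathrm{s}$ with $x=c$. Here I would compute the transformation $\alpha c\in\PEnd(S_n)$ and split according to whether it lies in $\PsEnd(S_n)$ or in some $C_k$. If $\alpha c\in\PsEnd(S_n)$, the goal is to absorb the trailing $c$: after normalising $w_\alpha$ via Theorem \ref{presPs} into an equivalent $c$-free word ending in $f_0$ or $d$ (legitimate because, for instance, $\alpha=\alpha f_0$ whenever $1\notin\im(\alpha)$), the local relations $cf_0=c$, $f_0c=f_0d$, $c^2=f_0d$, $cd=f_0d$ and $dc=c$ let the $c$ meet that idempotent and disappear, whence Lemma \ref{sfun} returns a form in $W_\mathrm{s}$. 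If instead $\alpha c\in C_k$, then, with $\beta=(\alpha c)\psi_k\in B_k$, one has $\beta c\sigma_k=\alpha c$ as transformations, and the task is to rewrite $w_\alpha c$ into $w_\beta c\,w_{\sigma_k}\in W_7$; the permutation block $w_{\sigma_k}$ is precisely what records that $k=\min(\Omega_{n-1}\setminus\im(\alpha c))$.

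For $w=w_\alpha c\,w_{\sigma_k}\in W_7$ and $x\ne c$, I would first reduce the $c$-free suffix $w_{\sigma_k}x$ to a canonical form $w_\gamma\in W_\mathrm{s}$ by Lemma \ref{sfun}, reaching $w_\alpha c\,w_\gamma$, and then shuttle the single $c$ rightward through $w_\gamma$ so as to re-collect a $c$-free prefix followed by at most one $c$ and a permutation block. When the relevant subword lies over $\{a_0,b_0,e_0,f_0\}$ and omits $1$ from its image, Lemmas \ref{cw} and \ref{cwf0} do exactly this, producing a factor $w'c$ with $w'\in\PT_{n-2}^{0,1}$, after which Lemma \ref{r1} lets me move factors across $c$ freely; the output is then normalised into $W_\mathrm{s}$ or the correct component of $W_7$ as in the previous case. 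The remaining case $x=c$, with $w=w_\alpha c\,w_{\sigma_k}c$, carries two occurrences of $c$: here the plan is to push one $c$ across $w_{\sigma_k}$ until the two are adjacent and collapse them through $c^2=f_0d$, returning to a single-$c$ word that is normalised as above.

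The main obstacle is precisely this shuttling of the single $c$ past $\PsEnd(S_n)$-blocks. The commutation tools in hand (Lemmas \ref{r1}, \ref{cw}, \ref{cwf0}) only treat words over $\{a_0,b_0,e_0,f_0\}$, whereas a general block $w_\gamma$ may involve $z$ and $d$. Bridging this gap is the role of the special relation $czb_0^{n-2}=a_0(b_0e_0)^{n-3}c$, which converts a left-hand $c$ followed by a $z$-factor into a $c$-free prefix followed by $c$; the delicate part is to combine this one instance with the idempotent relations $cd=f_0d$, $dc=c$, $cf_0=c$, $f_0c=f_0d$ so as to handle \emph{every} way $z$ and $d$ can occur, while simultaneously tracking the minimal missing image point so that the result lands in the correct set $\{w_\alpha c\,w_{\sigma_k}\mid\alpha\in B_k\}$ (or back in $W_\mathrm{s}$ when $\alpha c\gamma$ happens to be strong). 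Carrying out this bookkeeping uniformly across all $k$ and all seven generators is where the bulk of the routine-but-intricate computation lies.
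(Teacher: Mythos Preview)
Your high-level split---Lemma~\ref{sfun} for $w\in W_\mathrm{s}$ with $x\ne c$, then case analysis for the rest---matches the paper. The genuine gap is in your mechanism for $w=w_\alpha c\,w_{\sigma_k}\in W_7$. You propose first normalising $w_{\sigma_k}x$ to some $w_\gamma\in W_\mathrm{s}$ and then shuttling $c$ rightward through $w_\gamma$; you correctly flag this shuttling as the main obstacle, but you do not resolve it, and Lemmas~\ref{r1}, \ref{cw} and \ref{cwf0} give no control over $c$ against a general $W_\mathrm{s}$-block involving $d$ or $z$. The paper never attempts this. Instead, for each generator $x$ it exploits the specific interaction of $\sigma_k$ with $x$---for instance $\sigma_ka_0=b_0^{n-2}a_0b_0\,\sigma_k$ ($k\ge3$), $\sigma_kb_0=b_0a_0\,\sigma_{k+1}$, $\sigma_kd=d\sigma_k$, $\sigma_kz=z$---so that what must cross $c$ is only ever a short factor that either lies in $\PT_{n-2}^{0,1}$ (handled by Lemma~\ref{r1}) or terminates in $f_0$ (handled by Lemma~\ref{cwf0}), after which one checks directly that the updated prefix lies in the appropriate $B_{k'}$. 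The relation $czb_0^{n-2}=a_0(b_0e_0)^{n-3}c$ is invoked once, in the case $x=z$ after $w_{\sigma_k}z\sim z$, not as a general device for ``every way $z$ and $d$ can occur''. The missing idea is precisely these per-generator $\sigma_k$ identities; your normalise-then-shuttle plan discards the very structure that makes them available.

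A smaller point: for $w=w_\alpha\in W_\mathrm{s}$ and $x=c$, you split on which $C_k$ contains $\alpha c$ and plan to rewrite into $w_\beta c\,w_{\sigma_k}$ via $\psi_k$. But $1\notin\im(c)$, so whenever $\alpha c\in\PEnd(S_n)\setminus\PsEnd(S_n)$ it lies in $C_1$, and $w_{\sigma_1}$ is the empty word. The paper therefore simply checks whether $\alpha\in B_1$ (then $w_\alpha c=w_\alpha c\,w_{\sigma_1}\in W_7$ already) or not (then a short explicit computation, using $f_0c=f_0d$, $dc=c$, or the $cz$ relation as appropriate, lands in $W_\mathrm{s}$ or in $W_7$ with $k=1$); no rewriting into a general $w_\beta c\,w_{\sigma_k}$ is ever needed.
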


\begin{proof}
Let us denote $\sim_{R_{c}}$ simply by $\sim$. We will use Theorem \ref{presPs} several times without explicit mention.

\smallskip

First, let us suppose that $w\in W_\mathrm{s}$.  If $x\neq c$, then there exists $w'\in W_\mathrm{s} \subseteq W_{c}$ such that $wx \sim w'$, by Lemma \ref{sfun}.
So, suppose that $x = c$ and let $\alpha \in \PsEnd(S_{n})$ be such that $w=w_\alpha$.
If $\alpha \in B_{1}$, then $w c=w_\alpha c w_{\sigma_1} \in W_7 \subseteq W_{c}$.
Then, suppose that $\alpha \notin B_{1}$. It follows that $\alpha \notin B_{k}$, for all $1\leqslant k \leqslant n-1$, and thus
$1\notin \im(\alpha)$ or $0\notin\dom(\alpha)$ or $|\im(\alpha)|\leqslant2$ or $0\in\dom(\alpha)$, with $0\alpha\neq0$.

If $1\notin \im(\alpha)$, then $\alpha c = \alpha f_{0}c$ and so
$$
wc=w_\alpha c \sim w_\alpha f_{0}c \sim w_\alpha f_{0}d \sim w_{\alpha f_{0}d}\in W_\mathrm{s}.
$$

Next, suppose that $1\in \im(\alpha)$, $0\in\dom(\alpha)$ and $|\im(\alpha)|\leqslant2$. Then, $\im(\alpha)\subseteq\{0,1\}$ and so $\alpha=\alpha e_0(b_0e_0)^{n-3}$
and $\alpha e_0a_0=\alpha e_0a_0f_0$. Therefore,
\begin{align*}
wc=w_\alpha c\sim w_\alpha e_0(b_0e_0)^{n-3} c \sim w_\alpha  e_0a_0a_0(b_0e_0)^{n-3} c
\sim w_\alpha  e_0a_0f_0czb_0^{n-2}\sim  \qquad\quad \\
\sim w_\alpha  e_0a_0f_0dzb_0^{n-2} \sim w_{\alpha  e_0a_0f_0dzb_0^{n-2}}\in W_\mathrm{s}.
\end{align*}

Now, suppose that $1\in \im(\alpha)$ and $0\not\in\dom(\alpha)$.
Then, we have $\im(\alpha)\subseteq\Omega_{n-1}$.
Let $\alpha'$ be the extension of $\alpha$ such that $\dom(\alpha')=\dom(\alpha)\cup\{0\}$ and $0\alpha'=0$.
Clearly, $\alpha'\in\PsEnd(S_n)$ and $\alpha'd=\alpha$,
whence
$$
wc=w_\alpha c=w_{\alpha'd}c \sim w_{\alpha'} dc \sim w_{\alpha'} c.
$$
Therefore, if $\im(\alpha)=\{1\}$, then $wc\sim  w_{\alpha'} c\sim w'$, for some $w'\in W_\mathrm{s}$, by the previous case.
Otherwise, we get $\alpha'\in B_1$ and so $wc\sim w_{\alpha'} c \sim w_{\alpha'} c w_{\sigma_1}\in W_7$.

Finally, with regard to the remaining case, i.e. $1\in \im(\alpha)$, $0\in\dom(\alpha)$, $|\im(\alpha)|\geqslant3$ and $0\alpha\neq0$,
this is not possible in $\PsEnd(S_n)$.

\smallskip

Below, we suppose that $w \in W_7$. Let $1\leqslant k \leqslant n-1$ and $\alpha\in B_k$ be such that $w=w_\alpha c w_{\sigma_k}$.
The rest of the proof will be carried out considering each case for $x\in \{a_{0},b_{0},e_{0},f_{0},d,z,c\}$.

\smallskip

\noindent\textsc{case} $x=a_{0}$. If $k\geqslant 3$, then $\sigma_ka_0=b_{0}^{n-2}a_{0}b_{0}\sigma_k$, whence $w_{\sigma_k}a_0 \sim_{R_{\mathrm{s}}} b_{0}^{n-2}a_{0}b_{0}w_{\sigma_k}$ and so
$$
wa_0 = w_\alpha c w_{\sigma_k} a_{0} \sim w_\alpha c b_{0}^{n-2}a_{0}b_{0}w_{\sigma_k}
 \sim w_\alpha b_{0}^{n-2}a_{0}b_{0} c w_{\sigma_k} \sim w_{\alpha b_{0}^{n-2}a_{0}b_{0}} c w_{\sigma_k} \in W_7,
$$
since we also have $\alpha b_{0}^{n-2}a_{0}b_{0}\in B_k$.
If $k=2$, then $\sigma_k=a_0$ and, as $B_2\subset B_1$, we also have $\alpha\in B_1$, whence
$$
wa_0 = w_\alpha c w_{\sigma_k} a_{0} \sim w_\alpha c a_0 a_{0}\sim w_\alpha c w_{\sigma_1}\in W_7.
$$
Thus, suppose that $k=1$. In this case, we have $wa_0 = w_\alpha c w_{\sigma_1}a_{0} \sim w_\alpha c w_{\sigma_2}$.
If $2 \in \im(\alpha)$, then we get $\alpha \in B_2$ and so $w_\alpha c w_{\sigma_2} \in W_{7}$.
Next, suppose that $2 \notin \im(\alpha)$. Then, $\alpha a_{0}f_{0}a_{0}=\alpha$ and so $w_\alpha a_{0}f_{0}a_{0}\sim_{R_{\mathrm{s}}}w_\alpha$.
On the other hand, as $f_0a_0f_0=f_0a_0f_0a_0$, we get
\begin{align*}
wa_0=w_\alpha c a_{0} \sim w_\alpha a_0f_0a_0c a_{0} \sim w_\alpha c a_0f_0a_0 a_{0}
\sim w_\alpha c f_0a_0f_0\sim w_\alpha c f_0a_0f_0a_0 \sim w_\alpha c a_0f_0a_0 \sim \qquad\quad
\\ \sim w_\alpha a_0f_0a_0 c \sim w_\alpha c w_{\sigma_1} \in W_{7}.
\end{align*}

\smallskip

\noindent\textsc{case} $x=b_{0}$.
If $k=n-1$, then $\alpha\in B_1$ (since $B_{n-1}\subset B_1$) and
$$
wb_0 = w_\alpha cw_{\sigma_{n-1}}b_{0} \sim w_\alpha cb_{0}^{n-1} \sim w_{\alpha}c w_{\sigma_1} \in W_{7}.
$$
So, suppose that $1\leqslant k \leqslant n-2$. Then, $\sigma_kb_0 = b_{0}a_{0}\sigma_{k+1}$ and so
$$
wb_0 = w_\alpha cw_{\sigma_k}b_{0} \sim w_\alpha cb_{0}a_{0}w_{\sigma_{k+1}} \sim w_\alpha b_{0}a_{0}cw_{\sigma_{k+1}} \sim w_{\alpha b_{0}a_{0}}cw_{\sigma_{k+1}}.
$$
If $n-1 \in \im(\alpha)$, then $\{1,\ldots,k+1\}\subseteq \im(\alpha b_{0}a_{0})$, whence $\alpha b_0a_0\in B_{k+1}$ and so
$w_{\alpha b_{0}a_{0}}cw_{\sigma_{k+1}} \in W_7$.
Hence, suppose that $n-1 \notin \im(\alpha)$. Then, $2\notin\im(\alpha b_0a_0)$ and so $\alpha b_0a_0a_0f_0a_0=\alpha b_0a_0$.
On the other hand, $a_0f_0a_0\sigma_{k+1}=\sigma_{k+1}f_0$ and, by Lemma \ref{cwf0}
(notice that, as $\sigma_{k+1}$ is a permutation on $\Omega_{n-1}^0$, then $w_{\sigma_{k+1}}\in\{a_0,b_0\}^*$),
we have $cw_{\sigma_{k+1}}f_0\sim_{R_c}uc$,
for some $u\in\{a_0,b_0,e_0,f_0\}^*$ such that (as transformation) $u$ fixes $0$ and $1$.
Hence, we obtain
$$
wb_0\sim w_{\alpha} cb_{0}a_{0} w_{\sigma_{k+1}} \sim w_{\alpha} c b_{0}a_{0}a_0f_0a_0 w_{\sigma_{k+1}}
\sim w_{\alpha} c b_{0}a_{0} w_{\sigma_{k+1}}f_0  \sim w_{\alpha} b_{0}a_{0}  c w_{\sigma_{k+1}}f_0
\sim w_{\alpha} b_{0}a_{0} u c.
$$
In particular, we have $\alpha c\sigma_kb_0=\alpha b_0a_0uc$.
Since $\im(\alpha)\cap\{2,\ldots,n-1\}\neq\emptyset$, then $\im(\alpha c\sigma_kb_0)\cap\Omega_{n-1}\neq\emptyset$,
whence $\im(\alpha b_0a_0u)\cap\{2,\ldots,n-1\}\neq\emptyset$.
Moreover, $\alpha b_0a_0u$ also fixes $0$ and, since $1\in\im(\alpha)$ and both $b_0a_0$ and $u$ fix $1$, we also get $1\in\im(\alpha b_0a_0u)$.
Thus, $\alpha b_0a_0u\in B_1$.
Therefore, we have
$$
wb_0\sim w_{\alpha} b_{0}a_{0} u c\sim w_{\alpha b_{0}a_{0} u} c w_{\sigma_1} \in W_7.
$$

\smallskip

\noindent\textsc{case} $x=e_{0}$. First, observe that, $e_0=e_0b_0^{n-2}f_0b_0$ and,  by Lemma \ref{cwf0},
$cw_{\sigma_k e_0b_0^{n-2}}f_0\sim_{R_c}u c$, for some $u\in\{a_0,b_0,e_0,f_0\}^*$ such that (as transformation) $u$ fixes $0$ and $1$.
Hence, we have
$$
we_0=w_\alpha cw_{\sigma_k}e_{0} \sim w_\alpha cw_{\sigma_k}e_{0}b_0^{n-2}f_0b_0\sim
w_\alpha cw_{\sigma_ke_{0}b_0^{n-2}}f_0b_0\sim w_\alpha u cb_0
$$
and, in particular, $\alpha c\sigma_ke_0=\alpha ucb_0$.
As above, since $\im(\alpha)\cap\{2,\ldots,n-1\}\neq\emptyset$, then $\im(\alpha c\sigma_ke_0)\cap\Omega_{n-1}\neq\emptyset$, and so
$\im(\alpha u)\cap\{2,\ldots,n-1\}\neq\emptyset$.
In addition, as both $u$ and $\alpha$ fix $0$, $1\in\im(\alpha)$ and $u$ fixes $1$, we obtain $\alpha u\in B_1$.
Then, by the previous case, we have $w_{\alpha u} cw_{\sigma_1}b_0 \sim w'$, for some $w'\in W_7$.
Therefore,
$$
we_0 \sim w_\alpha u cb_0 \sim w_{\alpha u} cw_{\sigma_1}b_0 \sim w' \in W_7.
$$

\smallskip

\noindent\textsc{case} $x=f_{0}$.
First, suppose that $\im(\alpha)\cap\{3,\ldots,n-1\}\neq\emptyset$.
Then, $\im(\alpha c\sigma_kf_0)\cap\Omega_{n-1}\neq\emptyset$.
By Lemma \ref{cwf0}, take $u\in\{a_0,b_0,e_0,f_0\}^*$ such that (as transformation) $u$ fixes $0$ and $1$ and
$cw_{\sigma_k}f_0\sim_{R_c}u c$.
Hence, $\alpha c\sigma_kf_0=\alpha uc$, $0,1\in\im(\alpha u)$ and $\im(\alpha u)\cap\{2,\ldots,n-1\}\neq\emptyset$.
Thus, $\alpha u\in B_1$ and so
$$
wf_0=w_\alpha cw_{\sigma_k}f_{0} \sim w_\alpha u c \sim w_{\alpha u} cw_{\sigma_1} \in W_7.
$$
Conversely, suppose that $\im(\alpha)=\{0,1,2\}$. Then $k\in\{1,2\}$. If $k=1$, then
$$
wf_0=w_\alpha cf_{0} \sim w_\alpha c =w_\alpha cw_{\sigma_1} \in W_7.
$$
If $k=2$, since  $\alpha a_0f_0a_0 = \alpha a_0f_0a_0 (b_0e_0)^{n-3}$ and $f_0a_0f_0=f_0a_0f_0a_0$, we have
\begin{align*}
wf_0=w_\alpha cw_{\sigma_2}f_{0}\sim w_\alpha ca_0f_{0}\sim w_\alpha cf_0a_0f_{0}
\sim w_\alpha cf_0a_0f_{0} a_0 \sim w_\alpha c a_0f_{0} a_0  \sim w_\alpha a_0f_{0} a_0 c \sim \quad\qquad\\
\sim w_\alpha  a_0f_0a_0 (b_0e_0)^{n-3} c
\sim w_\alpha  a_0f_0 c zb_0^{n-2}
\sim w_\alpha  a_0f_0 d zb_0^{n-2} \sim w_{\alpha  a_0f_0 d zb_0^{n-2}}\in W_\mathrm{s}.
\end{align*}

\smallskip

\noindent\textsc{case} $x=c$. As above, by Lemma \ref{cwf0}, take $u\in\{a_0,b_0,e_0,f_0\}^*$ such that $cw_{\sigma_k}f_0\sim_{R_c}u c$.
Then, as $f_0\sigma_k=\sigma_kf_0$, we have
$$
wc=w_\alpha cw_{\sigma_k}c \sim w_\alpha cf_0w_{\sigma_k}c \sim w_\alpha cw_{\sigma_k}f_0c
\sim w_\alpha uc^2  \sim w_\alpha u f_0d \sim w_{\alpha u f_0d} \in W_\mathrm{s}.
$$

\smallskip

\noindent\textsc{case} $x=d$. Then, as $d\sigma_k=\sigma_kd$, we have
$$
wd=w_\alpha cw_{\sigma_k}d \sim w_\alpha cdw_{\sigma_k} \sim w_\alpha f_0dw_{\sigma_k}\sim w_{\alpha f_0dw_{\sigma_k}}\in W_\mathrm{s}.
$$

\smallskip

\noindent\textsc{case} $x=z$. In this last case, we begin by observing that $\sigma_k z=z$. Next, notice that $\alpha a_{0}(b_{0}e_{0})^{n-3}$ fixes $0$ and,
as  $\im(\alpha)\cap\{2,\ldots,n-1\}\neq\emptyset$, we have $\im(\alpha a_{0}(b_{0}e_{0})^{n-3})=\{0,1,n-1\}$,
whence $\alpha a_{0}(b_{0}e_{0})^{n-3}\in B_1$.
Therefore,
$$
wz=w_\alpha cw_{\sigma_k} z \sim w_\alpha cz \sim w_\alpha czb_{0}^{n-1} \sim w_\alpha a_{0}(b_{0}e_{0})^{n-3}cb_{0} \sim w_{\alpha a_{0}(b_{0}e_{0})^{n-3}}cw_{\sigma_1}b_{0} \sim w',
$$
for some $w'\in W_7$, by the \textsc{case} $x=b_0$ with $k=1$.

The proof is now complete.
\end{proof}

Since all relations of $R_c$ are satisfied by the generators $a_0$, $b_0$, $e_0$, $f_0$, $d$, $c$ and $z$ of $\PEnd(S_n)$ and $|W_c|=|\PEnd(S_n)|$,
taking into account the last lemma, we can conclude the following theorem.

\begin{theorem}\label{presP}
The presentation $\langle a_0,b_0,e_0,f_0,c,d,z\mid R_c\rangle$ defines the monoid $\PEnd(S_n)$.
\end{theorem}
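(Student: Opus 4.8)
The plan is to invoke the Guess and Prove method of Theorem~\ref{ruskuc} with generating set $X=\{a_0,b_0,e_0,f_0,c,d,z\}$, relation set $R=R_c$, and set of words $W=W_c$. Thus I would check its three hypotheses in turn, two of which have effectively already been established by the two lemmas immediately preceding the statement.

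For Condition~1, I would confirm that the generators of $\PEnd(S_n)$ satisfy every relation of $R_c$. This is the routine direct computation already flagged after the definition of $R_c$: one simply evaluates both sides of each relation as transformations of $\Omega_{n-1}^0$. The relations inherited from $R_\mathrm{s}$ hold because, by Theorem~\ref{presPs}, they are valid in $\PsEnd(S_n)\subseteq\PEnd(S_n)$, so only the new relations involving $c$ require inspection. For Condition~3, the preceding cardinality lemma gives $|W_c|=|\PEnd(S_n)|$, and in particular $|W_c|\le|\PEnd(S_n)|$. The heart of the matter is Condition~2, namely that every word over $X$ is $R_c$-equivalent to some element of $W_c$. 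Here I would appeal to the criterion recalled in Section~\ref{presection}: since the empty word lies in $W_c$ (it belongs to $W_\mathrm{s}\subseteq W_c$), it is enough to know that for each $w\in W_c$ and each $x\in X$ there exists $w'\in W_c$ with $wx\sim_{R_c}w'$, from which Condition~2 follows by induction on word length. But that statement is precisely the last lemma. With all three hypotheses of Theorem~\ref{ruskuc} in hand, I would conclude that $\langle a_0,b_0,e_0,f_0,c,d,z\mid R_c\rangle$ defines $\PEnd(S_n)$.

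The genuine difficulty of this argument is discharged not in the theorem itself but in the two supporting lemmas. In the closure lemma, the delicate cases are $x=b_0$ and $x=e_0$, where one must push the letter $c$ past the word $w_{\sigma_k}$; this is exactly what Lemma~\ref{cwf0} enables, by rewriting $cw_{\sigma_k}f_0$ as $uc$ with $u$ fixing $0$ and $1$, so that the resulting transformation stays inside some $B_j$ and the word stays inside $W_7$. For the theorem proper, the only point to watch is that the length induction for Condition~2 can actually start, which it does because $W_\mathrm{s}$ was chosen to contain the empty word.
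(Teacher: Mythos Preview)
Your proposal is correct and follows essentially the same approach as the paper: invoke Theorem~\ref{ruskuc} with Condition~1 the routine verification that $R_c$ holds in $\PEnd(S_n)$, Condition~3 the cardinality lemma $|W_c|=|\PEnd(S_n)|$, and Condition~2 deduced from the closure lemma together with the fact that the empty word lies in $W_\mathrm{s}\subseteq W_c$. The paper's proof is exactly this short deduction, with all the real work deferred to the two preceding lemmas, just as you describe.
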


Observe that, considering again the set $R_\mathrm{s}$ of defining relations for $\PsEnd(S_n)$ based on the presentations by Popova for $\PT(\Omega_{n-1})$  \cite{Popova:1961} and by Moore  for $\Sym(\Omega_{n-1})$ \cite{Moore:1897}, we have $|R_c|=n+38$.

\section{A presentation for $\PwEnd(S_n)$}

Let $R_\mathrm{w}$ be the set of relations $R_\mathrm{s}$ together with the relations
\begin{align*}
c_0^2=c_0,~
e_0c_0=c_0a_0c_0, ~
c_0f_0=c_0,~ f_0c_0=f_0,~ c_0d=f_0d,~ \\
c_0b_0^{n-2}a_0b_0=b_0^{n-2}a_0b_0c_0,~ c_0b_0a_0=b_0a_0c_0,~\\
c_0a_0f_0a_0=a_0f_0a_0c_0,~
c_0a_0b_0^{n-2}a_0b_0e_0b_0^{n-2}a_0b_0a_0=a_0b_0^{n-2}a_0b_0e_0b_0^{n-2}a_0b_0a_0c_0,~ \\
c_0zc_0=zc_0,~
z^2c_0=zc_0,~
dc_0zb_0^{n-2}=a_0(b_0e_0)^{n-3}dc_0
\end{align*}
on the alphabet $\{a_0,b_0,e_0,f_0,c_0,d,z\}$.
It is easy to verify that all relations of $R_\mathrm{w}$ are satisfied by the generators $a_0$, $b_0$, $e_0$, $f_0$, $c_0$, $d$ and $z$ of $\PwEnd(S_n)$.

\smallskip

With a similar argument to Lemma \ref{r1}, we can prove the following lemma.

\begin{lemma}\label{r1w}
Let $w$ be a word on the alphabet $\{a_0,b_0,e_0,f_0\}$ such that (as transformation) $w\in \PT_{n-2}^{0,1}$. Then, $c_0w\sim_{R_\mathrm{w}} wc_0$.
\end{lemma}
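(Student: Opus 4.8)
The plan is to mimic exactly the proof of Lemma \ref{r1}, reading the four ``mixed'' relations of $R_\mathrm{w}$ as commutation rules between $c_0$ and the four generators of $\PT_{n-2}^{0,1}$. Indeed, recalling that $a'_0=b_0^{n-2}a_0b_0$, $b'_0=b_0a_0$, $f'_0=a_0f_0a_0$ and $e'_0=a_0b_0^{n-2}a_0b_0e_0b_0^{n-2}a_0b_0a_0$, the relations
$$
c_0b_0^{n-2}a_0b_0=b_0^{n-2}a_0b_0c_0,\quad c_0b_0a_0=b_0a_0c_0,\quad c_0a_0f_0a_0=a_0f_0a_0c_0
$$
together with $c_0a_0b_0^{n-2}a_0b_0e_0b_0^{n-2}a_0b_0a_0=a_0b_0^{n-2}a_0b_0e_0b_0^{n-2}a_0b_0a_0c_0$ say precisely that $c_0t\sim_{R_\mathrm{w}}tc_0$ for each $t\in\{a'_0,b'_0,e'_0,f'_0\}$.

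First I would use that $\{a'_0,b'_0,e'_0,f'_0\}$ generates $\PT_{n-2}^{0,1}$: since (as a transformation) $w\in\PT_{n-2}^{0,1}$, there is a word $v$ over $\{a'_0,b'_0,e'_0,f'_0\}$ equal to $w$ as a transformation. Expanding each letter $a'_0,b'_0,e'_0,f'_0$ into its defining word shows that $v$ is also a word over $\{a_0,b_0,e_0,f_0\}$ representing the same element of $\PT(\Omega_{n-1})\zeta$ as $w$. Since $R_0$ (which is contained in $R_\mathrm{w}$) is a defining set for $\PT(\Omega_{n-1})\zeta$ on these four generators, this yields $w\sim_{R_0}v$.

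Next I would push $c_0$ across $v$. Writing $v=t_1\cdots t_m$ with each $t_i\in\{a'_0,b'_0,e'_0,f'_0\}$, a straightforward induction on $m$ using the four commutation relations above gives $c_0v\sim_{R_\mathrm{w}}vc_0$. Combining the two steps,
$$
c_0w\sim_{R_0}c_0v\sim_{R_\mathrm{w}}vc_0\sim_{R_0}wc_0,
$$
and hence $c_0w\sim_{R_\mathrm{w}}wc_0$, as required.

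There is no genuine obstacle here: the argument is word-for-word the proof of Lemma \ref{r1}, with $c$ replaced by $c_0$ and the relevant subset $R_1$ of $R_c$ replaced by its $R_\mathrm{w}$-counterpart. The only point requiring a moment's care is the legitimacy of the rewriting $w\sim_{R_0}v$, which rests solely on $R_0$ being a defining set for $\PT(\Omega_{n-1})\zeta$ together with the inclusion $R_0\subseteq R_\mathrm{w}$; the additional relations of $R_\mathrm{w}$ involving $z$, $d$ or the idempotency of $c_0$ play no role whatsoever in this commutation lemma.
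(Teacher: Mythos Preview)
Your proposal is correct and follows precisely the approach the paper intends: it explicitly invokes Lemma~\ref{r1}'s argument with $c$ replaced by $c_0$, using that $\{a'_0,b'_0,e'_0,f'_0\}$ generates $\PT_{n-2}^{0,1}$ together with the four commutation relations for $c_0$ in $R_\mathrm{w}$ and the rewriting via $R_0\subseteq R_\mathrm{w}$. The paper itself gives no separate proof here, only the remark that the argument is similar to Lemma~\ref{r1}, which is exactly what you have supplied.
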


And, similarly to Lemma \ref{cwf0}, we can also prove:

\begin{lemma}\label{PwEnd}
Let $w$ be a word on the alphabet $\{a_{0},b_{0},e_{0},f_{0}\}$. Then, there exists a word $w'$ on the alphabet $\{a_{0},b_{0},e_{0},f_{0}\}$ such that $c_0wf_{0}\sim_{R_\mathrm{w}}w'c_0$ and (as transformation) $w'\in \PT_{n-2}^{0,1}$.
\end{lemma}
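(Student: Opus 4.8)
The plan is to treat this statement exactly as Lemma~\ref{cwf0} was treated, namely as the $c_0$-analogue of the implication Lemma~\ref{cw} $\Rightarrow$ Lemma~\ref{cwf0}. The crucial observation is that, as a transformation, $\im(wf_0)\subseteq\im(f_0)=\{0,2,3,\ldots,n-1\}$, so $1\notin\im(wf_0)$ for every $w$. Hence it suffices to prove the $c_0$-analogue of Lemma~\ref{cw}: for every word $v$ on $\{a_0,b_0,e_0,f_0\}$ with $1\notin\im(v)$ (as a transformation) there is a word $v'$ on $\{a_0,b_0,e_0,f_0\}$ with $v'\in\PT_{n-2}^{0,1}$ and $c_0v\sim_{R_\mathrm{w}}v'c_0$; applying this to $v=wf_0$ then yields the lemma immediately.

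To prove this $c_0$-analogue I would copy the proof of Lemma~\ref{cw} line by line, replacing $c$ by $c_0$, $R_c$ by $R_\mathrm{w}$, and $R_1$ by the four commuting relations of $R_\mathrm{w}$ (i.e.\ by Lemma~\ref{r1w}). In the case $1\notin\dom(v)$ one writes $v=\alpha f_0$ with $\alpha\in\PT_{n-2}^{0,1}$, picks a word $v'$ representing $\alpha$, and computes $c_0v\sim_{R_0}c_0v'f_0\sim_{R_\mathrm{w}}v'c_0f_0\sim_{R_\mathrm{w}}v'c_0$, using Lemma~\ref{r1w} to commute $c_0$ past $v'$ and then the relation $c_0f_0=c_0$. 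In the case $1\in\dom(v)$ one uses the same decomposition $v=\alpha\beta$ with $\alpha\in\PT_{n-2}^{0,1}$ and $\beta=\transf{0&1&2&\cdots&n-1\\0&j&2&\cdots&n-1}$, the same factorisation $\beta=w_2e_0w_2w_1$ with $w_2$ an involution fixing $0,1$, and runs the identical chain of reductions, now invoking $c_0f_0=c_0$ in place of $cf_0=c$, Lemma~\ref{r1w} in place of Lemma~\ref{r1}, and the $\PT(\Omega_{n-1})\zeta$-identity $f_0e_0=f_0a_0$ exactly as before.

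The only point that really needs checking — and the place where the argument could fail — is that every relation on $c$ used inside Lemma~\ref{cw} has a syntactic twin on $c_0$ already present in $R_\mathrm{w}$. The relations actually invoked there are $cf_0=c$ and the four commuting relations behind Lemma~\ref{r1}, together with the purely $\{a_0,b_0,e_0,f_0\}$ identity $f_0e_0=f_0a_0$. Their $c_0$-counterparts $c_0f_0=c_0$ and $c_0b_0^{n-2}a_0b_0=b_0^{n-2}a_0b_0c_0$, $c_0b_0a_0=b_0a_0c_0$, $c_0a_0f_0a_0=a_0f_0a_0c_0$, $c_0a_0b_0^{n-2}a_0b_0e_0b_0^{n-2}a_0b_0a_0=a_0b_0^{n-2}a_0b_0e_0b_0^{n-2}a_0b_0a_0c_0$ all belong to $R_\mathrm{w}$, while $f_0e_0=f_0a_0$ is an equality in $\PT(\Omega_{n-1})\zeta$ and hence derivable from $R_0\subseteq R_\mathrm{w}$. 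Conversely, none of the genuinely $c$-specific relations ($c^2=f_0d$, $f_0c=f_0d$, $cd=f_0d$, $dc=c$, $czb_0^{n-2}=a_0(b_0e_0)^{n-3}c$) is ever used in Lemma~\ref{cw}, so the structural differences between the $c$- and $c_0$-presentations are irrelevant here. Once this correspondence is confirmed, the reductions transfer verbatim, the extracted $w'$ lies in $\PT_{n-2}^{0,1}$ by construction, and the proof is complete.
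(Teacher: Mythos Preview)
Your proposal is correct and takes essentially the same approach as the paper: the paper simply states that Lemma~\ref{PwEnd} is proved ``similarly to Lemma~\ref{cwf0}'', and you have unpacked exactly what that means---namely, observe $1\notin\im(wf_0)$, then rerun the proof of Lemma~\ref{cw} with $c_0$ in place of $c$, checking that the only $c$-relations invoked ($cf_0=c$ and the four commuting relations behind Lemma~\ref{r1}) have direct $c_0$-counterparts in $R_\mathrm{w}$.
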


\smallskip

Next, let us define
$$
D=\left\{ \transf{0&A&B\\0&1&2}\in\PT(\Omega_{n-1}^0)\mid A,B\subseteq\Omega_{n-1}, A\neq\emptyset\right\}
$$
and
$$
D_{k}=\{\alpha\! \in\! \PsEnd(S_{n}) \mid \mbox{$\{1,\ldots,k\}\!\subseteq\! \im(\alpha)$ and either $0\!\in\! \dom(\alpha)$ and $0\alpha = 0$
or  $0\!\notin\! \dom(\alpha)$  and $|\im(\alpha)| \geqslant 2$}\},
$$
for $1 \leqslant k \leqslant n-1$.
Then, obviously, $D_{n-1} \subset D_{n-2} \subset \cdots \subset D_1$. It is also clear that $D\subset D_1$.

As in Section \ref{pendsn}, let us take a set $W_\mathrm{s}$ of canonical forms for $\PsEnd(S_n)$ on the generators $a_{0}, b_{0}, e_{0}, f_{0}, d$ and $z$ containing the empty word and, for $\alpha \in \PsEnd(S_{n})$, denote by $w_\alpha$ the word in $W_\mathrm{s}$ representing the transformation $\alpha$.
So, consider the following sets of words on the alphabet $\{a_{0},b_{0},e_{0},f_{0},d,z,c_{0}\}$:
$$
W_{8} = \bigcup_{k=1}^{n-1}\left\{w_\alpha c_0 w_{\sigma_k} \mid \alpha \in D_{k}\right\},
$$
with $\sigma_1,\ldots,\sigma_k$ defined as in (\ref{sk}) and
$$
W_{9} = \{w_\alpha c_0zb_{0}^{\ell} \mid \alpha \in D,~ 0\leqslant \ell \leqslant n-2\}.
$$
Recall that, $\sigma_1$ is the identity on $\Omega_{n-1}^0$ (whence $w_{\sigma_1}$ is the empty word), $\sigma_2=a_0$ and $\sigma_{n-1}=b_0^{n-2}$.
Let
$$
W_\mathrm{w} = W_\mathrm{s} \cup W_8 \cup W_9.
$$

\begin{lemma}\label{PwEnd2}
$|W_\mathrm{w}| =|\PwEnd(S_{n})|$.
\end{lemma}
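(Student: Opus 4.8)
The plan is to mirror the cardinality computation of the preceding lemma ($|W_c|=|\PEnd(S_n)|$), exploiting the set-theoretic decomposition of $\PwEnd(S_n)$ recorded in Section~\ref{prelim}. I would write $\PwEnd(S_n)\setminus\PsEnd(S_n)$ as the disjoint union of the three families adjoined there: the ``$\PEnd$-part'' $P=\{\alpha\mid 0\notin\dom(\alpha),\ 0\in\im(\alpha),\ \im(\alpha)\cap\Omega_{n-1}\neq\emptyset\}$, the set $T_1=\{\alpha\mid 0\in\dom(\alpha),\ 0\alpha=0,\ 0\in\Omega_{n-1}\alpha\}$, and the set $T_2=\{\alpha\mid \{0\}\subsetneq\dom(\alpha),\ 0\alpha\neq0,\ 0\alpha\in\Omega_{n-1}\alpha\subseteq\{0,0\alpha\}\}$. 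These three are pairwise disjoint by their defining conditions, so it suffices to match $W_\mathrm{s}$ with $\PsEnd(S_n)$, $W_9$ with $T_2$, and $W_8$ with $U:=P\cup T_1$. Since $W_\mathrm{s}$ is a set of canonical forms, $|W_\mathrm{s}|=|\PsEnd(S_n)|$ is immediate.

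For $W_9$ I would simply compute the represented transformations. For $\alpha=\transf{0&A&B\\0&1&2}\in D$ one gets $\alpha c_0z=\transf{0&A&B\\1&1&0}$, hence $\alpha c_0zb_0^\ell=\transf{0&A&B\\m&m&0}$, where $m=1b_0^\ell$; as $\ell$ runs over $\{0,\ldots,n-2\}$ the value $m$ runs over all of $\Omega_{n-1}$. This exhibits $(\alpha,\ell)\mapsto\alpha c_0zb_0^\ell$ as a bijection from $D\times\{0,\ldots,n-2\}$ onto $T_2$: the inverse reads off $m=0\beta$ (hence $\ell$), and recovers $A$ and $B$ as the nonzero preimages under $\beta$ of $m$ and of $0$ respectively, with $A\neq\emptyset$ forced by $m\in\Omega_{n-1}\beta$. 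Thus $|W_9|=(n-1)|D|=|T_2|$.

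For $W_8$ I would partition $U$ by $k=\min(\Omega_{n-1}\setminus\im(\beta))$, which is well defined because no $\beta\in U$ satisfies $\Omega_{n-1}\subseteq\im(\beta)$, into classes $F_k$, and show that $\alpha\mapsto\alpha c_0\sigma_k$ is a bijection $D_k\to F_k$, exactly as $\phi_k,\psi_k$ were used in the $\PEnd$ lemma. A direct composition shows that $\{0,1,\ldots,k-1\}\subseteq\im(\alpha c_0\sigma_k)$ while $k\notin\im(\alpha c_0\sigma_k)$ (since $\sigma_k^{-1}(k)=1$ and $c_0$ removes $1$ from the image), so the two flavours of $D_k$ land correctly: if $0\in\dom(\alpha)$ and $0\alpha=0$ then $\alpha c_0\sigma_k\in T_1$, whereas if $0\notin\dom(\alpha)$ and $|\im(\alpha)|\geqslant2$ then $\alpha c_0\sigma_k\in P$. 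Invertibility is the delicate point: since $\sigma_k$ is a permutation one recovers $\gamma:=\beta\sigma_k^{-1}=\alpha c_0$ at once, and $\alpha$ is then reconstructed by declaring the elements of $\gamma^{-1}(0)\setminus\{0\}$ (flavour $0\in\dom$) or all of $\gamma^{-1}(0)$ (flavour $0\notin\dom$) to be the preimage of $1$, the flavour being dictated by whether $0\in\dom(\beta)$. Granting these bijections, $|W_8|=\sum_{k=1}^{n-1}|D_k|=\sum_{k=1}^{n-1}|F_k|=|U|$.

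Finally I would note that the three word families are pairwise disjoint: $W_\mathrm{s}$ contains no letter $c_0$, and the transformations represented by $W_8$ (lying in $U$) and by $W_9$ (lying in $T_2$) occupy disjoint parts of $\PwEnd(S_n)$, while within each family the parametrisation is injective because distinct parameters yield distinct transformations and hence distinct words. Therefore $|W_\mathrm{w}|=|W_\mathrm{s}|+|W_8|+|W_9|=|\PsEnd(S_n)|+|U|+|T_2|=|\PwEnd(S_n)|$. The main obstacle is the verification of the $W_8$ bijections $D_k\cong F_k$ — specifically the reconstruction of $\alpha$ from $\beta$ across the non-injective letter $c_0$, and the bookkeeping that the two flavours of $D_k$ map onto exactly $T_1$ and $P$; everything else reduces to the routine composition of the displayed transformations.
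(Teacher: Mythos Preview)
Your proposal is correct and follows essentially the same approach as the paper: your sets $T_2$ and $F_k\subseteq U=P\cup T_1$ coincide with the paper's $\Gamma_0$ and $\Gamma_k$ (for $k\geqslant1$) respectively, and your bijections $D_k\to F_k$, $\alpha\mapsto\alpha c_0\sigma_k$, are exactly the maps $\phi_k$ the paper uses (with the same inverse construction $\psi_k$). The only cosmetic difference is that you organise the decomposition via the three families $P,T_1,T_2$ recalled from Section~\ref{prelim} before refining $P\cup T_1$ by the index $k$, whereas the paper introduces the $\Gamma_k$ directly.
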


\begin{proof}
Let $\alpha \in \PwEnd(S_{n})\setminus \PsEnd(S_{n})$. Then, either
$0\not\in\dom(\alpha)$ and $0\in\im(\alpha)$
or $0\in\dom(\alpha)$ and $0\alpha\in\Omega_{n-1}\alpha$.
So, let us define
$$
\Gamma_0 = \{\alpha \in \PwEnd(S_{n})\setminus \PsEnd(S_{n})\mid \mbox{$0 \in \dom(\alpha)$ and $0\alpha \neq 0$}\}
$$
and, for $1\leqslant k\leqslant n-1$,
$$
\Gamma_{k} = \{\alpha\! \in\! \PwEnd(S_{n})\setminus \PsEnd(S_{n}) \mid \mbox{$k = \min(\Omega_{n-1}\! \setminus\! \Omega_{n-1}\alpha)$
and either $0\! \notin\! \dom(\alpha)$ or $0\! \in\! \dom(\alpha)$ and $0\alpha = 0$}\}.
$$

Therefore, as $\Omega_{n-1}\setminus\Omega_{n-1}\alpha\neq\emptyset$,  it is clear that
$\PwEnd(S_{n})\setminus \PsEnd(S_{n}) = \Gamma_0 \cup \Gamma_1 \cup \cdots \cup \Gamma_{n-1}$
and $|\PwEnd(S_{n})\setminus \PsEnd(S_{n})| = \sum_{k=0}^{n-1}|\Gamma_k|$.

\smallskip

First, observe that, we can also write
$$
\Gamma_0=\left\{ \transf{0&A&B\\\ell&\ell&0}\in\PT(\Omega_{n-1}^0)\mid A,B\subseteq\Omega_{n-1}, A\neq\emptyset, 1\leqslant\ell\leqslant n-1\right\}
$$
and so, clearly, $|\Gamma_0| =(n-1)|D|$.

\smallskip

Next, we are going to show that
$| \Gamma_{k}| =| D_{k}|$ for all $1\leqslant k\leqslant n-1$.

Let $1\leqslant k\leqslant n-1$. We define a mapping $\phi_{k}: D_{k}\rightarrow \PwEnd(S_{n})$ by
$\alpha\phi_{k}= \alpha c_{0}\sigma_k,$
for $\alpha \in D_{k}$.
It is a routine matter to verify that $D_k\phi_{k}\subseteq \Gamma_{k}$.
On the other hand, observing that $\Omega_{n-1}\subset\dom(c_0\sigma_k)$, $(c_0\sigma_k)|_{\Omega_{n-1}}$ is injective and,
for $\alpha\in D_k$, we have $\Omega_{n-1}\alpha\subseteq \Omega_{n-1}$ (notice that, as $1\in\im(\alpha)$, if $0\notin\dom(\alpha)$, then $\Omega_{n-1}\alpha=\im(\alpha)\subseteq\Omega_{n-1}$) and $\dom(\alpha\phi_k)=\dom(\alpha)$,
we can deduce that $\phi_k$ is an injective mapping
and so $| D_{k}| \leqslant | \Gamma_{k}|$.
Conversely, we define a mapping $\psi_k: \Gamma_k \rightarrow \PT(\Omega_{n-1}^0)$ by:
for $\alpha \in \Gamma_k$, let $\dom(\alpha\psi_k)=\dom(\alpha)$ and
$$
x(\alpha\psi_k)=\left\{
\begin{array}{ll}
1 & \mbox{if $x\neq0$ and $x\alpha=0$} \\
x\alpha\sigma_k^{-1} & \mbox{otherwise,}
\end{array}%
\right.
$$
for $x\in \dom(\alpha)$.
Observe that, for $\alpha \in \Gamma_k$, we have $\{1,\ldots,k-1\}\subseteq\im(\alpha)$, whence $\{1,\ldots,k\}\subseteq\im(\alpha\psi_k)$.
Moreover, if $0\in\dom(\alpha)$, then $0\in\dom(\alpha\psi_k)$ and, clearly, $0(\alpha\psi_k)=0$.
On the other hand, if $0\notin\dom(\alpha)$, then there exist $x,y\in\dom(\alpha)$ such that $x\alpha=0$ and $y\alpha\in\Omega_{n-1}\setminus\{k\}$, whence $x(\alpha\psi_k)=1$ and
$y(\alpha\psi_k)\neq1$, which implies that $|\im(\alpha\psi_k)|\geqslant2$.
Hence, $\Gamma_k\psi_k \subseteq D_{k}$.
Furthermore, it is easy to show that $\psi_k$ is injective. Thus, $|\Gamma_k| \leqslant |D_k|$ and so $|\Gamma_k| = |D_k|$.

\smallskip

Thus, we have
$
|W_\mathrm{w}|=|W_\mathrm{s}|+|W_8|+|W_9|=|\PsEnd(S_n)|+\sum_{k=1}^{n-1}|D_k|+(n-1)|D|=|\PsEnd(S_n)|+\sum_{k=0}^{n-1}|\Gamma_k|=
|\PsEnd(S_n)|+|\PwEnd(S_{n})\setminus \PsEnd(S_{n})|=|\PwEnd(S_n)|,
$
as required.
\end{proof}

\begin{lemma}\label{PwEnd1}
Let $w\in W_\mathrm{w}$ and let $x\in \{a_{0},b_{0},e_{0},f_{0},d,z,c_{0}\}$.
Then, there exists $w'\in W_\mathrm{w}$ such that $wx \sim_{R_\mathrm{w}}w'$.
\end{lemma}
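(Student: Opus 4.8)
The plan is to mirror, step by step, the closure lemma for $\PEnd(S_n)$ that precedes Theorem~\ref{presP}, with the idempotent generator $c_0$ playing the role of the injective generator $c$ and the two sets $W_8,W_9$ playing the role of $W_7$. Writing $\sim$ for $\sim_{R_\mathrm{w}}$, I would argue by a primary split according to whether $w\in W_\mathrm{s}$, $w\in W_8$, or $w\in W_9$, and inside each by a secondary split over the seven letters $x\in\{a_0,b_0,e_0,f_0,d,z,c_0\}$. The standing tools are Theorem~\ref{presPs} (so that any subword over $\{a_0,b_0,e_0,f_0,d,z\}$ may be freely replaced by the canonical form $w_\beta\in W_\mathrm{s}$ of the transformation it represents), Lemma~\ref{r1w} (which commutes $c_0$ past any word representing an element of $\PT_{n-2}^{0,1}$), and Lemma~\ref{PwEnd} (which absorbs a trailing $f_0$ and thereby pulls $c_0$ leftwards into a $\PT_{n-2}^{0,1}$-factor). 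The new relations of $R_\mathrm{w}$ exist precisely to realise these moves, and I would keep in reserve the $z$-relations $c_0zc_0=zc_0$, $z^2c_0=zc_0$ and the wrap-around relation $dc_0zb_0^{n-2}=a_0(b_0e_0)^{n-3}dc_0$.

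For the base case $w=w_\alpha\in W_\mathrm{s}$ with $x\neq c_0$, Lemma~\ref{sfun} already returns some $w'\in W_\mathrm{s}\subseteq W_\mathrm{w}$. The only real work is $x=c_0$, where I classify the transformation $\alpha c_0$ by the conditions defining the sets $D_k$ of Lemma~\ref{PwEnd2}. If $\alpha\in D_1$ then $w_\alpha c_0=w_\alpha c_0 w_{\sigma_1}\in W_8$ and there is nothing to do. Otherwise $\alpha\notin D_k$ for every $k$, and the failure of the conditions of $D_1$ leaves only mild possibilities: if $1\notin\im(\alpha)$ then $\alpha f_0=\alpha$, so $w_\alpha c_0\sim w_\alpha f_0 c_0\sim w_\alpha f_0\sim w_\alpha\in W_\mathrm{s}$ using $f_0c_0=f_0$; if $0\notin\dom(\alpha)$ with $|\im(\alpha)|=1$ (forcing $\im(\alpha)=\{1\}$) then $\alpha c_0$ is the constant map onto $\{0\}$ and again reduces into $W_\mathrm{s}$; and if $0\in\dom(\alpha)$ with $0\alpha\neq0$ (so that $\Omega_{n-1}\alpha\subseteq\{0\}$) one checks that $\alpha c_0$ is either strong, reducing into $W_\mathrm{s}$, or equals $z_0\in\Gamma_1$, routing into $W_8$. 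Note that, because $D_k$ already admits $0\notin\dom(\alpha)$ (with $|\im(\alpha)|\geqslant2$), no stripping of a fixed point $0$ is needed here, in contrast with the $\PEnd$ argument.

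For $w=w_\alpha c_0 w_{\sigma_k}\in W_8$ (with $\alpha\in D_k$) and $w=w_\alpha c_0 z b_0^\ell\in W_9$ (with $\alpha\in D$), I would, for each letter $x$, first move $x$ through the permutation tail using $\sigma_k a_0=b_0^{n-2}a_0b_0\sigma_k$, $\sigma_k b_0=b_0a_0\sigma_{k+1}$ (with wrap-around at $k=n-1$), $d\sigma_k=\sigma_k d$ and $\sigma_k z=z$, exactly as in the $\PEnd$ proof, and then push the resulting front letters across $c_0$ by Lemmas~\ref{r1w} and~\ref{PwEnd}. The letter $e_0$, which does not represent an element of $\PT_{n-2}^{0,1}$, is passed using the dedicated relation $e_0c_0=c_0a_0c_0$, and the borderline $x=f_0$, $k=2$ subcase is closed with $dc_0zb_0^{n-2}=a_0(b_0e_0)^{n-3}dc_0$. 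For $x\in\{a_0,b_0,e_0,f_0,d\}$ the outcome re-lands in $W_8$ (with index $k$ or $k+1$) or collapses into $W_\mathrm{s}$, once the membership of the resulting $\alpha'$ is verified. The genuinely new transitions are those involving $z$: since $\sigma_k z=z$, multiplying a $W_8$ word by $z$ gives $w_\alpha c_0 z$, whose transformation sends $0$ (when present) to $1$ and the rest into $\{0,1\}$, hence lies in $\Gamma_0$ (when $0\in\dom(\alpha)$) or in some $\Gamma_k$; the relations $c_0zc_0=zc_0$, $z^2c_0=zc_0$ and $dc_0zb_0^{n-2}=a_0(b_0e_0)^{n-3}dc_0$ then rewrite it as a $W_9$ word $w_\beta c_0 z b_0^\ell$ with $\beta\in D$ or, after collapse, as a $W_8$ word. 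Finally, for $w\in W_9$ the tail $zb_0^\ell$ acts as $\transf{0&1&\cdots&n-1\\ \ell+1&0&\cdots&0}$, so each $x$ merely shifts or collapses its single non-zero value and the product returns to $W_9$, $W_8$, or $W_\mathrm{s}$.

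The hard part will not be the rewriting but the membership bookkeeping: after each manipulation I must certify that the transformation $\alpha'$ occurring in the target normal form genuinely lies in the prescribed $D_{k'}$ or in $D$, which amounts to tracking how $\im(\alpha')$, the status of the point $0$ (whether $0\in\dom(\alpha')$ and whether $0\alpha'=0$), and the value $\min(\Omega_{n-1}\setminus\Omega_{n-1}\alpha')$ evolve under composition --- precisely the verifications that dominated the $\PEnd$ proof, such as checking $\{1,\ldots,k+1\}\subseteq\im(\alpha b_0a_0)$. The additional subtlety over $\PEnd$ is that $c_0$ is idempotent rather than injective, so images collapse and a single reduction can straddle the three regions $\PsEnd(S_n)$, the $D_k$-part, and the $0$-moving part $\Gamma_0$; separating these collapse patterns cleanly, and making sure the delicate $z$-transitions route to $W_9$ rather than to a spurious word, is where the care concentrates. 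With these checks in place, the stated closure under right multiplication follows.
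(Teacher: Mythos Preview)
Your overall architecture (primary split by $W_\mathrm{s}$, $W_8$, $W_9$; secondary split by letter; Lemmas~\ref{r1w} and~\ref{PwEnd} as the workhorses) matches the paper's, but two of the specific mechanisms you name are misassigned and would leave real gaps.

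First, the relation $e_0c_0=c_0a_0c_0$ cannot handle the case $w\in W_8$, $x=e_0$. In $w_\alpha c_0 w_{\sigma_k}e_0$ the letter $e_0$ sits to the \emph{right} of $c_0$, while the relation only rewrites an $e_0$ immediately to its \emph{left}; there is no way to bring $e_0$ across $w_{\sigma_k}c_0$ with it. The paper instead uses $e_0=e_0b_0^{n-2}f_0b_0$ to manufacture a trailing $f_0$ and then invokes Lemma~\ref{PwEnd}. The relation $e_0c_0=c_0a_0c_0$ is reserved for the case $x=c_0$ with $k\geqslant2$, where two copies of $c_0$ bracket a short permutation word. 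Similarly, there is no ``borderline $x=f_0$, $k=2$'' subcase here: because $D_1$ (unlike $B_1$) does not impose $|\im(\alpha)|\geqslant3$, Lemma~\ref{PwEnd} closes the whole $x=f_0$ case in one stroke, and the wrap-around relation $dc_0zb_0^{n-2}=a_0(b_0e_0)^{n-3}dc_0$ plays no part.

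Second, your treatment of $x=z$ for $W_8$ and of the base case $\alpha\notin D_1$ misses the key manoeuvres. For $w_\alpha c_0w_{\sigma_k}z\sim w_\alpha c_0z$, the paper splits on whether $0\in\im(\alpha)$. When $0\notin\im(\alpha)$ (forcing $0\notin\dom(\alpha)$) one may insert $d$ on the left of $c_0$ via $w_\alpha\sim w_\alpha d$ and only \emph{then} apply the wrap-around relation, landing in $W_8$. When $0\in\im(\alpha)$ one inserts the collapsing map $\beta=\transf{0&1&2&\cdots&n-1\\0&1&2&\cdots&2}\in\PT_{n-2}^{0,1}$ between $c_0$ and $z$ (using $\beta z=z$), commutes $w_\beta$ past $c_0$ by Lemma~\ref{r1w}, and obtains $w_{\alpha\beta}c_0zb_0^0\in W_9$; the relations $c_0zc_0=zc_0$ and $z^2c_0=zc_0$ you cite do not produce this. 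Likewise, in the base case with $1\in\im(\alpha)$ and $\alpha\notin D_1$ (so $\im(\alpha)\subseteq\{0,1\}$), the reduction hinges on $\alpha=\alpha z^2$ together with $z^2c_0=zc_0$, which converts $w_\alpha c_0$ into $w_{\alpha z}c_0$ with $\alpha z\in D_1$ or $\alpha z$ collapsing under $f_0c_0=f_0$; simply observing that $\alpha c_0\in\Gamma_1$ does not tell you how to reach a $W_8$ word syntactically.
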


\begin{proof}
In this proof, we denote $\sim_{R_\mathrm{w}}$ simply by $\sim$.

Let $w\in W_\mathrm{w}$.

First, let us suppose that $w\in W_\mathrm{s}$. If $x\neq c_{0}$, then there exists $w'\in W_\mathrm{s} \subseteq W_\mathrm{w}$ such that $wx \sim w'$, by Lemma 3.3.
So, suppose that $x=c_0$ and let $\alpha \in \PsEnd(S_{n})$ be such that $w=w_\alpha$.
If $\alpha \in D_1$, then $wc_{0} = w_\alpha c_0 = w_\alpha c_0 w_{\sigma_1} \in W_8 \subseteq W_\mathrm{w}$.
Hence, suppose that $\alpha \notin D_1$.
It follows that $\alpha \notin D_k$, for $1 \leqslant k \leqslant n-1$, and thus either
$1 \notin \im(\alpha)$ or $0\in \dom(\alpha)$ and $0\alpha \neq 0$ or $0\notin \dom(\alpha)$ and $|\im(\alpha)| \leqslant 1$.

If $1 \notin \im(\alpha)$, then $\alpha c_{0} = \alpha f_{0}c_{0}$ and thus, we have
$$
wc_{0} = w_\alpha c_{0} \sim w_\alpha f_{0}c_{0} \sim w_\alpha f_{0} \sim w_{\alpha f_{0}} \in W_\mathrm{s}.
$$

Therefore, suppose that $1 \in \im(\alpha)$.
Then, $0\in \dom(\alpha)$ and $0\alpha \neq 0$ or $0\notin \dom(\alpha)$ and $|\im(\alpha)| \leqslant 1$.
It follows that $\{1\}\subseteq \im(\alpha) \subseteq \{0,1\}$ and $\alpha = \alpha z^2$,
whence
$$
wc_{0} = w_\alpha c_{0} \sim w_{\alpha z^2} c_{0} \sim w_\alpha z^2 c_{0} \sim w_\alpha zc_{0}\sim w_{\alpha z}c_{0}.
$$
If $\im(\alpha)=\{0,1\}$, then $\alpha z \in D_1$ and so $wc_{0} \sim w_{\alpha z}c_{0} w_{\sigma_1}\in W_8$.
On the other hand, if $\im(\alpha)=\{1\}$, then $\alpha z=\alpha zf_0$ and so
\begin{equation}\label{w=1}
wc_{0} \sim w_{\alpha z}c_{0}\sim  w_{\alpha z f_0}c_{0} \sim  w_{\alpha z} f_0c_{0}\sim  w_{\alpha z} f_{0} \sim  w_{\alpha z f_{0}}\in W_\mathrm{s}.
\end{equation}

\smallskip

Next, we suppose that $w \in W_8$ and consider each case for $x \in \{a_{0},b_{0},e_{0},f_{0},d,z,c_{0}\}$.
Let $1\leqslant k\leqslant n-1$ and let $\alpha\in D_k$ be such that $w=w_\alpha c_0 w_{\sigma_k}$.

\smallskip

\noindent\textsc{case} $x=a_{0}$.

Suppose that $k=1$.
If $2\in\im(\alpha)$, then $\alpha\in D_2$ and so
$$
wa_0=w_\alpha c_0 w_{\sigma_1} a_0=w_\alpha c_0 a_0\sim w_\alpha c_0 w_{\sigma_2}\in W_8.
$$
So, suppose that $2\notin \im(\alpha)$. Hence, since $\alpha = \alpha a_{0}f_{0}a_{0}$ and $a_{0}f_{0}a_{0}f_0 = f_{0}a_{0}f_{0}$, we get
\begin{align*}
wa_0=w_\alpha c_0 a_0 \sim w_{\alpha a_0f_0a_0}c_0 a_0 \sim w_{\alpha} a_0f_0a_0c_0 a_0 \sim w_{\alpha} c_0a_0f_0a_0 a_0
\sim w_{\alpha} c_0f_0a_0f_0 \sim\qquad\\
\sim w_{\alpha} c_0a_0f_0a_0f_0 \sim w_{\alpha} a_0f_0a_0c_0f_0\sim w_{\alpha a_0f_0a_0}c_0 \sim w_\alpha c_0 w_{\sigma_1}\in W_8.
\end{align*}

If $k=2$, then $w=w_\alpha c_0 w_{\sigma_2}\sim w_\alpha c_0 a_0$ and, since $D_2 \subset D_1$, we get
$
wa_0 \sim w_\alpha c_0 a_0 a_0 \sim w_\alpha c_{0}w_{\sigma_1} \in W_8.
$

If $3 \leqslant k \leqslant n-1$, then
$\sigma_ka_0=b_{0}^{n-2}a_{0}b_{0}\sigma_k$,
whence $w_{\sigma_k}a_0 \sim b_{0}^{n-2}a_{0}b_{0}w_{\sigma_k}$ and so
$$
wa_0=w_\alpha c_{0}w_{\sigma_k}a_{0} \sim w_\alpha c_{0}b_{0}^{n-2}a_{0}b_{0}w_{\sigma_k} \sim w_\alpha b_{0}^{n-2}a_{0}b_{0}c_{0}w_{\sigma_k} \sim
w_{\alpha b_{0}^{n-2}a_{0}b_{0}}c_{0}w_{\sigma_k} \in W_8,$$
since $\alpha b_{0}^{n-2}a_{0}b_{0} \in D_k$.

\smallskip

\noindent\textsc{case} $x=b_0$.

If $1\leqslant k \leqslant n-2$, then $\sigma_k b_0 = b_{0}a_{0}\sigma_{k+1}$ and so
$$
wb_0=w_\alpha c_0 w_{\sigma_k}b_0 \sim w_\alpha c_0 b_{0}a_{0}w_{\sigma_{k+1}}
\sim w_\alpha b_{0}a_{0}c_{0}w_{\sigma_{k+1}} \sim w_{\alpha b_{0}a_{0}}c_{0}w_{\sigma_{k+1}}.
$$
If $n-1\in \im(\alpha)$, then $\alpha b_{0}a_{0} \in D_{k+1}$, whence $wb_0 \sim w_{\alpha b_{0}a_{0}}c_{0}w_{\sigma_{k+1}} \in W_8$.
So, suppose that $n-1\notin \im(\alpha)$.
Then, $2 \notin \im(\alpha b_{0}a_{0})$ and so $\alpha b_{0}a_{0} = \alpha b_{0}a_{0}(a_{0}f_{0}a_{0})$.
Hence, as $f_0a_0\sigma_{k+1}=f_0a_0\sigma_{k+1}f_0$ and $w_{\sigma_{k+1}}$ is a word on the alphabet $\{a_0,b_0\}$,
we have
\begin{align*}
wb_0\sim w_\alpha b_{0}a_{0}c_{0}w_{\sigma_{k+1}} \sim w_\alpha b_{0}a_{0}a_{0}f_{0}a_{0}c_{0}w_{\sigma_{k+1}} \sim w_\alpha b_{0}a_{0}c_{0}a_{0}f_{0}a_0 w_{\sigma_{k+1}} \sim\qquad\\
\sim w_\alpha b_{0}a_{0}c_{0}f_{0}a_{0}f_{0}a_0w_{\sigma_{k+1}}
\sim w_\alpha b_{0}a_{0}c_{0}f_{0}a_{0}f_{0}a_0w_{\sigma_{k+1}}f_{0} \sim w_\alpha b_{0}a_{0}uc_{0},
\end{align*}
for some word $u$ on the alphabet $\{a_0,b_0,e_0,f_0\}$ such that (as transformation) $u\in \PT_{n-2}^{0,1}$, by Lemma \ref{PwEnd}.
Hence, we get $\alpha b_{0}a_{0}u \in D_1$ and so
$wb_0 \sim w_\alpha b_{0}a_{0}uc_{0} \sim w_{\alpha b_{0}a_{0}u}c_{0}w_{\sigma_1} \in W_8$.

If $k = n-1$, then $\sigma_{n-1} = b_0^{n-2}$ and, as $D_{n-1} \subset D_1$, we obtain
$$
wb_0 = w_\alpha c_0 w_{\sigma_{n-1}}b_0 \sim w_\alpha c_0b_0^{n-2}b_0 \sim w_\alpha c_0b_0^{n-1}  \sim w_\alpha c_0 w_{\sigma_1} \in W_{8}.
$$

\smallskip

\noindent\textsc{case} $x=e_0$.

Since $e_0=e_0b_0^{n-2}f_0b_0$, we have
$$
we_0=w_\alpha c_{0}w_{\sigma_k}e_{0} \sim w_\alpha c_{0}w_{\sigma_k}e_{0}b_{0}^{n-2}f_{0}b_{0}
\sim w_\alpha uc_{0}b_{0}\sim w_{\alpha u}c_0w_{\sigma_1}b_0,
$$
for some word $u$ on the alphabet $\{a_0,b_0,e_0,f_0\}$ such that (as transformation) $u\in \PT_{n-2}^{0,1}$, by Lemma \ref{PwEnd}.
Moreover, clearly, $\alpha u\in D_1$.
Thus, $w_{\alpha u}c_0w_{\sigma_1}\in W_8$ and,
by the previous case, there exists $w' \in W_8$ such that $we_0\sim w_{\alpha u}c_0w_{\sigma_1}b_0 \sim w'$.

\smallskip

\noindent\textsc{case} $x=f_0$.

By Lemma \ref{PwEnd},
there exists a word $u$ on the alphabet $\{a_0,b_0,e_0,f_0\}$ such that
$c_0w_{\sigma_k}f_0\sim uc_0$ and
(as transformation) $u\in \PT_{n-2}^{0,1}$,
whence
$$
wf_0=w_\alpha c_{0}w_{\sigma_k}f_{0}\sim w_\alpha u c_0\sim w_{\alpha u} c_0 w_{\sigma_1}\in W_8,
$$
since $\alpha u \in D_1$.

\smallskip

\noindent\textsc{case} $x=d$.

In this case, we have
$
wd=w_\alpha c_0w_{\sigma_k}d \sim w_\alpha c_{0}d w_{\sigma_k} \sim w_\alpha f_{0}dw_{\sigma_k} \sim w_{\alpha f_{0}d\sigma_k} \in W_\mathrm{s}.
$

\smallskip

\noindent\textsc{case} $x=z$.

Observe that, as $\sigma_k z=z$, we obtain $wz=w_\alpha c_0w_{\sigma_k}z\sim w_\alpha c_0z$.

First, suppose that $0\notin \im(\alpha)$. Then,
$$
wz\sim w_\alpha c_{0}z \sim w_\alpha dc_{0}z \sim w_\alpha dc_{0}zb_{0}^{n-2}b_{0} \sim w_\alpha a_{0}(b_{0}e_{0})^{n-3}dc_{0}b_{0}
\sim w_{\alpha a_{0}(b_{0}e_{0})^{n-3}d}c_{0}w_{\sigma_1}b_{0}.
$$
Next, observe that, as $0\notin \im(\alpha)$, we must have $0\notin\dom(\alpha)$ and $\{1,t\}\subseteq\im(\alpha)$, for some $2\leqslant t\leqslant n-1$,
from which follows that $0\not\in\dom(\alpha a_{0}(b_{0}e_{0})^{n-3}d)$ and $\im(\alpha a_{0}(b_{0}e_{0})^{n-3}d)=\{1,n-1\}$,
whence $\alpha a_{0}(b_{0}e_{0})^{n-3}d\in D_1$.
Therefore, by the case $x=b_0$, there exists $w'\in W_8$ such that $wz\sim w_{\alpha a_{0}(b_{0}e_{0})^{n-3}d}c_{0}w_{\sigma_1}b_{0}\sim w'$.

Now, suppose that $0\in\im(\alpha)$. Since $\alpha\in D_k$, we must have $0\in\dom(\alpha)$ and $0\alpha=0$. Furthermore, $\{0,1\}\subseteq\im(\alpha)$.
Let $\beta=\transf{0&1&2&\cdots&n-1\\0&1&2&\cdots&2}\in\PT_{n-2}^{0,1}$.
Then, $\alpha\beta\in D$, $\beta z=z$ and, by Lemma \ref{r1w}, $c_0w_\beta \sim w_\beta c_0$.
Hence,
$$
wz\sim w_\alpha c_0 z\sim w_\alpha c_0 w_\beta z\sim w_\alpha  w_\beta c_0 z\sim w_{\alpha\beta} c_0 z b_0^0 \in W_9.
$$

\smallskip

\noindent\textsc{case} $x=c_{0}$.

If $k=1$, then $wc_0=w_\alpha c_0 w_{\sigma_1}c_0 \sim w_\alpha c_0c_0 \sim w_\alpha c_0 \sim w_\alpha c_0 w_{\sigma_1} \in W_{8}$.

So, suppose that $2 \leqslant k \leqslant n-1$. Then, we have $f_0\sigma_k = f_0\sigma_k(b_0a_0)^{n-k}a_0f_0a_0(b_0a_0)^{k-2}$ and,
by Lemma \ref{PwEnd},
there exists a word $u$ on the alphabet $\{a_0,b_0,e_0,f_0\}$ such that
$c_0f_0w_{\sigma_k}(b_0a_0)^{n-k}a_0f_0\sim uc_0$ and
(as transformation) $u\in \PT_{n-2}^{0,1}$.
Hence,
\begin{align*}
wc_0 = w_\alpha c_0 w_{\sigma_k}c_0 \sim w_\alpha c_0 f_0 w_{\sigma_k}c_0 \sim
w_\alpha c_0 f_0w_{\sigma_k}(b_0a_0)^{n-k}a_0f_0a_0(b_0a_0)^{k-2} c_0 \sim
w_\alpha u c_0 a_0(b_0a_0)^{k-2} c_0 \sim \qquad \\ \sim
w_\alpha u c_0 a_0c_0(b_0a_0)^{k-2}  \sim
w_\alpha u e_0c_0(b_0a_0)^{k-2}  \sim w_\alpha u e_0(b_0a_0)^{k-2} c_0  \sim w_{\alpha u e_0(b_0a_0)^{k-2}} c_0.
\end{align*}
Since $1u e_0(b_0a_0)^{k-2}=1$ and $1\in\im(\alpha)$, we have $1\in\im(\alpha u e_0(b_0a_0)^{k-2})$.
In addition, if $0\in\dom(\alpha)$, then $0\in\im(\alpha u e_0(b_0a_0)^{k-2})$, since $0\alpha=0$ and $0u e_0(b_0a_0)^{k-2}=0$,
whence $\alpha u e_0(b_0a_0)^{k-2}\in D_1$ and so
$$
wc_0 \sim w_{\alpha u e_0(b_0a_0)^{k-2}} c_0 \sim w_{\alpha u e_0(b_0a_0)^{k-2}} c_0 w_{\sigma_1}\in W_8.
$$
On the other hand, suppose that $0\notin\dom(\alpha)$. Then, $0\notin\dom(\alpha u e_0(b_0a_0)^{k-2})$.
If $\{1\}\subsetneq \im(\alpha u e_0(b_0a_0)^{k-2})$, then we have again $\alpha u e_0(b_0a_0)^{k-2}\in D_1$, whence
$$
wc_0 \sim w_{\alpha u e_0(b_0a_0)^{k-2}} c_0 \sim w_{\alpha u e_0(b_0a_0)^{k-2}} c_0 w_{\sigma_1}\in W_8.
$$
If $\im(\alpha u e_0(b_0a_0)^{k-2})=\{1\}$, then
$$
w c_0\sim w_{\alpha u e_0(b_0a_0)^{k-2}} c_0\sim w',
$$
for some $w' \in W_\mathrm{s}$, by a previous case (just like in (\ref{w=1})).

\smallskip

At last, we suppose that $w \in W_9$ and study each case for $x \in \{a_{0},b_{0},e_{0},f_{0},d,z,c_{0}\}$.
Let $0\leqslant \ell\leqslant n-2$ and let $\alpha\in D$ be such that $w=w_\alpha c_0 z b_0^\ell$.

\smallskip

\noindent\textsc{case} $x=a_{0}$.

By observing that, $za_0=zb_0$, $zb_0a_0=z$ and $zb_0^\ell a_0=zb_0^\ell$, for $2\leqslant\ell\leqslant n-2$, we obtain
$$
wa_0=w_\alpha c_0 z b_0^\ell a_0 \sim \left\{
\begin{array}{ll}
 w_\alpha c_0 z  b_0 \in W_9 & \mbox{if $\ell=0$}\\
 w_\alpha c_0 z  \in W_9  & \mbox{if $\ell=1$}\\
 w_\alpha c_0 z  b_0^\ell   \in W_9 & \mbox{if $2\leqslant\ell\leqslant n-2$.}
\end{array}\right.
$$

\smallskip

\noindent\textsc{case} $x=b_0$.

In this case, we have $wb_0=w_\alpha c_0 z b_0^{\ell+1}\in W_9$, for $0\leqslant\ell<n-2$, and
$wb_0 = w_\alpha c_0 z b_0^{n-1}\sim w_\alpha c_0 z\in W_9$, for $\ell=n-2$.

\smallskip

\noindent\textsc{case} $x=e_0$.

Since $ze_0=zb_0e_0=z$ and $zb_0^\ell e_0= zb_0^\ell$, for $2\leqslant\ell\leqslant n-2$, we get
$$
we_0=w_\alpha c_0 z b_0^\ell e_0 \sim \left\{
\begin{array}{ll}
 w_\alpha c_0 z  \in W_9  & \mbox{if $\ell=0,1$}\\
 w_\alpha c_0 z  b_0^\ell   \in W_9 & \mbox{if $2\leqslant\ell\leqslant n-2$.}
\end{array}\right.
$$

\smallskip

\noindent\textsc{case} $x=f_0$.

Now, observe that $zb_0^\ell f_0 = zb_0^\ell$, for $1\leqslant\ell\leqslant n-2$, and $zf_0=dz$. Hence,
$$
wf_0=w_\alpha c_0 z b_0^\ell f_0 \sim \left\{
\begin{array}{ll}
 w_\alpha c_0 dz \sim   w_\alpha f_0 dz \sim   w_{\alpha f_0 dz} \in W_s  & \mbox{if $\ell=0$}\\
 w_\alpha c_0 z  b_0^\ell   \in W_9 & \mbox{if $1\leqslant\ell\leqslant n-2$.}
\end{array}\right.
$$

\smallskip

\noindent\textsc{case} $x=d$.

Notice that, $zb_0^\ell d = (f_0b_0)^{n-1}f_0zb_0^\ell$ and, by Lemma \ref{PwEnd},
there exists a word $u$ on the alphabet $\{a_0,b_0,e_0,f_0\}$ such that (as transformation) $u\in \PT_{n-2}^{0,1}$ and
$ c_0 (f_0b_0)^{n-1}f_0 \sim u c_0$. Then, as clearly $\alpha u\in D$, we have
$$
wd = w_\alpha c_0 z b_0^\ell d \sim  w_\alpha c_0 (f_0b_0)^{n-1}f_0zb_0^\ell \sim w_\alpha u c_0 zb_0^\ell \sim w_{\alpha u} c_0 zb_0^\ell \in W_9.
$$

\smallskip

\noindent\textsc{case} $x=z$.

In order to deal with this case, let us recall that we already proved above that, if $w_1\in W_8$ and $y\in\{b_0,e_0\}$, then there exists $w_2\in W_8$ such that $w_1y\sim w_2$. It follows immediately that, if $w_1\in W_8$ and $v\in\{b_0,e_0\}^*$, then there exists $w_2\in W_8$ such that $w_1v\sim w_2$.

Now, observe that, $b_0^\ell z=z$, $z^2=(e_0b_0)^{n-3}e_0$ and,
as $D\subset D_1$, we have $w_\alpha c_0\in W_8$. Thus,
$$
wz = w_\alpha c_0 z b_0^\ell z \sim w_\alpha c_0 z^2 \sim  w_\alpha c_0 (e_0b_0)^{n-3}e_0 \sim w',
$$
for some $w'\in W_8$.

\smallskip

\noindent\textsc{case} $x=c_{0}$.

If $\ell=0$, then
$$
w c_0=  w_\alpha c_{0}zc_{0} \sim w_\alpha zc_{0} \sim w_\alpha z^{2}c_{0} \sim w_{\alpha z^{2}}c_{0}w_{\sigma_1} \in W_8,
$$
since $\alpha z^2 \in D_1$.

Finally, for $1\leqslant\ell\leqslant n-2$, we have $zb_0^\ell =zb_0^\ell f_0$ and so
$$
w c_0= w_\alpha c_{0}zb_{0}^\ell c_0 \sim w_\alpha c_{0}zb_{0}^\ell f_0 c_0 \sim w_\alpha c_{0}zb_{0}^\ell f_0  \sim w_\alpha c_{0}zb_{0}^\ell =w\in W_9.
$$

This concludes the proof.
\end{proof}

Since all relations of $R_\mathrm{w}$ are satisfied by the generators $a_0$, $b_0$, $e_0$, $f_0$, $d$, $c_0$ and $z$ of $\PwEnd(S_n)$,
bearing in mind Lemmas \ref{PwEnd2} and \ref{PwEnd1}, we can now conclude the following result.

\begin{theorem}\label{presPw}
The presentation $\langle a_0,b_0,e_0,f_0,c_0,d,z\mid R_\mathrm{w}\rangle$ defines the monoid $\PwEnd(S_n)$.
\end{theorem}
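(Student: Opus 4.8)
The plan is to invoke the Guess and Prove method of Theorem~\ref{ruskuc} with $M=\PwEnd(S_n)$, generating set $X=\{a_0,b_0,e_0,f_0,c_0,d,z\}$, relation set $R=R_\mathrm{w}$, and candidate set of canonical forms $W=W_\mathrm{w}$. Almost all of the substantive work has already been carried out in the preceding lemmas, so this final step amounts to assembling the three hypotheses of Theorem~\ref{ruskuc} and reading off the conclusion.

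First I would dispatch Condition~1: the generators $a_0,b_0,e_0,f_0,c_0,d,z$ of $\PwEnd(S_n)$ must satisfy every relation of $R_\mathrm{w}$. This was already recorded in the paragraph introducing $R_\mathrm{w}$, and it is a routine (if tedious) matter of composing the explicit transformations displayed in Section~\ref{prelim} and checking the equalities one by one; I would simply cite that observation. Condition~3, the inequality $|W_\mathrm{w}|\le|\PwEnd(S_n)|$, is supplied in the sharper form $|W_\mathrm{w}|=|\PwEnd(S_n)|$ by Lemma~\ref{PwEnd2}, so nothing further is required there.

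The only step that is not purely mechanical is Condition~2, and rather than verifying it directly I would appeal to the criterion stated just after Theorem~\ref{ruskuc}: if the empty word lies in $W$ and, for every $w\in W$ and every letter $x\in X$, there is some $w'\in W$ with $wx\sim_R w'$, then $W$ automatically satisfies Condition~2. The empty word belongs to $W_\mathrm{w}$ since it already lies in $W_\mathrm{s}\subseteq W_\mathrm{w}$, and the required closure of $W_\mathrm{w}$ under right multiplication by each generator, modulo $\sim_{R_\mathrm{w}}$, is precisely the content of Lemma~\ref{PwEnd1}. With all three conditions in hand, Theorem~\ref{ruskuc} yields that $\langle a_0,b_0,e_0,f_0,c_0,d,z\mid R_\mathrm{w}\rangle$ defines $\PwEnd(S_n)$, as claimed.

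The genuine difficulty of the whole argument is not located in this theorem at all, but upstream, in Lemmas~\ref{PwEnd2} and~\ref{PwEnd1}. The hard part is Lemma~\ref{PwEnd1}: one must push each of the seven generators past every family of canonical words $W_\mathrm{s}$, $W_8$, $W_9$ and land back inside $W_\mathrm{w}$, with the delicate cases being those involving $c_0$ together with the commutation-type relations such as $c_0b_0^{n-2}a_0b_0=b_0^{n-2}a_0b_0c_0$, where one repeatedly transports $c_0$ through a word of $\PT_{n-2}^{0,1}$ by means of Lemmas~\ref{r1w} and~\ref{PwEnd} and must confirm that the resulting transformation still lies in the appropriate $D_k$ or $D$. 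Since those lemmas are already established in the excerpt, the present theorem follows immediately from them.
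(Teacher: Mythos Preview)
Your proposal is correct and follows exactly the paper's own approach: invoke Theorem~\ref{ruskuc} with $W=W_\mathrm{w}$, noting that the relations are satisfied (as observed when $R_\mathrm{w}$ was introduced), that $|W_\mathrm{w}|=|\PwEnd(S_n)|$ by Lemma~\ref{PwEnd2}, and that Condition~2 holds via the closure criterion together with Lemma~\ref{PwEnd1}. The paper's proof is in fact just the single sentence preceding the theorem, so your write-up is, if anything, more detailed than the original.
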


By taking the set $R_\mathrm{s}$ of defining relations for $\PsEnd(S_n)$ based on the presentations by Popova for $\PT(\Omega_{n-1})$
\cite{Popova:1961} and by Moore for $\Sym(\Omega_{n-1})$ \cite{Moore:1897}, we have $|R_\mathrm{w}|=n+40$.

\section{A presentation for $\IEnd(S_n)$}

Let $n\geqslant 4$ and consider the alphabet $A=\{a_0,b_0,e_1,d,z_1\}$ and the set $R$ formed by the following $3n+9$ monoid relations:
\begin{enumerate}
\item[$(R_1)$] $a_0^2=1$;
\item[$(R_2)$] $b_0^{n-1}=1$;
\item[$(R_3)$] $(b_0a_0)^{n-2}=1$;
\item[$(R_4)$] $(a_0b_0^{n-2}a_0b_0)^3=1$;
\item[$(R_5)$] $(a_0b_0^{n-1-j}a_0b_0^j)^2=1$, $j=2,\ldots,n-3$;
\item[$(R_6)$] $e_1^2=e_1$ and $d^2=d$;
\item[$(R_7)$] $a_0e_1=e_1a_0$, $da_0=a_0d$, $db_0=b_0d$ and $de_1=e_1d$;
\item[$(R_{8})$] $b_0a_0b_0^{n-2}e_1b_0a_0b_0^{n-2}=b_0^{n-2}e_1b_0$;
\item[$(R_{9})$] $(b_0^{n-2}e_1b_0a_0)^2=(a_0b_0^{n-2}e_1b_0)^2$;
\item[$(R_{10})$] $b_0^{n-2}e_1b_0a_0b_0^{n-2}e_1b_0=(a_0b_0^{n-2}e_1b_0)^2$;
\item[$(R_{11})$] $z_1^3=z_1$;
\item[$(R_{12})$] $z_1b_0=z_1a_0$;
\item[$(R_{13})$] $b_0^{n-2}z_1=a_0z_1$;
\item[$(R_{14})$] $a_0b_0^jz_1=b_0^jz_1$, $j=1,\ldots,n-3$;
\item[$(R_{15})$] $e_1b_0z_1=z_1d$;
\item[$(R_{16})$] $e_1b_0^jz_1=b_0^jz_1$, $j=2,\ldots,n-3$;
\item[$(R_{17})$] $(e_1b_0)^{n-3}e_1=z_1^2a_0b_0^{n-4}$;
\item[$(R_{18})$] $dz_1^2=z_1a_0z_1$;
\item[$(R_{19})$] $(dz_1)^2=dz_1d$.
\end{enumerate}

The monoid $\PAut(S_n)$ is defined by the presentation $\langle A\mid R\rangle$, as proved by Fernandes and Paulista in \cite{Fernandes&Paulista:2023}.
Observe that $R$ contains a set of defining relations of $\I(\Omega_{n-1})\zeta$.

\smallskip

Next, let $B=A\cup\{c\}=\{a_0,b_0,e_1,c,d,z_1\}$ and let $\bar R$ be the set of relations $R$ together with the following $10$ relations on the alphabet  $B$:
\begin{enumerate}
\item[$(R_{20})$]  $cz_1=z_1^2d$;
\item[$(R_{21})$]  $z_1c=z_1^2b_0^{n-2}e_1b_0$;
\item[$(R_{22})$]  $ca_0z_1=a_0ca_0z_1^2$;
\item[$(R_{23})$]  $ce_1=e_1c$;
\item[$(R_{24})$]  $dc=cb_0^{n-2}e_1b_0=c$;
\item[$(R_{25})$]  $cd=db_0^{n-2}e_1b_0=c^2$;
\item[$(R_{26})$]  $cb_0a_0=b_0a_0c$ and $cb_0^{n-2}a_0b_0=b_0^{n-2}a_0b_0c$.
\end{enumerate}

Notice that, it is easy to verify that all relations of $\bar R$ are satisfied by the generators $B$ of $\IEnd(S_n)$.

\smallskip

Let $W_\mathrm{A} \subseteq \{a_0,b_0,e_1,d,z_1\}^*$ be a set of canonical forms for $\PAut(S_n)$ on the generators $a_0, b_0, e_1, d$ and $z_1$.

Let $Q$ be the set of all sequences $(i_{0},i_{1},\ldots,i_{k}\mid j_{1},\ldots,j_{k})$ of elements in $\Omega_{n-1}$ such that
$k\in \{1,\ldots,n-2\}$, the elements $i_{0},i_{1},\ldots,i_{k}$ are pairwise distinct and $j_{1}<\cdots<j_{k}$.

Let $q=(i_{0},i_{1},\ldots,i_{k}\mid j_{1},\ldots,j_{k})\in Q$. Then, we put
\begin{enumerate}
\item $j_{q}=\min(\Omega_{n-1}\setminus\{1,j_{2},\ldots,j_{k}\})$;

\item $w_{q}'$ to be the word in $W_\mathrm{A}$ representing the transformation
$\transf{
0 & i_{0} & i_{1} & i_{2} & \cdots  & i_{k} \\
0 & 1 & j_{q} & j_{2} & \cdots  & j_{k}
} \in \PAut(S_n)$;

\item $w_{q}''$ to be the word in $W_\mathrm{A}$ representing the transformation
$\transf{
0 & j_{q} & j_{2} & \cdots & j_{k} \\
0 & j_{1} & j_{2} & \cdots & j_{k}
} \in \PAut(S_n)$.
\end{enumerate}

Consider the following sets of words on the alphabet $\{a_0,b_0,e_1,c,d,z_1\}$:
$$
W_{10} = \{dw_{q}'cw_{q}'' \mid q \in Q\}
\quad\text{and}\quad
W_\mathrm{I} = W_\mathrm{A} \cup W_{10}.
$$

Notice that, it is a routine matter to show that the mapping $Q\rightarrow W_{10}$, $q\mapsto dw_{q}'cw_{q}''$, for $q\in Q$, is a bijection.
On the other hand, clearly, a typical element $w$ of $K_0=\{\alpha\in\I(\Omega_{n-1}^0)\mid 0\not\in\dom(\alpha), 0\in\im(\alpha)$ and $|\im(\alpha)|\geq 2\}$
can be represented in the form
$$
w=\left(
\begin{array}{ccccc}
i_{0} & i_{1} & i_{2} & \cdots & i_{k} \\
0 & j_{1} & j_{2} & \cdots & j_{k}
\end{array}%
\right)
$$
with $q=(i_{0},i_{1},\ldots,i_{k}\mid j_{1},\ldots,j_{k})\in Q$
(and so, as transformations, $w=dw_{q}'cw_{q}''$),
whence $|K_0|=|Q|=|W_{10}|$.
Therefore, as $\IEnd(S_n)$ is the disjoint union of $\PAut(S_n)$ and
$K_0$, we immediately have the following lemma.

\begin{lemma}\label{IEnd3}
$|W_\mathrm{I}| = |\IEnd(S_{n})|$.
\end{lemma}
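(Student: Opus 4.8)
The plan is to reduce the statement to counting the two blocks of the disjoint decomposition $\IEnd(S_n)=\PAut(S_n)\cup K_0$ and matching them with the two blocks of $W_\mathrm{I}=W_\mathrm{A}\cup W_{10}$. First I would observe that this latter union is disjoint as a set of words: every word of $W_{10}$ contains the letter $c$, whereas $W_\mathrm{A}\subseteq\{a_0,b_0,e_1,d,z_1\}^*$ contains no occurrence of $c$. Hence $|W_\mathrm{I}|=|W_\mathrm{A}|+|W_{10}|$, and it suffices to prove the two equalities $|W_\mathrm{A}|=|\PAut(S_n)|$ and $|W_{10}|=|K_0|$.

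The first of these is immediate: by definition $W_\mathrm{A}$ is a set of canonical forms for $\PAut(S_n)$, so $|W_\mathrm{A}|=|\PAut(S_n)|$.

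For the second equality I would route the count through the index set $Q$, establishing $|W_{10}|=|Q|=|K_0|$. The map $Q\to W_{10}$, $q\mapsto dw_q'cw_q''$, is surjective by construction, and injective because distinct sequences $q\in Q$ determine distinct transformations $w_q'$ and $w_q''$ of $\PAut(S_n)$, hence distinct canonical words; this gives $|W_{10}|=|Q|$. On the other side, every $\alpha\in K_0$ satisfies $0\notin\dom(\alpha)$, $0\in\im(\alpha)$ and $|\im(\alpha)|\geqslant2$, so it admits a unique row representation $\alpha=\transf{i_0&i_1&\cdots&i_k\\0&j_1&\cdots&j_k}$ with $q=(i_0,i_1,\ldots,i_k\mid j_1,\ldots,j_k)\in Q$, the ordering $j_1<\cdots<j_k$ forcing uniqueness. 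This yields a bijection between $K_0$ and $Q$, so $|K_0|=|Q|$. Combining everything, $|W_\mathrm{I}|=|W_\mathrm{A}|+|W_{10}|=|\PAut(S_n)|+|K_0|=|\IEnd(S_n)|$, the last equality using that $\IEnd(S_n)$ is the disjoint union of $\PAut(S_n)$ and $K_0$.

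The only point demanding genuine care—the main, though modest, obstacle—is the injectivity of $q\mapsto dw_q'cw_q''$: one must confirm that the full data $(i_0,\ldots,i_k\mid j_1,\ldots,j_k)$ can be recovered from the pair $(w_q',w_q'')$, i.e. that the auxiliary index $j_q=\min(\Omega_{n-1}\setminus\{1,j_2,\ldots,j_k\})$ introduces no collisions between the two components. Everything else is routine bookkeeping; indeed the paragraph preceding the statement already records the bijection $Q\to W_{10}$ and the representation of elements of $K_0$ by sequences in $Q$, so once these are in hand the lemma follows at once.
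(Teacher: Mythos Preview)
Your proposal is correct and follows essentially the same approach as the paper: the paper's proof is precisely the paragraph preceding the lemma, which records the bijection $Q\to W_{10}$, the bijection $K_0\to Q$ via the unique row representation, and the disjoint decomposition $\IEnd(S_n)=\PAut(S_n)\cup K_0$, exactly as you do. Your additional remark that the injectivity of $q\mapsto dw_q'cw_q''$ (equivalently, the recoverability of $q$ from the pair $(w_q',w_q'')$) is the one point requiring care is apt; the paper dismisses it as ``routine'', and indeed one recovers $j_q,j_1,\ldots,j_k$ from $w_q''$ (the unique non-fixed nonzero domain point is $j_q$ unless $w_q''$ is a partial identity, in which case $j_q=j_1=\min$ of the nonzero domain) and then $i_0,\ldots,i_k$ from $w_q'$.
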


\smallskip

Now, let $\I_{n-2}^{0,1}=\{\alpha\in\I(\Omega_{n-1}^0)\mid 0\alpha=0, 1\alpha=1, \{2,\ldots,n-1\}\alpha\subseteq\{2,\ldots,n-1\}\}$
and recall the permutations $a'_0=b_0^{n-2}a_0b_0$ and $b'_0=b_0a_0$ of $\Omega_{n-1}^0$.
Clearly, $\I_{n-2}^{0,1}$ is a submonoid of $\I(\Omega_{n-1})\zeta$ isomorphic to $\I(\Omega_{n-2})$ that admits
$\{a'_0,b'_0,e_1\}$ as a generating set.
Then, since $\bar{R}$ contains a set of defining relations of $\I(\Omega_{n-1})\zeta$,
in view of the relations $R_{23}$ and $R_{26}$, similarly to Lemma \ref{r1}, we can prove the following lemma:
\begin{lemma}\label{IEnd2}
Let $w$ be a word on the alphabet $\{a_{0},b_{0},e_{1}\}$ such that (as transformation) $w\in\I_{n-2}^{0,1}$.
Then, $cw \sim_{\bar{R}} wc$.
\end{lemma}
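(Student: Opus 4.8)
The plan is to follow exactly the strategy of Lemma \ref{r1}, now using that $\bar R$ contains a complete set of defining relations for $\I(\Omega_{n-1})\zeta$ on the generators $a_0$, $b_0$ and $e_1$, together with the commuting relations $R_{23}$ and $R_{26}$.

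First I would rewrite $w$ as a product of the three generators of the submonoid. Since, as a transformation, $w\in\I_{n-2}^{0,1}$ and $\{a'_0,b'_0,e_1\}$ generates $\I_{n-2}^{0,1}$, there is a word $w_1$, formed as a product of the letters $a'_0=b_0^{n-2}a_0b_0$, $b'_0=b_0a_0$ and $e_1$, that represents the same transformation as $w$; note that $w_1$ is again a word on $\{a_0,b_0,e_1\}$. As $w$ and $w_1$ are two words on $\{a_0,b_0,e_1\}$ representing the same element of $\I(\Omega_{n-1})\zeta$, and $\bar R$ supplies defining relations of $\I(\Omega_{n-1})\zeta$ on these generators, we obtain $w\sim_{\bar R}w_1$. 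Then I would push $c$ through $w_1$ one letter at a time: relation $R_{23}$ gives $ce_1\sim_{\bar R}e_1c$, while the two relations of $R_{26}$ give $cb'_0\sim_{\bar R}b'_0c$ and $ca'_0\sim_{\bar R}a'_0c$. Thus $c$ commutes modulo $\bar R$ with each generator occurring in $w_1$, and a routine induction on the length of $w_1$ yields $cw_1\sim_{\bar R}w_1c$. Combining, $cw\sim_{\bar R}cw_1\sim_{\bar R}w_1c\sim_{\bar R}wc$, as required.

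The one point requiring care --- and it is identical to the corresponding point in Lemma \ref{r1} --- is the step $w\sim_{\bar R}w_1$. It does not follow merely from $w$ and $w_1$ being equal as transformations in the submonoid; rather it relies crucially on $\bar R$ containing defining relations for the full monoid $\I(\Omega_{n-1})\zeta$, so that equality of transformations is already detected by $\sim_{\bar R}$. Everything else reduces to commuting $c$ past the three chosen generators, which is immediate from $R_{23}$ and $R_{26}$.
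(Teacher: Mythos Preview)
Your proposal is correct and follows exactly the approach the paper intends: the paper does not spell out a proof but simply remarks that, since $\bar R$ contains defining relations of $\I(\Omega_{n-1})\zeta$ and in view of $R_{23}$ and $R_{26}$, the argument is analogous to Lemma~\ref{r1}. Your write-up supplies precisely those details---rewriting $w$ via the generators $a'_0,b'_0,e_1$ of $\I_{n-2}^{0,1}$ and then commuting $c$ past each using $R_{23}$ and $R_{26}$---and your explicit remark about why $w\sim_{\bar R}w_1$ holds is the key point the paper leaves implicit.
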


\begin{lemma}\label{IEnd1}
$b_{0}^{n-2}e_{1}b_{0}c \sim_{\bar{R}} c^{2}$.
\end{lemma}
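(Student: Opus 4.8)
The plan is to reduce the statement entirely to the two ``mixed'' relations $R_{24}$ and $R_{25}$ (which relate $c$ and $d$) together with the commutation relations in $R_7$. For brevity write $p=b_0^{n-2}e_1b_0$, so the goal is $pc\sim_{\bar R}c^2$. Relation $R_{24}$ supplies $dc\sim_{\bar R}c$ (and $cp\sim_{\bar R}c$), while $R_{25}$ supplies $cd\sim_{\bar R}c^2$ (and $dp\sim_{\bar R}c^2$); these are exactly the ingredients I would use.

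First I would record the auxiliary identity $c^3\sim_{\bar R}c^2$. This follows at once by reading $c^3$ as $c^2c$, rewriting the prefix $c^2$ as $cd$ via $R_{25}$ to obtain $c(dc)$, and then applying $R_{24}$ in the form $dc\sim_{\bar R}c$ to get $c\cdot c=c^2$.

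Next, since $d$ commutes with each of $b_0$ and $e_1$ by $R_7$, it commutes with the whole word $p$: pushing $d$ rightwards through the $n-2$ copies of $b_0$, then through $e_1$, then through the final $b_0$ yields $dp\sim_{\bar R}pd$. Combining this with the half of $R_{25}$ that reads $dp\sim_{\bar R}c^2$ gives $pd\sim_{\bar R}c^2$. The lemma then follows from the chain
$$
pc \sim_{\bar R} p(dc) \sim_{\bar R} (pd)c \sim_{\bar R} c^2c = c^3 \sim_{\bar R} c^2,
$$
where the first step rewrites the trailing $c$ as $dc$ via $R_{24}$ (using that $\sim_{\bar R}$ is a congruence), the third step substitutes $pd\sim_{\bar R}c^2$, and the last step is the auxiliary identity.

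The only genuine obstacle is spotting the right opening move: the left-hand side $pc$ must first be \emph{inflated} by inserting $d$ (through $c=dc$) so that the commutation $dp\sim_{\bar R}pd$ and the relation $dp\sim_{\bar R}c^2$ can be brought to bear; everything afterwards is routine bookkeeping with the commuting relations of $R_7$. It is worth noting that, unlike the analogous computations in Section \ref{pendsn}, no appeal to the structure of $\I_{n-2}^{0,1}$ or to Lemma \ref{IEnd2} is available here, precisely because $p$ does not fix $1$ (indeed $1\notin\dom(p)$) and so $p\notin\I_{n-2}^{0,1}$; this is why the argument must route through $d$ rather than through a commutation of $c$ past $p$.
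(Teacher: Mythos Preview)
Your proof is correct and follows essentially the same route as the paper's: insert $d$ via $dc\sim_{\bar R}c$ from $R_{24}$, commute $d$ past $p=b_0^{n-2}e_1b_0$, and then use the $R_{25}$ identity $dp\sim_{\bar R}c^2$ (the paper writes the tail as $dpc\sim cdc\sim c^2$, which is your $c^3\sim c^2$ step unwound). The only cosmetic difference is that you justify $dp\sim_{\bar R}pd$ directly from the commuting relations in $R_7$, whereas the paper observes it as an equality of transformations in $\PAut(S_n)$ and hence a consequence of $R$.
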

\begin{proof}
Since $b_0^{n-2}e_1b_0d=\transf{2&\cdots&n-1\\2&\cdots&n-1}=db_0^{n-2}e_1b_0$, we have
$$
b_{0}^{n-2}e_{1}b_{0}c \sim_{\bar{R}} b_{0}^{n-2}e_{1}b_{0}dc \sim_{\bar{R}} db_{0}^{n-2}e_{1}b_{0}c \sim_{\bar{R}} cdc \sim_{\bar{R}} c^{2},
$$
as required.
\end{proof}

At this stage, recall that the mapping $\zeta:\PT(\Omega_{n-1}^0)\longrightarrow\PT(\Omega_{n-1}^0)$, $\alpha\longmapsto\zeta_\alpha$,
is defined by $\dom(\zeta_\alpha)=\dom(\alpha)\cup\{0\}$, $0\zeta_\alpha=0$ and $\zeta_\alpha|_{\Omega_{n-1}}=\alpha|_{\Omega_{n-1}}$,
for any $\alpha\in\PT(\Omega_{n-1}^0)$.

For $\alpha \in \PAut(S_{n})$, let $w_\alpha$ be the word in $W_\mathrm{A}$ representing the transformation $\alpha$.
If $\alpha$ is the partial identity $\id_Y$ for some subset $Y \subseteq \Omega_{n-1}^{0}$, then we also write $w_Y$ for $w_\alpha$.

\begin{lemma}\label{IEnd4}
Let $w\in W_\mathrm{I}$ and let $x\in B$. Then, there exists a word $w'\in W_\mathrm{I}$ such that $wx \sim_{\bar R} w'$.
\end{lemma}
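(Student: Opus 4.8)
The plan is to verify that $W_\mathrm{I}=W_\mathrm{A}\cup W_{10}$ is closed under right multiplication by each generator, exactly as in the analogous Lemmas \ref{sfun}, \ref{sfun2} and \ref{PwEnd1}, splitting first according to whether $w\in W_\mathrm{A}$ or $w\in W_{10}$, and then according to $x\in B=\{a_0,b_0,e_1,c,d,z_1\}$. Throughout I work with the transformation represented by $w$, compute $wx$ concretely, read off the canonical form of $wx$ from the decomposition $\IEnd(S_n)=\PAut(S_n)\sqcup K_0$, and only then justify the rewriting using the relations of $\bar R$ together with Lemmas \ref{IEnd2} and \ref{IEnd1}.

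For $w\in W_\mathrm{A}$ and $x\in A$, the product $wx$ still lies in $\PAut(S_n)$, so since $\langle A\mid R\rangle$ presents $\PAut(S_n)$ and $W_\mathrm{A}$ is a set of canonical forms for it, there is $w'\in W_\mathrm{A}\subseteq W_\mathrm{I}$ with $wx\sim_R w'$, hence $wx\sim_{\bar R}w'$. The first genuine work is the subcase $x=c$: for $\alpha\in\PAut(S_n)$ one computes that $\dom(\alpha c)=\{x\in\dom(\alpha)\mid x\alpha\neq0\}$ and that $\alpha c$ sends the $\alpha$-preimage of $1$ to $0$ and fixes the other images, so generically $\alpha c\in K_0$. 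I would then extract from $\alpha$ the data $q=(i_0,i_1,\ldots,i_k\mid j_1,\ldots,j_k)\in Q$ for which $\alpha c$ equals the transformation represented by $dw_q'cw_q''$, and prove $w_\alpha c\sim_{\bar R}dw_q'cw_q''$ by commuting $c$ past the $\I_{n-2}^{0,1}$-part of $\alpha$ via Lemma \ref{IEnd2} and reorganising with $R_{20}$--$R_{26}$. The degenerate subcases --- $1\notin\im(\alpha)$, or $|\im(\alpha c)|\leqslant1$, or $0$ behaving specially --- land back in $\PAut(S_n)$ and are routed into $W_\mathrm{A}$ using $R_{24}$, $R_{25}$ and Lemma \ref{IEnd1}.

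The bulk of the proof is the case $w=dw_q'cw_q''\in W_{10}$, where $w$ represents $\transf{i_0&i_1&\cdots&i_k\\0&j_1&\cdots&j_k}\in K_0$. For each $x$ I compute $wx$ as a transformation and identify its canonical form. Multiplication by $a_0$ or $b_0$ permutes the images and is handled by pushing the permutation word through $c$ using $R_{26}$ and Lemma \ref{IEnd2}, after renormalising $q$ so that the $j$'s increase; multiplication by $e_1$ may delete the preimage of $n-1$ and so lowers the rank; multiplication by $d$ deletes $i_0$ from the domain, so $wd\in\PAut(S_n)$ and reduces into $W_\mathrm{A}$ (using $w_q''d\sim dw_q''$ in $\PAut(S_n)$ together with $R_{25}$); multiplication by $z_1$ must move $0$ and is the most delicate to normalise; and multiplication by $c$ bifurcates according to whether $1\in\{j_1,\ldots,j_k\}$, yielding either a smaller $K_0$-element (a new $q'\in Q$) or an element of $\PAut(S_n)$.

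I expect the main obstacle to be the two $x=c$ computations, because a second occurrence of $c$ must be reduced against the first: the suffix $w_q''$ need not lie in $\I_{n-2}^{0,1}$ (it may send $j_q$ to $1$ when $j_1=1$), so Lemma \ref{IEnd2} does not apply directly and one must instead unravel $cw_q''c$ through $R_{25}$ and Lemma \ref{IEnd1} to create the reducible block $b_0^{n-2}e_1b_0c\sim c^2$. Closely related is the bookkeeping needed to guarantee that every output sequence meets the normalisation defining $Q$ (distinct $i$'s, strictly increasing $j$'s, and $j_q=\min(\Omega_{n-1}\setminus\{1,j_2,\ldots,j_k\})$), and that products of rank $\leqslant1$ are correctly diverted into $W_\mathrm{A}$ rather than $W_{10}$. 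Once these are settled, the remaining cases are routine verifications of the type already carried out in the earlier sections.
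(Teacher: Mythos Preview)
Your proposal is correct and follows essentially the same route as the paper: the same split into $w\in W_\mathrm{A}$ versus $w\in W_{10}$, the same case analysis over $x\in B$, and the same mechanism of commuting $c$ past $\I_{n-2}^{0,1}$-words via Lemma~\ref{IEnd2} together with $R_{20}$--$R_{26}$ and Lemma~\ref{IEnd1} for the reductions involving $c$. Your identification of the two $x=c$ computations and the $Q$-normalisation bookkeeping as the delicate points is exactly right; indeed your explicit remark that rank-$\leqslant 1$ products must be diverted into $W_\mathrm{A}$ rather than $W_{10}$ is a point the paper handles only implicitly.
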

\begin{proof}
In this proof, for $0\leqslant i,j\leqslant n-1$, we denote by $(i\;j)$ the transposition of $\Omega_{n-1}^0$ that applies $i$ into $j$.
Also, the congruence $\sim_{\bar{R}}$ is denoted simply by $\sim$.

\smallskip

First, let us suppose that $w\in W_\mathrm{A}$ and $x\in A$.
Then, there exists $w'\in W_\mathrm{A}\subseteq W_\mathrm{I}$ such that $wx \sim w'$.
Thus, suppose that $x=c$.
We will consider six cases for $w$ (as transformation).

\smallskip

\noindent\textsc{case 1.} $0 \in \dom(w)$, $0w=0$, $\Omega_{n-1}w\subseteq \Omega_{n-1}$ and $1\in \im(w)$.
If $\im(w)=\{0,1\}$, then $w=b_0^iz_1^2$, for some $0\leqslant i\leqslant n-2$, whence
$$
wc\sim b_0^iz_1^2 c \sim b_0^i z_1^2 b_0^{n-2}e_1b_0 \sim w',
$$
for some $w'\in W_A$.
So, suppose that $|\im(w)|=r\geqslant 3$ and let
$
w=\transf{
0&i_{0} & i_{1} & i_{2} & \cdots & i_{r-2} \\
0 &1 & j_{1} & j_{2} & \cdots & j_{r-2}
}
$,
with $j_{1} < j_{2} < \cdots < j_{r-2}$.
Take $q=(i_{0},i_{1},\ldots,i_{k}\mid j_{1},\ldots,j_{k})$. Then, $q\in Q$ and $j_1,j_q\geqslant 2$.
Now, observe that $w (j_1\; j_q) = \transf{
0&i_{0} & i_{1} & i_{2} & \cdots & i_{r-2} \\
0 &1 & j_{q} & j_{2} & \cdots & j_{r-2}
}$
and
$(j_1\; j_q) \id_{\im(w)}= \transf{
0 & 1 & j_{q} & j_{2} & \cdots & j_{r-2} \\
0 &1  & j_{1} & j_{2} & \cdots & j_{r-2}
}$,
whence
$w w_{(j_1\; j_q)}\sim w'_q$ and $b_0^{n-2}e_1b_0 w_{(j_1\; j_q)} w_{\im(w)} \sim w''_q$.
In addition, by Lemma \ref{IEnd2}, we have $w_{(j_1\; j_q)} w_{\im(w)} c \sim c w_{(j_1\; j_q)} w_{\im(w)}$.
Therefore,
\begin{align*}
wc \sim w w_{(j_1\; j_q)} w_{(j_1\; j_q)}  w_{\im(w)} c \sim
w w_{(j_1\; j_q)} c w_{(j_1\; j_q)}  w_{\im(w)}  \sim
w w_{(j_1\; j_q)} d c b_0^{n-2}e_1b_0 w_{(j_1\; j_q)} \sim  \qquad \\  \sim
dw w_{(j_1\; j_q)}  c b_0^{n-2}e_1b_0 w_{(j_1\; j_q)} \sim d w'_q c w''_q \in W_{10}.
\end{align*}

\smallskip

\noindent\textsc{case 2.} $0 \in \dom(w)$, $0w=0$, $\Omega_{n-1}w\subseteq \Omega_{n-1}$ and $1\notin \im(w)$. Using Lemma \ref{IEnd1}, we obtain
$$wc \sim wdc \sim dwc \sim dwb_{0}^{n-2}e_{1}b_{0}c \sim dwc^{2} \sim dwdb_{0}^{n-2}e_{1}b_{0} \sim w',$$
for some $w' \in W_\mathrm{A}$.

\smallskip

\noindent\textsc{case 3.} $0 \notin \dom(w)$ and $\Omega_{n-1}w\subseteq \Omega_{n-1}$, i.e. $0 \notin \im(w)$.
In this case, we have $w = d\zeta_w$, where $\zeta_w \in \PAut(S_n)$, $0 \in \dom(\zeta_w)$, $0\zeta_w = 0$ and $\Omega_{n-1}\zeta_w \subseteq \Omega_{n-1}$. If $1\in \im(\zeta_w)$, then from \textsc{case 1}, we obtain
$w_{\zeta_w} c \sim dw_q'cw_q'' \in W_{10}$,
for some $q\in Q$. Thus,
$$
wc \sim d w_{\zeta_w} c \sim d^2w_q'cw_q'' \sim dw_q'cw_q'' \in W_{10}.
$$
So, suppose that $1\notin \im(\zeta_w)$. Then, from \textsc{case 2}, we have that there exists $u \in W_\mathrm{A}$ such that $w_{\zeta_w} c \sim u$. Thus,
$$
wc \sim d w_{\zeta_w} c \sim du \sim w',
$$
for some $w' \in W_\mathrm{A}$.

\smallskip

\noindent\textsc{case 4.} $0 \notin \dom(w)$ and $\im(w)=\{0\}$. Here, we have $w \sim wz_{1}^{2}$ and thus
$$
wc \sim wz_{1}^{2}c \sim wz_{1}z_{1}^{2}b_{0}^{n-2}e_{1}b_{0} \sim w',
$$
for some $w' \in W_\mathrm{A}$.

\smallskip

\noindent\textsc{case 5.} $0 \in \dom(w)$, $0w=1$ and $\Omega_{n-1}w \subseteq \{0\}$. We have again $w \sim wz_{1}^{2}$ and so
$$wc \sim wz_{1}^{2}c \sim wz_{1}z_{1}^{2}b_{0}^{n-2}e_{1}b_{0} \sim w',$$
for some $w' \in W_\mathrm{A}$.

\smallskip

\noindent\textsc{case 6.} $0\in \dom(w)$, $0w > 1$ and $\Omega_{n-1}w \subseteq \{0\}$. From $w \sim wb_{0}^{n-2}e_{1}b_{0}$ and Lemma \ref{IEnd1}, we have
$$wc \sim wb_{0}^{n-2}e_{1}b_{0}c \sim wc^{2} \sim wdb_{0}^{n-2}e_{1}b_{0} \sim w',$$
for some $w' \in W_\mathrm{A}$.

\smallskip

Next, we suppose that $w\in W_{10}$ and consider each case for $x\in B$.
Take $q=(i_{0},i_{1},\ldots,i_{k}\mid j_{1},\ldots,j_{k})\in Q$ such that $w = dw_q'cw_q''$.
Observe that, as transformation, we have
$
w=\transf{
i_{0} & i_{1} & i_{2} & \cdots & i_{k} \\
0 & j_{1} & j_{2} & \cdots & j_{k}
}
$.

\smallskip

\noindent\textsc{case} $x=a_{0}$.

Suppose that $j_{1}=1$ and $j_{2}=2$. Then, $j_{q}>2$ and, clearly, $w_{q}''a_{0}\sim w_{(j_{2}\;j_{q})}w_{q}''$.
On the other hand, by Lemma \ref{IEnd2}, we have $c w_{(j_{2}\;j_{q})} \sim w_{(j_{2}\;j_{q})} c$.
Take $q'=(i_{0},i_{2},i_1,i_3,\ldots,i_{k}\mid j_{1},\ldots,j_{k})$. Then, $q'\in Q$, $j_{q'}=j_q$, $w''_{q'}=w''_q$ and $w'_q w_{(j_{2}\;j_{q})}\sim w'_{q'}$.
Thus, we get
$$
wa_0 = dw_{q}'cw_{q}''a_{0} \sim dw_{q}'cw_{(j_{2}\;j_{q})}w_{q}'' \sim dw_{q}'w_{(j_{2}\;j_{q})}cw_{q}'' \sim dw'_{q'}cw''_{q'}\in W_{10}.
$$

Now, suppose that $j_{1}=1$ and $j_{2}>2$. Take $q'=(i_{0},i_1,i_2,\ldots,i_{k}\mid 2,j_{2},\ldots,j_{k})$.
Then, $q'\in Q$, $j_{q'}=j_q=2$, $w'_{q'}=w'_q$ and $w''_qa_0 \sim w''_{q'}$. Thus, we obtain
$$
wa_{0}=dw_{q}'cw_{q}''a_{0} \sim dw'_{q'}cw''_{q'}\in W_{10}.
$$

Next, suppose that $j_{1}=2$. Then, for $q'=(i_{0},i_1,i_2,\ldots,i_{k}\mid 1,j_{2},\ldots,j_{k})$,
we have $q'\in Q$, $j_{q'}=j_q=2$, $w'_{q'}=w'_q$ and $w''_qa_0 \sim w''_{q'}$, whence
$$
wa_{0}=dw_{q}'cw_{q}''a_{0} \sim dw'_{q'}cw''_{q'}\in W_{10}.
$$

Finally, suppose that $j_{1}>2$. Then, $w_{q}'' = w_{q}''a_{0}$ (as transformations) and thus
$$
wa_0 = dw_{q}'cw_{q}''a_{0} \sim dw_{q}'cw_{q}'' \in W_{10}.
$$

\smallskip

\noindent\textsc{case} $x=b_0$.

Let $\sigma = b_0a_0 = (2\;3\;\cdots\;n-1)\in\Sym(\Omega_{n-1}^0)$ and let $2\leqslant i<j\leqslant n-1$.
Clearly, $\sigma^{n-2}w_{q}''=w_{q}''$ (as transformations) and $(i\;j)^{2}\sigma =\sigma$. Moreover, by Lemma \ref{IEnd2},
we have $cw_{\sigma}\sim w_{\sigma}c$ as well as $cw_{(i\;j)}\sim w_{(i\;j)}c$.

Suppose that $j_{1}=1$ and $j_{k}=n-1$.
Then, $j_{q} < n-1$. Take $q'=(i_{0},i_{k},i_{1},\ldots,i_{k-1}\mid 1,2,j_{2}+1,\ldots,j_{k-1}+1)$.
Hence, $q'\in Q$, $j_{q'}=j_q+1$, $w_{q}'w_{\sigma}w_{(2\;j_{q}+1)}\sim w'_{q'}$
and $w_{(2\;j_{q}+1)}w_{\sigma}^{n-3}w_{q}''b_{0} \sim w''_{q'}$.
Thus, we obtain
\begin{align*}
wb_{0} = dw_{q}'cw_{q}''b_{0}\sim dw_{q}'cw_{\sigma}^{n-2}w_{q}''b_{0}\sim dw_{q}'w_{\sigma}cw_{(2\;j_{q}+1)}^{2}w_{\sigma}^{n-3}w_{q}''b_{0}
\sim  \qquad \\  \sim dw_{q}'w_{\sigma}w_{(2\;j_{q}+1)}cw_{(2\;j_{q}+1)}w_{\sigma}^{n-3}w_{q}''b_{0} \sim dw_{q'}'cw_{q'}''\in W_{10}.
\end{align*}

Suppose that $j_{1}>1$ and $j_{k}=n-1$. Then, $j_{q}=2$. Take $q'=(i_{0},i_{k},i_{1},\ldots,i_{k-1}\mid 1,2,j_{2}+1,\ldots,j_{k-1}+1)$.
Then, $q'\in Q$, $j_{q'}=3$, $w_{q}'w_{\sigma}w_{(j_{1}+1\;3)} \sim w'_{q'}$ and $w_{(j_{1}+1\;3)}w_{\sigma}^{n-3}w_{q}''b_{0} \sim w''_{q'}$.
Thus, we get
\begin{align*}
wb_{0} = dw_{q}'cw_{q}''b_{0} \sim dw_{q}'cw_{\sigma}^{n-2}w_{q}''b_{0} \sim dw_{q}'w_{\sigma}cw_{(j_{1}+1\;3)}^{2}w_{\sigma}^{n-3}w_{q}''b_{0}
\sim  \qquad \\  \sim  dw_{q}'w_{\sigma}w_{(j_{1}+1\;3)}cw_{(j_{1}+1\;3)}w_{\sigma}^{n-3}w_{q}''b_{0} \sim dw_{q'}'cw_{q'}''\in W_{10}.
\end{align*}

Finally, suppose that $j_{k}<n-1$. Then, being $q'=(i_{0},i_{1},\ldots,i_{k}\mid j_{1}+1,\ldots,j_{k}+1)$, we have $q'\in Q$, $j_{q'}=j_q+1$,
$w_{q}'w_{\sigma}w_{(2\;j_{q}+1)}\sim w'_{q'}$ and $w_{(2\;j_{q}+1)}w_{\sigma}^{n-3}w_{q}''b_{0} \sim w''_{q'}$, whence
\begin{align*}
wb_{0} = dw_{q}'cw_{q}''b_{0} \sim dw_{q}'cw_{\sigma}^{n-2}w_{q}''b_{0} \sim dw_{q}'w_{\sigma}cw_{(2\;j_{q}+1)}^{2}w_{\sigma}^{n-3}w_{q}''b_{0}
\sim  \qquad \\  \sim  dw_{q}'w_{\sigma}w_{(2\;j_{q}+1)}cw_{(2\;j_{q}+1)}w_{\sigma}^{n-3}w_{q}''b_{0} \sim dw_{q'}'cw_{q'}'' \in W_{10}.
\end{align*}

\smallskip

\noindent\textsc{case} $x=e_1$.

Suppose that $k=1$ and $j_{1}=n-1$. Then, $j_{q}=2$ and $w_{q}''e_{1}=z_{1}^{2}w_{q}''$ (as transformations).
Hence,
$$
we_{1}=dw_{q}'cw_{q}''e_{1} \sim dw_{q}'cz_{1}^{2}w_{q}'' \sim dw_{q}'z_{1}^{2}dz_{1}w_{q}'' \sim w',
$$
for some $w' \in W_\mathrm{A}$.

Next, suppose that $k>1$ and $j_{k}=n-1$.
Then, $w_{q}''e_{1}=e_{1}w_{q}''=e_1^2w_q''$ (as transformations).
Let us take $q'=(i_{0},i_{1},\ldots ,i_{k-1}\mid j_{1},\ldots ,j_{k-1})$.
Then, $q'\in Q$, $j_{q'}=j_q$, $w_{q}'e_{1} \sim w_{q'}'$ and $e_{1}w_{q}'' \sim w_{q'}''$. Thus, we obtain
$$
we_{1}=dw_{q}'cw_{q}''e_{1} \sim dw_{q}'ce_{1}^{2}w_{q}'' \sim dw_{q}'e_{1}ce_{1}w_{q}'' \sim dw_{q'}'cw_{q'}'' \in W_{10}.
$$

Finally, suppose that $j_{k}<n-1$. Then, $w_{q}''=w_{q}''e_{1}$ (as transformations). So, we have
$$
we_{1}=dw_{q}'cw_{q}''e_{1} \sim dw_{q}'cw_{q}'' \in W_{10}.
$$

\smallskip

\noindent\textsc{case} $x=c$.

Obviously, $w_{q}''d=dw_{q}''$ as well as $w_{q}''(2\;j_{2})^{2}=w_{q}''$ (as transformations).
Moreover,  by Lemma \ref{IEnd2}, we have $w_{(2\;j_{2})}c\sim cw_{(2\;j_{2})}$.

Suppose that $j_{1}=1$ and take $q'=(i_{1},\ldots,i_{k}\mid j_{2},\ldots,j_{k})$.
Then, $q'\in Q$, $j_{q'}=2$, $w_{q}'db_{0}^{n-2}e_{1}b_{0}w_{q}''w_{(2\;j_{2})} \sim w_{q'}'$ and $w_{(2\;j_{2})}b_{0}^{n-2}e_{1}b_{0} \sim w_{q'}''$.
Thus, we get
\begin{align*}
wc = dw_{q}'cw_{q}''c \sim dw_{q}'cw_{q}''dc \sim dw_{q}'cdw_{q}''c \sim dw_{q}'db_{0}^{n-2}e_{1}b_{0}w_{q}''c \sim dw_{q}'db_{0}^{n-2}e_{1}b_{0}w_{q}''cb_{0}^{n-2}e_{1}b_{0} \sim  \qquad \\  \sim
dw_{q}'db_{0}^{n-2}e_{1}b_{0}w_{q}''w_{(2\;j_{2})}^{2}cb_{0}^{n-2}e_{1}b_{0} \sim dw_{q}'db_{0}^{n-2}e_{1}b_{0}w_{q}''w_{(2\;j_{2})}cw_{(2\;j_{2})}b_{0}^{n-2}e_{1}b_{0} \sim dw_{q'}'cw_{q'}'' \in W_{10}.
\end{align*}

Now, suppose that $j_{1}>1$. Then, $w_{q}''c=dw_{q}''$ (as transformations) and so
$$
wc=dw_{q}'cw_{q}''c \sim dw_{q}'cdw_{q}'' \sim dw_{q}'db_{0}^{n-2}e_{1}b_{0}w_{q}'' \sim w',
$$
for some $w' \in W_\mathrm{A}$.

\smallskip

\noindent\textsc{case} $x=d$.

Since $w_{q}''d=dw_{q}''$ (as transformations), we have
$$
wd = dw_{q}'cw_{q}''d \sim dw_{q}'cdw_{q}'' \sim dw_{q}'db_0^{n-2}e_1b_0w_{q}'' \sim w',
$$
for some $w' \in W_\mathrm{A}$.

\smallskip

\noindent\textsc{case} $x=z_1$.

Suppose that $1\notin \im(w_{q}'')$. Then, $w_{q}''z_{1}=z_{1}d$ (as transformations) and so
$$
wz_1 = dw_{q}'cw_{q}''z_{1} \sim dw_{q}'cz_{1}d \sim dw_{q}'z_{1}^{2}d^2 \sim w',
$$
for some $w' \in W_\mathrm{A}$.

Now, notice that $\id_{{\{0,1,2\}}}a_{0}z_{1}^{2}=a_{0}z_{1}^{2}$ and, by Lemma \ref{IEnd2},
we have $w_{(2\;j_{q})}c\sim cw_{(2\;j_{q})}$ and $w_{{\{0,1,2\}}}c \sim cw_{{\{0,1,2\}}}$.

Suppose that $1\in \im(w_{q}'')$ and $j_{q}=2$. Then, $w_{q}''z_{1}=a_{0}z_{1}$ (as transformations).
Take $q'=(i_{1},i_{0}\mid 1)$. Then, $q'\in Q$, $j_{q'}=2$, $w_{q}'a_{0}w_{\{0,1,2\}} \sim w_{q'}'$ and $a_{0}z_{1}^{2} \sim w_{q'}''$.
Thus, we obtain
$$
wz_1 = dw_{q}'cw_{q}''z_{1} \sim dw_{q}'ca_{0}z_{1} \sim dw_{q}'a_{0}ca_{0}z_{1}^{2} \sim dw_{q}'a_{0}cw_{\{0,1,2\}}a_{0}z_{1}^{2} \sim dw_{q}'a_{0}w_{\{0,1,2\}}ca_{0}z_{1}^{2} \sim dw_{q'}'cw_{q'}'' \in W_{10}.
$$

Finally, suppose that $1\in \im(w_{q}'')$ and $j_{q}>2$. Then, $w_{q}''z_{1}=w_{(2\;j_{q})}a_{0}z_1$ (as transformations).
Let us take $q'=(i_{1},i_{0}\mid 1)$. Hence, $q'\in Q$, $j_{q'}=2$, $w_{q}'w_{(2\;j_{q})}a_{0}w_{\{0,1,2\}} \sim w_{q'}'$ and $a_{0}z_{1}^{2} \sim w_{q'}''$.
Thus, we have
\begin{align*}
wz_1 = dw_{q}'cw_{q}''z_{1} \sim dw_{q}'cw_{(2\;j_{q})}a_{0}z_{1} \sim dw_{q}'w_{(2\;j_{q})}ca_{0}z_{1} \sim dw_{q}'w_{(2\;j_{q})}a_{0}ca_{0}z_{1}^{2}
\sim  \qquad \\  \sim
dw_{q}'w_{(2\;j_{q})}a_{0}cw_{\{0,1,2\}}a_{0}z_{1}^{2} \sim dw_{q}'w_{(2\;j_{q})}a_{0}w_{\{0,1,2\}}ca_{0}z_{1}^{2} \sim dw_{q'}'cw_{q'}'' \in W_{10}.
\end{align*}

This last case completes the proof.
\end{proof}

\color{black}

Since all $3n+19$ relations of $\bar{R}$ are satisfied by the generators $a_0,b_0,e_1,c,d,z_1$ of $\IEnd(S_n)$,
Lemmas \ref{IEnd3} and \ref{IEnd4} allow us to conclude the following result.

\begin{theorem}\label{presIE}
The presentation $\langle B\mid \bar R\rangle$ defines the monoid $\IEnd(S_n)$.
\end{theorem}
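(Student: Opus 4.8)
The plan is to apply the Guess and Prove method of Theorem~\ref{ruskuc} with $M=\IEnd(S_n)$, generating set $X=B$, relation set $R=\bar R$, and candidate set of canonical forms $W=W_\mathrm{I}=W_\mathrm{A}\cup W_{10}$. Three conditions must be checked. For Condition~1, that the generators of $\IEnd(S_n)$ satisfy every relation of $\bar R$: the relations inherited from $R$ hold because they present $\PAut(S_n)$ (Fernandes--Paulista), and the ten additional relations $R_{20}$--$R_{26}$ have already been verified directly on the concrete transformations.

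For Condition~2, I would invoke the sufficient criterion stated immediately after Theorem~\ref{ruskuc}. The empty word represents the identity of $\PAut(S_n)$, so it may be taken to lie in $W_\mathrm{A}\subseteq W_\mathrm{I}$, and Lemma~\ref{IEnd4} guarantees that for every $w\in W_\mathrm{I}$ and every generator $x\in B$ there is a word $w'\in W_\mathrm{I}$ with $wx\sim_{\bar R}w'$. By that criterion, this immediately yields that each word over $B$ is $\sim_{\bar R}$-equivalent to some element of $W_\mathrm{I}$, which is exactly Condition~2. For Condition~3, Lemma~\ref{IEnd3} supplies the even stronger equality $|W_\mathrm{I}|=|\IEnd(S_n)|$, so in particular $|W_\mathrm{I}|\le|\IEnd(S_n)|$. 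With all three conditions verified, Theorem~\ref{ruskuc} yields that $\langle B\mid\bar R\rangle$ defines $\IEnd(S_n)$.

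The entire difficulty has been shifted into the two supporting lemmas, so the theorem itself is a short assembly. The genuine obstacle is Lemma~\ref{IEnd4}: one must show the spanning set is closed under right multiplication by each generator, and the delicate part is handling the words of $W_{10}$, which encode the non-automorphism part $K_0$ of $\IEnd(S_n)$. There the effect of $c$ (together with $d$) has to be tracked through the factorizations $dw_q'cw_q''$, using the commutation Lemma~\ref{IEnd2} for words fixing $0$ and $1$, the identity of Lemma~\ref{IEnd1}, and careful bookkeeping of how the index data $q=(i_0,\ldots,i_k\mid j_1,\ldots,j_k)$ transforms under each generator. The counting Lemma~\ref{IEnd3}, resting on the bijection between $K_0$ and the index set $Q$ parametrizing $W_{10}$, is comparatively routine once that parametrization is in place.
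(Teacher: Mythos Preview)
Your proposal is correct and follows the paper's own argument essentially verbatim: the paper simply observes that all relations of $\bar R$ are satisfied by the generators and then invokes Lemmas~\ref{IEnd3} and~\ref{IEnd4} to conclude via the Guess and Prove method of Theorem~\ref{ruskuc}. Your additional commentary on where the real work lies (Lemma~\ref{IEnd4}) is accurate and matches the structure of the paper.
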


\color{black}

\bigskip

\lastpage

\end{document}